\author{Skyler Marks}
	\newcommand{\xto}{\xrightarrow}
	\newcommand{\pd}[2][]{\frac{\partial #1}{\partial #2}}
	\DeclareMathOperator{\id}{Id}
	\DeclareMathOperator{\ord}{ord}
	\DeclareMathOperator{\opens}{\textbf{opens}}
	\DeclareMathOperator{\Ker}{Ker}
	\DeclareMathOperator{\im}{im}
	\DeclareMathOperator{\ima}{Im}
\def\tagform@#1{\maketag@@@{\ignorespaces(#1\unskip\@@italiccorr)}}
\newtheorem{theorem}{Theorem}[section]
\newtheorem{corollary}{Corollary}[theorem]
\newtheorem{lemma}[theorem]{Lemma}
\newtheorem{definition}{Definition}[section]
\theoremstyle{remark}
\newtheorem{remark}{Remark}[section]
\title{The Kodaira Embedding Theorem}
\author{Skyler Marks\thanks{Boston University}}
\begin{document}
\maketitle

\begin{abstract}
	Chow's Theorem and GAGA are renowned results demonstrating the algebraic
	nature of projective manifolds and, more broadly, projective analytic
	varieties. However, determining if a particular manifold is projective is
	not, generally, a simple task. The Kodaira Embedding Theorem provides an
	intrinsic characterization of projective varieties in terms of line bundles;
	in particular, it states that a manifold is projective if and only if it
	admits a positive line bundle. We prove only the `if' implication in this
	paper, giving a sufficient condition for a manifold bundle to be embedded in
	projective space. 

	Along the way, we prove several other interesting results. Of particular
	note is the Kodaira-Nakano
	Vanishing Theorem, a crucial tool for eliminating higher cohomology of
	complex	manifolds, as well as Lemmas \ref{pos} and \ref{pullback}, which provide
	important relationships between divisors, line bundles, and blowups. 

	Although this treatment is relatively self-contained, we omit a rigorous
	development of Hodge theory, some basic complex analysis results, and some theorems regarding Čech cohomology
	(including Leray's Theorem).

	Much of the development follows \autocite{GrHa}, supplementing with
	\autocite{Huy} regularly and drawing on other sources as needed.
\end{abstract}

\pagebreak
\tableofcontents
\section{Preliminaries}
\subsection{Complex Manifolds}
\begin{definition} 
	A \textbf{complex structure} on a manifold is a system of charts on $M$ to
	open balls in $\mathbb C^n$. 
\end{definition}
\begin{definition}
	The \textbf{complexified tangent bundle} $T_{\mathbb C}M$ of a manifold $M$
	is the set of linear functions $f:C^{\infty}(M,\mathbb C)\to
	C^\infty(M,\mathbb C)$ satisfying
	$f(g\cdot h) = hf(g)+gf(h)$. These functions are called
	\textbf{derivations}.
\end{definition}
\begin{definition}
	The \textbf{holomorphic part} $T_{\partial}$ of the complexified tangent space is the set
	of all derivations in $T_\mathbb CM$ which vanish on antiholomorphic
	functions (that is, functions $f$ with $\bar f$ holomorphic). The
	\textbf{antiholomorphic part} $T_{\bar\partial}$ of the complexified tangent space is the set
	of all derivations which vanish on holomorphic functions. 
\end{definition}
\begin{lemma}
	The complex structure on a complex manifold $M$ induces a decomposition on the complexified tangent
	bundle of
	the manifold $M$:
	\[T_\mathbb CM = T_\partial M \oplus T_{\bar\partial}M\]
\end{lemma}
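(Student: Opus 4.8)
The plan is to prove this pointwise on the fibers of the complexified tangent bundle, showing that at each point $p \in M$ we have a direct sum decomposition of vector spaces, and then verify that this decomposition is compatible with the bundle structure. First I would fix local holomorphic coordinates $z_1, \dots, z_n$ near $p$, writing $z_j = x_j + i y_j$, so that the underlying real manifold has coordinates $x_1, y_1, \dots, x_n, y_n$. The real tangent space $T_p M$ has basis $\partial/\partial x_j, \partial/\partial y_j$, and its complexification $T_{\mathbb C,p} M = T_p M \otimes_{\mathbb R} \mathbb C$ is spanned over $\mathbb C$ by the same vectors. The key move is to pass to the standard complex basis by defining the Wirtinger operators
\[
	\pd[]{z_j} = \frac{1}{2}\left(\pd[]{x_j} - i\,\pd[]{y_j}\right), \qquad \pd[]{\bar z_j} = \frac{1}{2}\left(\pd[]{x_j} + i\,\pd[]{y_j}\right).
\]
Since the change of basis between $\{\partial/\partial x_j, \partial/\partial y_j\}$ and $\{\partial/\partial z_j, \partial/\partial \bar z_j\}$ is invertible, these $2n$ derivations again form a $\mathbb C$-basis for $T_{\mathbb C,p}M$.

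Next I would identify the two summands. I claim $T_\partial M$ is spanned locally by the $\partial/\partial z_j$ and $T_{\bar\partial}M$ by the $\partial/\partial \bar z_j$. To see this, I would check the defining vanishing conditions from the previous definition: a function $\bar f$ is holomorphic precisely when $f$ satisfies the conjugate Cauchy--Riemann equations, equivalently $\partial f / \partial z_j = 0$ for all $j$, so the operators $\partial/\partial \bar z_j$ annihilate all holomorphic functions and hence lie in $T_{\bar\partial}M$; symmetrically the $\partial/\partial z_j$ annihilate all antiholomorphic functions and lie in $T_\partial M$. Conversely, writing an arbitrary derivation as a $\mathbb C$-linear combination $\sum_j a_j \, \partial/\partial z_j + b_j\, \partial/\partial \bar z_j$ and applying it to the coordinate functions $z_k$ (which are holomorphic) and $\bar z_k$ (antiholomorphic) would show that membership in $T_\partial M$ forces all $b_j = 0$ and membership in $T_{\bar\partial}M$ forces all $a_j = 0$. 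This gives both $T_\partial M \cap T_{\bar\partial}M = 0$ and $T_\partial M + T_{\bar\partial}M = T_{\mathbb C}M$ on the chart.

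Having established the decomposition fiberwise on one chart, the remaining step is to confirm it is well-defined globally, i.e.\ independent of the choice of holomorphic coordinates. Here I would invoke the fact that the transition maps between charts are biholomorphic: a holomorphic change of coordinates has a Jacobian that is $\mathbb C$-linear and mixes only the $\partial/\partial z_j$ among themselves (and likewise the $\partial/\partial \bar z_j$), because the mixed derivatives $\partial w_k/\partial \bar z_j$ vanish by the Cauchy--Riemann equations. Consequently the subbundles $T_\partial M$ and $T_{\bar\partial}M$ are each preserved under coordinate change and patch into globally well-defined sub\-bundles, and the splitting assembles into the asserted bundle decomposition.

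I expect the main obstacle to be the global well-definedness rather than the linear algebra on a single chart: one must take care that the characterization of $T_\partial M$ and $T_{\bar\partial}M$ via vanishing on (anti)holomorphic functions is genuinely coordinate-free, so that the locally defined summands agree on overlaps. The intrinsic definitions given above already phrase the summands without reference to coordinates, which is what makes the patching essentially automatic once the Cauchy--Riemann / biholomorphy observation is in hand; the bulk of the work is simply verifying that the coordinate vector fields realize these intrinsic spaces.
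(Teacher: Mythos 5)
Your proposal is correct and follows essentially the same route as the paper: a local frame $\left\{\pd{z_j}, \pd{\bar z_j}\right\}$ realizing the two summands, with global well-definedness coming from the holomorphy of the transition functions. You simply supply the details (the Wirtinger operators, the verification via coordinate functions, and the Cauchy--Riemann argument for patching) that the paper's terse proof leaves implicit.
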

\begin{proof}
	The local holomorphic coordinates $z_1,...,z_n$ induce a local frame for the
	tangent bundle:
	\[\left\{\pd{z_1}, ..., \pd{z_n}, \pd{\bar{z}_1}, ...,
	\pd{\bar{z}_n}\right\}\] 
	Of holomorphic and antiholomorphic derivations. Furthermore, note that since
	the transition functions are holomorphic, the decomposition remains well
	defined from chart to chart (even though the basis may change). 
\end{proof}
\begin{remark}
	This decomposition induces decompositions on the cotangent bundle and on
	each wedge product of the cotangent bundle, of the form 
	\[\bigwedge^k T^*M = \bigoplus_{p+q=k}
	\left[\bigwedge^pT^*_{\partial}M\right]\wedge\left[\bigwedge^qT^*_{\bar\partial}M\right]\]
	We let
	$\Omega^{(p,
	q)}(M)\coloneq\left[\bigwedge^pT^*_{\partial}M\right]\wedge\left[\bigwedge^qT^*_{\bar\partial}M\right]$,
	and call elements of $\Omega^{(p, q)}(M)$ \textbf{differential forms of type
	$(p, q)$}. Restricting the exterior differential to $\Omega^{p,q}$ yields a
	map who's image is contained in $\Omega^{(p+1, q)}\oplus\Omega^{(p, q+1)}$;
	projecting onto each of these factors yields operators which we call
	\textbf{Dolbeault operators}
	$\partial$ and $\bar\partial$. Note that $d = \partial + \bar\partial$
\end{remark}
\begin{remark}
	Complex conjugation acts naturally on the wedge products of the complexified cotangent
	bundle by the rule:
	\[\overline{\sum_{I, J}f_{I, J}dz_I\wedge dz_J} = \sum_{I, J}\bar f_{I,
	J}d\bar z_I\wedge d\bar z_J\]
	moreover, $\overline{\Omega^{(p, w)}} = \Omega^{(q, p)}$
\end{remark}
\begin{lemma}
	The above construction creates a double complex (see figure \ref{dolbcom}).
	\begin{figure}[h]\label{dolbcom}
	\label{fig}
	\centering
	\bigskip
	\centerline{
	\xymatrix{
			&
	 		...&
	 		...&
	 		...\\
	 		0\ar[r]^{\bar \partial}&
			\Omega^{0, 2}(X)\ar[u]^{\bar\partial}\ar[r]^{\partial}&
	 		\Omega^{1, 2}(X)\ar[u]^{\bar\partial}\ar[r]^{\partial}&
	 		\Omega^{2, 2}(X)\ar[u]^{\bar\partial}\ar[r]^{\partial}&
	 		...\\        
	 		0\ar[r]^{\bar \partial}&
			\Omega^{0, 1}(X)\ar[u]^{\bar\partial}\ar[r]^{\partial}&
	 		\Omega^{1, 1}(X)\ar[u]^{\bar\partial}\ar[r]^{\partial}&
	 		\Omega^{2, 1}(X)\ar[u]^{\bar\partial}\ar[r]^{\partial}&
	 		...\\        
	 		0\ar[r]^{\bar \partial}&
			\Omega^{0, 0}(X)\ar[u]^{\bar\partial}\ar[r]^{\partial}&
	 		\Omega^{1, 0}(X)\ar[u]^{\bar\partial}\ar[r]^{\partial}&
	 		\Omega^{2, 0}(X)\ar[u]^{\bar\partial}\ar[r]^{\partial}&
	 		...\\
	 		0					  &
	 		0\ar[u]^{\bar\partial}&
	 		0\ar[u]^{\bar\partial}&
	 		0\ar[u]^{\bar\partial}\\
	}}
	\caption{The Dolbeaut Double Complex}
	\end{figure}
\end{lemma}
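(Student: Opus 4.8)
The plan is to verify the three defining identities of a double complex directly from the single identity $d^2 = 0$, which the exterior derivative satisfies on any smooth manifold. To build a double complex out of the bigraded spaces $\Omega^{(p,q)}(X)$ equipped with the two families of arrows $\partial$ and $\bar\partial$, it suffices to establish
\[
\partial^2 = 0, \qquad \bar\partial^2 = 0, \qquad \partial\bar\partial + \bar\partial\partial = 0.
\]
The first two say that each row and each column of Figure \ref{dolbcom} is itself a cochain complex, while the third says that the squares in the diagram anticommute (which is the sign convention under which the total complex recovers $(\Omega^\bullet, d)$).

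First I would record the bidegrees of the operators. As observed in the preceding remark, $\partial$ raises type by $(1,0)$ and $\bar\partial$ raises type by $(0,1)$; hence, acting on a form of type $(p,q)$, the four composites $\partial^2$, $\partial\bar\partial$, $\bar\partial\partial$, and $\bar\partial^2$ have images in $\Omega^{(p+2,q)}$, $\Omega^{(p+1,q+1)}$, $\Omega^{(p+1,q+1)}$, and $\Omega^{(p,q+2)}$ respectively. These are the relevant summands of the type decomposition of $\bigwedge^{p+q+2}T^*X$ recorded earlier.

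Next, using $d = \partial + \bar\partial$ from the same remark, I would expand
\[
0 = d^2 = (\partial + \bar\partial)^2 = \partial^2 + \left(\partial\bar\partial + \bar\partial\partial\right) + \bar\partial^2,
\]
and then group the right-hand side by bidegree. Applied to a form of type $(p,q)$, the three grouped terms $\partial^2$, $\,\partial\bar\partial + \bar\partial\partial\,$, and $\bar\partial^2$ take values in the three \emph{distinct} summands $\Omega^{(p+2,q)}$, $\Omega^{(p+1,q+1)}$, and $\Omega^{(p,q+2)}$ of $\bigwedge^{p+q+2}T^*X$. Since a sum of elements lying in independent direct summands vanishes only when each summand vanishes, the single equation $d^2=0$ forces all three identities simultaneously.

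The whole argument is essentially bookkeeping of bidegrees, so the only point requiring genuine care is that $d$, $\partial$, and $\bar\partial$ are well defined independently of the chosen holomorphic chart — precisely what the earlier lemma and remark guarantee, since the transition functions are holomorphic and therefore preserve the type decomposition. Granting that, I expect no real obstacle: the content of the lemma is entirely contained in the nilpotence of $d$ together with the uniqueness of the $(p,q)$-decomposition.
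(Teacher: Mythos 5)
Your proposal is correct and follows essentially the same route as the paper: expand $d^2 = (\partial + \bar\partial)^2 = 0$, sort the three resulting terms by bidegree into the distinct summands $\Omega^{(p+2,q)}$, $\Omega^{(p+1,q+1)}$, $\Omega^{(p,q+2)}$, and conclude that each must vanish separately. The only cosmetic difference is that the paper first writes out $\partial$ and $\bar\partial$ in local coordinates before making this bidegree argument, whereas you invoke the bidegrees directly from the preceding remark; the essential step is identical.
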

\begin{proof}
	In local coordinates for a differential form 
	\[\phi = \sum f_{IJ} dz_I\wedge d\bar z_J\]
	(where the cardinality of $I$ is $p$, the cardinality of $J$ is $q$, by
	$dz_I$ we mean an expression of the form $dz_{i_1}\wedge ... \wedge
	dz_{i_p}$, and by $f_{IJ}$ we mean a specific function for each choice of
	index in $I$ and in $J$), we can write
	\[\partial \phi = \sum_{i, I, J} \pd[f_{IJ}]{z_i}dz_i\wedge dz_I\wedge d\bar z_J\]
	And 
	\[\bar\partial \phi = \sum_{i, I, J} \pd[f_{IJ}]{\bar z_i}d\bar z_i\wedge dz_I\wedge d\bar z_J\]
	Note that $d = \partial + \bar\partial$ and the fact that the exterior
	derivative is linear implies $d^2 = \partial\partial +
	\partial\bar\partial + \bar\partial\partial + \bar\partial\bar\partial = 0$.
	But the images of $\partial\partial$, of $\partial\bar\partial +
	\bar\partial\partial$, and of $\bar\partial\bar\partial$ live in
	$\Omega^{(p+2, q)}$, $\Omega^{(p+1, q+1)}$, and $\Omega^{(p, q+2)}$,
	respectively, so this means $\partial\partial = \bar\partial\bar\partial =
	0$ and $\partial\bar\partial = -\bar\partial\partial$. In particular,
	$\ker(\partial)\subset\im(\partial)$, yielding a double complex. 
\end{proof}
\begin{lemma}[The Maximum Modulus Principle] 
	Let $f:U\subset \mathbb C^n\to \mathbb C$ be holomorphic and non-constant
	(for $U$ open). Then
	$|f(x)|$ does not attain a local maximum on $U$. 
\end{lemma}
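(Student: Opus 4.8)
The plan is to reduce the several-variable statement to the classical one-variable maximum modulus principle by restricting $f$ to complex lines, and then to prove the one-variable case from the mean value property supplied by Cauchy's integral formula. Suppose, toward a contradiction, that $|f|$ attains a local maximum at some point $p \in U$, so that there is a radius $r > 0$ with $\overline{B(p,r)} \subset U$ and $|f(x)| \le |f(p)|$ for every $x \in B(p,r)$.

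First I would fix an arbitrary unit vector $v \in \mathbb{C}^n$ and define the single-variable slice $g(\lambda) = f(p + \lambda v)$ for $\lambda \in \mathbb{C}$ with $|\lambda| < r$. Since $\lambda \mapsto p + \lambda v$ is affine and $f$ is holomorphic, $g$ is a holomorphic function of one complex variable, and by hypothesis $|g(\lambda)| \le |g(0)| = |f(p)|$ on the disk. The problem is thus reduced to showing that a one-variable holomorphic function whose modulus attains a maximum at an interior point of its domain is constant there.

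For the one-variable core I would invoke the mean value property: for $0 < \rho < r$, Cauchy's integral formula gives
\[
	g(0) = \frac{1}{2\pi} \int_0^{2\pi} g(\rho e^{i\theta})\, d\theta,
\]
whence
\[
	|g(0)| \le \frac{1}{2\pi} \int_0^{2\pi} \left|g(\rho e^{i\theta})\right| d\theta \le |g(0)|.
\]
Since the two ends agree, the middle inequality is an equality, and because $|g(\rho e^{i\theta})| \le |g(0)|$ pointwise, the continuous integrand must equal $|g(0)|$ identically on each such circle. Hence $|g|$ is constant on the disk. A holomorphic function of constant modulus is itself constant: if $|g| \equiv 0$ this is immediate, and otherwise $g$ is nonvanishing and $\bar g = |g|^2 / g$ is holomorphic, forcing $g$ to be simultaneously holomorphic and antiholomorphic and hence locally constant. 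Thus $g \equiv f(p)$, and as $v$ was arbitrary, $f$ is constant on all of $B(p,r)$.

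Finally I would propagate this local constancy to a contradiction: by the identity theorem, a holomorphic function constant on an open ball is constant on the connected component containing it, contradicting the assumption that $f$ is non-constant. The step I expect to be most delicate is not the reduction to one variable (a routine composition of holomorphic maps) but the extraction of constancy from the equality case of the integral inequality, where one must argue carefully that pointwise domination together with equal averages forces the integrand to be constant, and then that constant modulus upgrades to a constant function.
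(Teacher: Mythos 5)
The paper itself offers no proof to compare against: the remark following Hartogs' Theorem states that the proofs of both results are ``omitted for brevity.'' Your argument therefore fills a gap rather than parallels an existing proof, and it is essentially correct. The slicing reduction (restricting $f$ to complex lines $\lambda \mapsto p + \lambda v$ through the putative maximum) is the standard route from the one-variable maximum modulus principle to the several-variable statement, and your one-variable core is sound: Cauchy's formula gives the mean value property, the squeeze $|g(0)| \le \frac{1}{2\pi}\int_0^{2\pi}|g(\rho e^{i\theta})|\,d\theta \le |g(0)|$ forces the continuous, nonnegative integrand $|g(0)| - |g(\rho e^{i\theta})|$ to vanish identically on every circle, and constant modulus upgrades to constancy because $\bar g = |g|^2/g$ is then simultaneously holomorphic and antiholomorphic, hence locally constant. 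Two points deserve explicit mention. First, your closing contradiction needs $U$ connected: what you actually prove is that $f$ is constant on the connected component of $U$ containing $p$, which contradicts ``$f$ non-constant'' only when $U$ has a single component; indeed, as literally stated with $U$ merely open, the lemma is false (take $U$ disconnected, $f$ constant on one component and non-constant on another --- then $|f|$ attains many local maxima). This is a defect of the statement you inherited rather than of your argument, but your proof should flag the connectedness hypothesis it silently uses. Second, the identity theorem you invoke at the end is the several-variable version (a holomorphic function constant on a nonempty open subset of a connected open set is constant on the whole set); it is true, but its proof is not the one-variable isolated-zeros argument, so it merits a citation or a short justification.
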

\begin{corollary}
	Every holomorphic function on a compact manifold is constant. 
\end{corollary}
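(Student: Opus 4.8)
The plan is to combine the compactness of $M$ with the Maximum Modulus Principle via a connectedness argument. First I would observe that $|f|$, being a continuous function on the compact manifold $M$, attains its supremum at some point $p\in M$; write $c = f(p)$, so that $|c| = \max_{x\in M}|f(x)|$.

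Next I would localize. Around any point $x$ I can choose a holomorphic chart $\varphi : V \to U\sub \mathbb C^n$, and $f\circ\varphi^{-1}$ is then a holomorphic function on the open set $U$. If $|f|$ attains a local maximum at $x$, then $f\circ\varphi^{-1}$ attains a local maximum of its modulus, so the contrapositive of the Maximum Modulus Principle forces $f\circ\varphi^{-1}$, and hence $f$, to be constant on a neighborhood of $x$.

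With this tool in hand I would prove $f$ constant by a clopen argument. Define $A = \{x\in M : f(x) = c\}$. This set is nonempty (it contains $p$) and closed (by continuity of $f$). To see that it is open, take $x\in A$: then $|f(x)| = |c|$ is the global, hence local, maximum of $|f|$, so by the localization step $f$ is constant on a neighborhood of $x$, necessarily with value $f(x) = c$; that whole neighborhood therefore lies in $A$. Assuming $M$ is connected (otherwise one runs the argument on each connected component), a nonempty subset that is both open and closed must be all of $M$, so $f\equiv c$.

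The main obstacle, and really the only subtle point, is that the Maximum Modulus Principle as stated applies only to holomorphic functions on open subsets of $\mathbb C^n$, so the argument must be carried out in local charts and then patched globally; the patching is precisely what the clopen/connectedness step accomplishes. I would also flag explicitly the standing assumption that the manifolds under consideration are connected, since without it the conclusion reads only as ``constant on each connected component.''
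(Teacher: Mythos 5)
Your proof is correct, and it is exactly the argument the paper intends: the paper states this as a corollary of the Maximum Modulus Principle and explicitly omits the proof ``for brevity,'' so your compactness-plus-clopen argument fills that gap in the standard way. The two points you flag --- working in charts (which the paper takes to be balls, hence connected, so the contrapositive of the MMP applies cleanly) and the implicit connectedness of $M$ --- are precisely the right details to make explicit.
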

\begin{theorem}[Hartogs' Theorem]
Any holomorphic function on the compliment of an open disc extends to a function on the
	whole disk.	
\end{theorem}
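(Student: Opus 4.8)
The plan is to prove the theorem in its only nontrivial regime, where the ambient space is $\mathbb{C}^n$ with $n \geq 2$; in one variable the statement already fails, as $1/z$ on a punctured disc shows, so the argument must genuinely exploit the extra dimensions. Write $D \subset \mathbb{C}^n$ for the large (poly)disc and $B \subset D$ for the removed open disc, with $\overline{B} \subset D$, and let $f$ be holomorphic on $D \setminus \overline{B}$. The strategy is to manufacture a global holomorphic extension by first producing a smooth extension and then correcting its failure to be holomorphic with a compactly supported solution of the inhomogeneous $\overline{\partial}$ equation — precisely the sort of object the Dolbeault operators introduced above are designed to handle.

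First I would fix a cutoff $\chi \in C^\infty(D)$ with $\chi \equiv 1$ on a neighborhood of $\overline{B}$ and $\chi$ compactly supported in $D$. Then $f_0 := (1 - \chi)f$, extended by zero across $B$, is smooth on all of $D$ and agrees with $f$ away from $\operatorname{supp}\chi$. It is not holomorphic, but its defect is controlled: $\overline{\partial} f_0 = -f\,\overline{\partial}\chi =: g$ is a smooth $(0,1)$-form whose support lies in the transition region where $\chi$ is non-constant — a compact ``shell'' contained in $D \setminus \overline{B}$, so that $f$ is genuinely defined there — and $g$ is $\overline{\partial}$-closed since $\overline{\partial}^2 = 0$.

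Next I would solve $\overline{\partial}u = g$ with $u$ smooth, and the decisive issue — the main obstacle — is forcing $u$ to have compact support. Writing $g = \sum_j g_j\, d\overline{z}_j$ and extending $g$ by zero to all of $\mathbb{C}^n$, the explicit one-variable convolution
\[
u(z) = \frac{1}{2\pi i}\int_{\mathbb{C}} \frac{g_1(\zeta, z_2, \dots, z_n)}{\zeta - z_1}\, d\zeta \wedge d\overline{\zeta}
\]
furnishes a smooth solution, the closedness of $g$ ensuring that this single integral recovers all components of the equation. Now $u$ is holomorphic off the compact set $\operatorname{supp} g$, and for $(z_2,\dots,z_n)$ outside the projection of $\operatorname{supp} g$ the integrand vanishes identically, so $u = 0$ on an unbounded open set. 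Here $n \geq 2$ is essential: the complement $\mathbb{C}^n \setminus \operatorname{supp} g$ is connected, so the identity theorem propagates the vanishing and gives $u \equiv 0$ there, whence $\operatorname{supp} u \subseteq \operatorname{supp} g$ is compact and contained in the shell inside $D \setminus \overline{B}$.

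Finally, set $F := f_0 - u$. Then $\overline{\partial}F = g - g = 0$, so $F$ is holomorphic on all of $D$. Outside $\operatorname{supp}\chi \cup \operatorname{supp} u$ we have $\chi = 0$ and $u = 0$, hence $F = f$ there; since $F$ and $f$ are both holomorphic on the connected set $D \setminus \overline{B}$ and coincide on a nonempty open subset, analytic continuation forces $F = f$ throughout $D \setminus \overline{B}$. Thus $F$ is the sought extension. I expect the compact-support verification for $u$ to be the crux of the argument — it is exactly the step that collapses in dimension one — while the cutoff construction and the final identity-theorem matching are formal once the Dolbeault formalism and the Cauchy kernel are in hand.
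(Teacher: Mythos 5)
Your approach is the standard Ehrenpreis--H\"ormander $\overline{\partial}$-argument (the paper itself omits any proof of this theorem, so correctness is the only yardstick), and its skeleton --- cutoff, compactly supported $\overline{\partial}$-datum $g$, Cauchy transform in $z_1$, identity-theorem matching on the connected set $D\setminus\overline{B}$ --- is the right one. But the step you yourself flag as the crux contains a genuinely false claim: $\mathbb{C}^n\setminus\operatorname{supp} g$ is \emph{not} connected in this situation. By construction $\operatorname{supp} g\subseteq\operatorname{supp}(\overline{\partial}\chi)$, and this set is a shell separating the inner region where $\chi\equiv 1$ (a neighborhood of $\overline{B}$) from the outer region where $\chi\equiv 0$ (a neighborhood of $\partial D$ and everything beyond): for real-valued $\chi$, any open set on which $\overline{\partial}\chi$ vanishes is a set on which $\chi$ is locally constant, so every path from one region to the other meets $\operatorname{supp}(\overline{\partial}\chi)$; if $\chi$ is radial, the complement is literally an inner ball union an outer region. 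Consequently the identity theorem gives $u\equiv 0$ only on the \emph{unbounded} component of $\mathbb{C}^n\setminus\operatorname{supp} g$, and your conclusion $\operatorname{supp} u\subseteq\operatorname{supp} g$ is false. Worse, that conclusion would refute the theorem rather than prove it: on the neighborhood of $\overline{B}$ where $\chi\equiv 1$ we have $f_0=0$, so if also $u=0$ there, then $F=f_0-u$ vanishes on a nonempty open subset of $D$, hence $F\equiv 0$ on all of $D$ by the identity theorem, forcing $f\equiv 0$. The extension lives precisely in the hole --- $F=-u$ near $\overline{B}$ --- so $u$ must in general be nonzero exactly where you claim it vanishes.

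The repair is local, standard, and is all your final paragraph actually needs. The set where $(z_2,\dots,z_n)$ misses the projection of $\operatorname{supp} g$ lies in the unbounded component $U_\infty$ of $\mathbb{C}^n\setminus\operatorname{supp} g$, so the identity theorem gives $u\equiv 0$ on $U_\infty$. Since $\operatorname{supp} g$ is compactly contained in $D$ and the complement of the polydisc $D$ is connected and unbounded, $U_\infty$ contains everything outside $D$ together with a neighborhood of $\partial D$; hence $\operatorname{supp} u$ is still compact and contained in $D$, and $u=0$ near $\partial D$. There $\chi=0$ as well, so $F=f$ on a nonempty open subset of $D\setminus\overline{B}$, and analytic continuation on that connected set finishes the proof exactly as you wrote it. With this one correction --- ``$u$ vanishes on the unbounded component of the complement of $\operatorname{supp} g$'' in place of ``the complement is connected, so $u$ vanishes on all of it'' --- your proof is correct.
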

\begin{remark}
	The proof of the above theorems is omitted for brevity.
\end{remark}
\subsection{Vector Bundles}
\begin{definition}
	A vector bundle is \textbf{holomorphic} if it's transition functions are
	holomorphic. 
\end{definition}
	\begin{definition}
		A \textbf{Hermitian inner product} on a complex vector space $V$ is a
		function $\langle \bullet, \bullet\rangle : V\times V\to \mathbb C$
		which is distributive over addition in each component and satisfies:
		\begin{itemize}
			\item $\langle \lambda a, b\rangle = \lambda\langle a, b\rangle$
				(Linear in the first component)
			\item $\langle a,\lambda  b\rangle = \bar\lambda\langle a, b\rangle$
				(Antilinear in the second component)
			\item $\langle a, b\rangle = \overline{\langle b, a\rangle}$
				(Conjugate - Symmetric)
			\item $\langle a, a\rangle\begin{cases}
					>0 & a\neq 0\\ =0 &a = 0
			\end{cases} $ (Positive Definite)
		\end{itemize}
	\end{definition}
	\begin{remark}
		The last condition is defined, as $\langle a, a\rangle =
		\overline{\langle a, a\rangle}$ is real. 
	\end{remark}
	\begin{definition}
		A \textbf{metric} on a vector bundle $E\to M$ is a smooth function $H:
		E\times E\to \mathbb C$ which is a Hermitian inner product on each fiber. 
	\end{definition}
	\begin{definition}
		Let $E$ be a vector bundle on $M$. We define the \textbf{differential
		$k$-forms with values in $E$} to be smooth sections of the vector bundle
		tensored with the $k$-th exterior power of the tangent bundle:
		$\Omega_\mathbb C^k(E)\coloneq C^{\infty}(M, E\otimes
			\Omega_{\mathbb C}^k(M))$. We define the $0$-forms with values in
			$E$ to be sections of $E$. 
	\end{definition}
	\begin{definition}
		Similarly, we define \textbf{differential forms of type $(p, q)$} to be 
		$\Omega^{(p, q)}(E)\coloneq C^\infty(M, E\otimes \Omega^{(p, q)})$,
		where $(0, 0)$-forms are sections of $E$.
	\end{definition}
	\begin{remark}
		Note that in the case when $L=\mathbb C\otimes M$ is the trivial complex line
		bundle, we recover the original notions of differential forms and $(p,
		q)$ forms. Thus, it will suffice to state and prove theorems for forms
		valued in line or vector bundles; these then carry to standard forms. 
	\end{remark}
	\begin{remark}
		The Hermitian metric product on any vector bundle acts naturally on wedge
		products of that bundle. Moreover, if $E$ is a Hermitian bundle over a
		Hermitian manifold, there is a natural Hermitian metric on $\Omega^{(p,
		q)}(E)$. 
	\end{remark}
\section{A (Very) Brief Supplement on Sheaves}
\subsection{Basic Definitions}
	Let $X$ be a topological space. We define the $\opens(X)$ to be the
	category who's objects are open subsets of $X$ and who's morphisms are
	inclusions; i.e., for any pair of open sets $U\subset V$ of $X$, there is an
	arrow in $\opens(X)$ from $U$ to $V$.  
\begin{definition}
	A \textbf{presheaf} $\mathscr F$ on a topological space $X$ with values in
	an abelian category $\mathscr A$ is a contravariant functor $\mathscr
	F\colon\opens(X)\to A$. 
\end{definition}
We call elements of $\mathscr F(U)$ sections of $\mathscr F$ over $U$. We call
$\mathscr F(U\to V)$ for $U\to V$ an inclusion in $\opens(X)$ a restriction, and denote it
by $\bullet|_{U}$.
\begin{definition}
	A \textbf{sheaf} on a topological space $X$ is a presheaf on $X$ satisfying the following axioms:
	\begin{enumerate}
		\item For any open cover $\{U_\alpha\}$ of an open set $U$ of $X$, for
			any $f, g\in \mathscr F(U)$, if for each pair
			of intersections $U_\alpha\cap U_\beta$ we have
			$f|_{U_\alpha\cap U_\beta} = g|_{U_\alpha\cap U_\beta}$, then $f=g$.
		\item For any open cover $\{U_\alpha\}$ of an open set $U$ of $X$, for
			any collection $f_\alpha\in \mathscr F(U_\alpha)$, if for each pair
			of intersections $U_\alpha\cap U_\beta$ we have
			$f_\alpha|_{U_\alpha\cap U_\beta} = f_\beta|_{U_\alpha\cap
			U_\beta}$, then there is a $f\in \mathscr F(U)$ with $f|_{U_\alpha}
			= f_\alpha$ for each $\alpha$.
	\end{enumerate}
\end{definition}
\subsection{Čech Cohomology}
	Let $M$ be a manifold and $\underline U := \{U_{\alpha}\}$ a locally finite open cover
	of $M$. (In general, we'll be working with complex holomorphic manifolds, but I
	believe that much of this generalizes, perhaps even to topological manifolds or even general
	topological spaces). We'll define a chain complex:
\begin{align*}
	C^0(\underline U, \mathscr F) = &\bigoplus_{\alpha_1}\mathscr F(U_{\alpha_1})\\
	C^1(\underline U, \mathscr F) = &\bigoplus_{\alpha_1\neq\alpha_2}\mathscr
	F(U_{\alpha_1}\cap U_{\alpha_2})\\
	&\vdots\\ C^k(\underline U, \mathscr F) = &\bigoplus_{\alpha_1\neq...\neq\alpha_k}\mathscr
F(U_{\alpha_1}\cap \dots \cap U_{\alpha_k})
\end{align*}
	Where $\bigoplus$ is the direct sum of abelian groups.
	A $p$-cochain, or an element $\sigma$ of $C^p$, can be written as 
	\[\sum_{\alpha_1\neq ... \neq \alpha_{p+1}}\sigma_{\alpha_1, ..., \alpha_{p+1}}\]
	We define a cochain
	map $\delta^p:C^{p}(\underline U, \mathscr F) \to C^{p+1}(\underline U,
	\mathscr F)$ by the rule
		\begin{equation}\label{cechmap}
			\delta^p(\sigma) = \sum_{\alpha_1 \neq ...\neq
			\alpha_{p+1}}\sum_{j=0}^{p+1}(-1)^j\sigma_{\alpha_1, ...,
			\hat{\alpha_{j}}, ..., \alpha_{p+1}} |_{U_{\alpha_j}}
		\end{equation}
		Where the hat denotes omission, and the vertical bar denotes restriction (as usual).
	Note that $\delta^p\circ\delta^{p-1}$ can be written as 
		\begin{equation*}\label{cechmap}
			\delta^p\circ\delta^{p-1}(\sigma) = \sum_{\alpha_1 \neq ...\neq
			\alpha_{p+1}}\sum_{j=0}^{p+1}(-1)^{j}\left(\sum_{k=0}^{p}(-1)^k\sigma_{\alpha_1, ...,
			\hat{\alpha_{k}}, ..., \hat{\alpha_{j}}, ..., \alpha_{p+1}}
			|_{U_{\alpha_k}\cap U_{\alpha_j}}\right)
		\end{equation*}
		Where the $\hat\alpha_k$ term denotes the omission of the $k$-th $\alpha_i$
		\textit{after} the $j$-th $\alpha_j$ is omitted - that is, omitting the
		$\alpha_k$ if $k<j$ and omitting $\alpha_{k+1}$ if $k\geq j$.
		Note that both the term $\sigma_{\alpha_1, ..., \hat\alpha_j, ...,
		\hat\alpha_k,...,\alpha_{p+1}}|_{U_{\alpha_k}\cap U_{\alpha_j}}$ and
		$\sigma_{\alpha_1, ..., \hat\alpha_k, ...,
		\hat\alpha_j,...,\alpha_{p+1}}|_{U_{\alpha_k}\cap U_{\alpha_j}}$ appear in
		the sum, the first with $\alpha_j$ omitted by the first $\delta$, the second
		by the second - however, the signs will always be opposite, so every term
		cancels, leaving zero. From this we note that the coboundary map is in fact a
		coboundary. We'll often suppress the top index of $\delta^n$, especially
		when working over general $n$. 
	
		With this complex is thus associated a cohomology: 
		\[\check H^{n}(\underline U, \mathscr F) = \frac{\Ker(\delta^n)}{\ima(\delta^{n-1})}\]
		However, this cohomology depends on the choice of open cover. 
	We now seek to make formal the notion of subdividing an open cover further and
	further, and taking the cohomology of that subdivision. For a topological space
	$X$, form a directed system $\textbf{Ref}(X)$ who's objects
	are open covers of $X$ and where there is a morphism
	$\{U_\alpha\}\to\{U'_\beta\}$ if and only if $\{U'_\beta\}$ is a refinement of
	$\{U_\alpha\}$. Now taking $X$ to be a complex holomorphic manifold $M$, we
	define
\begin{definition}
	The \textbf{Čech Cohomology} $\check H^\bullet(M, \mathscr F)$ of a manifold
	$M$ with coefficients in a sheaf $\mathscr F$ is defined to be the direct 
	limit of $H(\underline U, \mathscr F)$ for $\underline U$ in the directed
	system 
	$\textbf{Ref}(X)$, where $\underline U \to \underline V$ in
	$\textbf{Ref}(X)$ induces a morphism between $H(\underline U,\mathscr F)\to
	H(\underline V, \mathscr F)$ by the naturality of restriction.
\end{definition} 
This definition is extremely hard to work with directly, so we establish some
results to ease the passage from Čech cohomology relative to an open cover to
true Čech cohomology:
\begin{definition}A \textbf{good cover} of a topological space is an open cover
	where each open set in the cover and each finite intersection of open sets
	in the cover is contractible. 
\end{definition}
\begin{lemma}\label{good}
	Every smooth (real) manifold admits a good cover.
\end{lemma}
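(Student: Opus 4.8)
The plan is to construct the good cover using Riemannian geometry, specifically geodesically convex neighborhoods. First I would equip $M$ with a Riemannian metric $g$: since a smooth manifold is paracompact, it admits partitions of unity, and patching together the Euclidean metrics pulled back from coordinate charts via such a partition produces a smooth metric on $M$. This reduces the purely topological statement to a metric one, where geodesics give us a supply of distinguished neighborhoods.

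Next I would recall the relevant notion and cite Whitehead's theorem. A subset $U \subseteq M$ is \emph{geodesically convex} if for every pair of points $p, q \in U$ there is a unique minimizing geodesic of $M$ joining them, and this geodesic lies entirely in $U$. Whitehead's theorem asserts that every point of $M$ admits a neighborhood basis of geodesically convex open sets; choosing one such set around each point therefore yields an open cover $\{U_\alpha\}$ of $M$ by geodesically convex sets. I would invoke this as a black box rather than reprove it.

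It then remains to verify the two defining properties of a good cover. For finite intersections: if $p, q \in U_{\alpha_1} \cap \dots \cap U_{\alpha_k}$, then the unique minimizing geodesic of $M$ from $p$ to $q$ lies in each $U_{\alpha_i}$ by convexity, hence in their intersection, so every finite intersection is again geodesically convex (and uniqueness is automatic, as it refers to geodesics of $M$, not of the subset). For contractibility: fixing a basepoint $p_0 \in U$ inside a convex set, the exponential map $\exp_{p_0}$ restricts to a diffeomorphism from $\exp_{p_0}^{-1}(U)$ onto $U$, and convexity forces $\exp_{p_0}^{-1}(U)$ to be star-shaped about the origin of $T_{p_0}M$ (each point $tv$, $t \in [0,1]$, maps to a point on the geodesic from $p_0$ to $\exp_{p_0}(v)$, which lies in $U$). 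The straight-line homotopy toward the origin then transports through $\exp_{p_0}$ to a deformation retraction of $U$ onto $p_0$, so $U$ is contractible.

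The main obstacle is Whitehead's theorem itself, that geodesically convex sets form a neighborhood basis. This is where the genuine work lies: one must choose neighborhoods small enough that $\exp_{p_0}$ is a diffeomorphism and that minimizing geodesics between nearby points stay trapped in small balls, which requires controlling the second fundamental form of geodesic spheres. I would leave this to the Riemannian geometry literature, since reproving it would carry us well away from the complex-geometric aim of the paper and everything else in the argument is elementary given it.
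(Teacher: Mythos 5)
Your proposal is correct, and it is the standard proof of this fact (essentially the argument in Bott and Tu's \emph{Differential Forms in Algebraic Topology}, Theorem 5.1): put a Riemannian metric on $M$ via a partition of unity, cover $M$ by geodesically convex neighborhoods using Whitehead's theorem, observe that finite intersections of such sets are again geodesically convex, and that convex sets are contractible. There is, however, nothing in the paper to compare it against: the paper never proves Lemma \ref{good} at all — it is stated without proof, in the same breath as the exactness of direct limits and Leray's theorem, with the reader deferred to the references — so your argument supplies an omitted proof rather than diverging from the paper's route. One step of your write-up should be tightened: $\exp_{p_0}^{-1}(U)$, read literally as the full preimage under the exponential map on its maximal domain, need not be star-shaped or even connected. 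What you want is that a Whitehead convex neighborhood $U$ may be taken inside a normal neighborhood of $p_0$, so that the set $W \subset T_{p_0}M$ of initial velocities $v$ of the minimizing geodesics from $p_0$ into $U$ (equivalently, the $v$ with $\exp_{p_0}(tv) \in U$ for all $t \in [0,1]$) is an open star-shaped set mapped diffeomorphically onto $U$ by $\exp_{p_0}$; the straight-line retraction of $W$ then transports to a contraction of $U$ exactly as you describe. With that adjustment, and granting Whitehead's theorem as a black box (a reasonable choice, consistent with how this paper treats its other foundational inputs), the proof is complete.
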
 %TODO
\begin{corollary}
	Every smooth (real) manifold equipped with an atlas admits a good cover
	which is a refinement of that atlas.
\end{corollary}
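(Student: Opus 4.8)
The plan is to revisit the construction underlying Lemma~\ref{good} rather than to invoke it as a black box: a refinement of a good cover need not itself be good, so one cannot simply refine the cover that lemma produces. Instead I would observe that the standard construction yields good covers at arbitrarily fine scales, and then arrange every member to lie inside a chart of the given atlas. The essential tool is Riemannian. I would fix a Riemannian metric $g$ on $M$ (which exists by a partition-of-unity argument) and work with geodesically convex open sets, where a set $U$ is \emph{geodesically convex} if any two of its points are joined by a unique minimizing geodesic of $M$ lying entirely in $U$.

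Two properties make such sets ideal building blocks. First, a geodesically convex set is contractible: fixing a basepoint $p_0 \in U$, the map sending $(x,t)$ to the point at parameter $t$ along the unique geodesic from $x$ to $p_0$ is a deformation retraction onto $p_0$, depending continuously on $x$ by smooth dependence of geodesics on their endpoints. Second, the intersection of geodesically convex sets is again geodesically convex (or empty): if $p$ and $q$ lie in every $U_i$, the geodesic joining them is unique in $M$ and lies in each $U_i$, hence in the intersection. Consequently every finite intersection of members of such a cover is either empty or contractible, which is precisely the good-cover condition.

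It remains to make the cover refine the atlas $\{V_i\}$. By Whitehead's theorem on the existence of convex neighborhoods, every point $p \in M$ possesses geodesically convex open neighborhoods of arbitrarily small diameter; choosing the neighborhood $U_p$ small enough that $U_p \subseteq V_{i(p)}$ for some chart $V_{i(p)}$ containing $p$, the collection $\{U_p\}_{p \in M}$ is an open cover, is a refinement of $\{V_i\}$ by construction, and is good by the two properties above.

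The main obstacle is the Riemannian input: establishing that arbitrarily small geodesically convex neighborhoods exist (Whitehead's lemma, typically proved by comparing the exponential map against the squared-distance function and controlling its Hessian near the diagonal) and that the intersection of convex sets remains convex. Once this geometric fact is in hand, the refinement step is essentially free, since geodesic convexity is a global notion insensitive to which chart a set happens to occupy, so shrinking neighborhoods to fit inside atlas members costs nothing.
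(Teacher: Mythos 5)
Your proposal is correct, and in fact it supplies an argument where the paper gives none: both Lemma \ref{good} and this corollary are stated in the paper without proof, so there is no ``paper route'' to compare against. What you have written is the classical argument (the one found in Bott and Tu): fix a Riemannian metric, invoke Whitehead's theorem to get arbitrarily small strongly convex neighborhoods, use uniqueness of minimizing geodesics to show that finite intersections of such sets are again convex (hence empty or contractible via the geodesic retraction), and shrink each neighborhood into a chart of the given atlas to obtain the refinement. Your key observation at the outset --- that one cannot simply refine the cover produced by Lemma \ref{good}, because a refinement of a good cover need not be good --- is exactly the point that makes the corollary nontrivial and forces the construction to be redone at small scale; this is handled correctly. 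Two minor remarks. First, the paper's definition of a good cover asks that finite intersections be contractible, so one should adopt (as you implicitly do) the convention that empty intersections are allowed. Second, the paper's \v{C}ech machinery works with locally finite covers, and the cover $\{U_p\}_{p\in M}$ you build is typically not locally finite; since manifolds are paracompact, one fixes this by passing to a locally finite refinement and then replacing each of its members by a convex set inside it contained in a chart, which preserves both goodness and the refinement property. Neither point affects the correctness of the proof of the statement as written.
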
 %TODO
We record the following lemma and theorem without proof; for more detailed treatment, see
\autocite{stacks}, \autocite{Wei}, or \autocite{GrHa}.
\begin{lemma}
	Direct limits in the category of $R$-modules for a ring $R$ are exact.
\end{lemma}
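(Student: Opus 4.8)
The plan is to work directly with the explicit model of the direct limit in $R\text{-Mod}$ and to extract from the directedness of the index system a single combinatorial fact that does all the real work. Recall that for a directed system $(M_i, \phi_{ij})$ one realizes $\varinjlim M_i = \left(\bigoplus_i M_i\right)/N$, where $N$ is the submodule generated by the differences $x - \phi_{ij}(x)$. The first thing I would isolate is the \emph{finite-detection lemma}: every element of $\varinjlim M_i$ is the image of some $x \in M_i$ for a single index $i$, and such an $x$ maps to $0$ in the limit if and only if $\phi_{ik}(x) = 0$ for some $k \geq i$. Both halves are immediate consequences of directedness, since any finite set of indices has a common upper bound; this is precisely the hypothesis that separates a directed system from an arbitrary diagram, and it is where all the content sits.

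With this in hand I would phrase the assertion concretely: for any directed system of short exact sequences
\[0 \to A_i \xto{f_i} B_i \xto{g_i} C_i \to 0\]
compatible with the transition maps, the induced sequence
\[0 \to \varinjlim A_i \xto{f} \varinjlim B_i \xto{g} \varinjlim C_i \to 0\]
is again short exact. If one is willing to invoke that $\varinjlim$ is left adjoint to the constant-diagram functor, then it is formally right exact, which already yields surjectivity of $g$ and exactness at $\varinjlim B_i$; the only genuinely new point is injectivity of $f$, i.e.\ left exactness, and this is exactly the property that directed colimits enjoy but general colimits do not. To keep the argument self-contained, however, I would simply verify all three statements by hand from the finite-detection lemma.

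For injectivity, take $a \in \varinjlim A_i$ with $f(a) = 0$, represent $a$ by some $a_i \in A_i$, and apply finite detection in $\varinjlim B_i$ to $f_i(a_i)$: there is $k \geq i$ with $\phi^B_{ik}(f_i(a_i)) = 0$. The compatibility square $\phi^B_{ik}\circ f_i = f_k \circ \phi^A_{ik}$ then gives $f_k(\phi^A_{ik}(a_i)) = 0$, and injectivity of $f_k$ forces $\phi^A_{ik}(a_i) = 0$, so $a = 0$ already in $\varinjlim A_i$. Surjectivity of $g$ is an immediate lift at a single stage and needs no directedness. For exactness at the middle, take $b$ represented by $b_i$ with $g(b) = 0$; finite detection produces $k \geq i$ with $g_k(\phi^B_{ik}(b_i)) = 0$, whence exactness of the $k$-th row gives $\phi^B_{ik}(b_i) = f_k(a_k)$ for some $a_k \in A_k$, exhibiting $b$ in the image of $f$.

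The main obstacle is thus entirely concentrated in the finite-detection lemma: once vanishing in the limit is known to be witnessed at a finite stage, the three exactness checks reduce to routine chases through the compatibility squares. I would therefore invest the bulk of the write-up in establishing that lemma carefully, and I would close with the remark that the argument breaks down for non-directed colimits precisely because common upper bounds may fail to exist — which is why arbitrary colimits (and dually, limits) are only one-sidedly exact, whereas the directed hypothesis buys the missing left-exactness.
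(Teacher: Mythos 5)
The paper does not actually prove this lemma; it is recorded explicitly without proof, with the reader referred to the Stacks Project, Weibel, and Griffiths--Harris, so there is no internal argument to compare yours against. Your proposal is correct, and it is essentially the standard proof found in those references: realize $\varinjlim M_i$ as $\bigl(\bigoplus_i M_i\bigr)/N$, isolate the finite-stage detection lemma (every element of the colimit is represented at a single index, and vanishing in the colimit is witnessed by some transition map $\phi_{ik}$), and then run the three diagram chases through the compatibility squares. You are also right that the detection lemma is where all the content lives and deserves the careful write-up, since showing that membership in $N$ forces $\phi_{ik}(x)=0$ for some $k\geq i$ is a genuinely fiddly bookkeeping induction, not an immediate triviality. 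Two small points to tighten. First, your middle-exactness check establishes only $\ker g \subseteq \im f$; the reverse inclusion $\im f \subseteq \ker g$ is also part of exactness, though it follows at once from $g_k \circ f_k = 0$ at each stage and passage to the colimit. Second, your surjectivity argument does quietly use directedness, namely in representing an element of $\varinjlim C_i$ by an element $c_i$ at a single stage before lifting it; what is true without directedness is the abstract right-exactness coming from the adjunction you mention, so the phrasing ``needs no directedness'' should be attributed to that formal argument rather than to the concrete lift.
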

\begin{theorem}[Leray]
	If $\underline U$ is a good cover, than $\check H^n(\underline U, \mathscr F) =
	\check H^n(M, \mathscr F)$.
\end{theorem}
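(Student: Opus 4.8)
\section*{Proof Proposal}

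The plan is to compare the Čech cohomology of the fixed cover $\underline U$ with the derived-functor (sheaf) cohomology $H^\bullet(M, \mathscr F)$ by means of a double complex, and then to identify the latter with the direct limit $\check H^\bullet(M, \mathscr F)$. First I would choose an injective resolution $0 \to \mathscr F \to \mathscr I^0 \to \mathscr I^1 \to \cdots$ (injective sheaves are in particular flasque, hence $\Gamma$-acyclic, which is all I will use) and form the first-quadrant double complex
\[ K^{p, q} = C^p(\underline U, \mathscr I^q), \]
equipped with the Čech differential $\delta$ raising $p$ and the resolution differential $d$ raising $q$. Everything then reduces to computing the cohomology of the associated total complex in the two available orders.

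Computing vertical cohomology first: for fixed $p$, the column $K^{p, \bullet}$ is a direct sum over the finite intersections $V = U_{\alpha_0} \cap \cdots \cap U_{\alpha_p}$ of the complexes $\mathscr I^\bullet(V)$. Since $\mathscr I^\bullet|_V$ is a flasque resolution of $\mathscr F|_V$, its cohomology computes the sheaf cohomology $H^q(V, \mathscr F)$. The property of the cover the argument really needs is that each such finite intersection is acyclic for $\mathscr F$, i.e. $H^q(V, \mathscr F) = 0$ for $q > 0$; contractibility of the intersections supplies this in the cases of interest. Hence the vertical cohomology is concentrated in the row $q = 0$, where it is exactly the Čech complex $C^\bullet(\underline U, \mathscr F)$, and taking the remaining horizontal cohomology yields $\check H^p(\underline U, \mathscr F)$.

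Computing horizontal cohomology first: for fixed $q$ I would invoke the fact that a flasque sheaf $\mathscr G$ has vanishing Čech cohomology in positive degree, $\check H^p(\underline U, \mathscr G) = 0$ for $p > 0$, while $\check H^0(\underline U, \mathscr G) = \Gamma(M, \mathscr G)$. This collapses the horizontal cohomology to the column $p = 0$, namely $\Gamma(M, \mathscr I^\bullet)$, whose vertical cohomology is by definition $H^q(M, \mathscr F)$. Since both orders of computation abut to the cohomology of the same total complex, I obtain a natural isomorphism $\check H^p(\underline U, \mathscr F) \cong H^p(M, \mathscr F)$.

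Finally I would reconcile this with the paper's direct-limit definition. The refinement maps $\check H^p(\underline U, \mathscr F) \to \check H^p(\underline V, \mathscr F)$ are compatible with the canonical maps from Čech to sheaf cohomology, and the isomorphism above holds for every acyclic cover; passing to the direct limit (which is exact, by the recorded lemma) therefore identifies $\check H^p(M, \mathscr F)$ with $H^p(M, \mathscr F)$ as well, so the fixed good cover already computes the limit. I expect the main obstacles to be the two acyclicity inputs — establishing that flasque sheaves are Čech-acyclic (which rests on the exactness of the sheafified Čech complex together with $\Gamma$-acyclicity of flasque sheaves) and verifying that \emph{good} genuinely forces the finite intersections to be $\mathscr F$-acyclic — along with the bookkeeping needed to run the two-filtration (spectral sequence) comparison rigorously, since that machinery is not developed in the text.
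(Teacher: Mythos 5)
The paper records Leray's theorem \emph{without proof}, deferring to its references, so there is no in-paper argument to compare yours against; what you propose is precisely the standard Cartan--Leray double-complex proof found in those references (injective resolution $\mathscr I^\bullet$, the double complex $C^p(\underline U,\mathscr I^q)$, and the two collapsing filtrations identifying $\check H^p(\underline U,\mathscr F)$ and the derived-functor cohomology $H^p(M,\mathscr F)$ with the cohomology of one total complex). As a plan it is sound, and the two supporting lemmas you isolate (injective $\Rightarrow$ flasque $\Rightarrow$ $\Gamma$-acyclic; flasque $\Rightarrow$ \v{C}ech-acyclic for any cover) are the right ones. One adjustment to your last step: reconciling the fixed-cover computation with the paper's direct-limit definition of $\check H^\bullet(M,\mathscr F)$ rests not on exactness of direct limits but on \emph{cofinality} --- every open cover of a manifold admits a good refinement (Lemma \ref{good} and its corollary), so the limit over all covers may be computed over good covers alone, where your comparison isomorphisms commute with the refinement maps.

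The one genuine gap is the sentence ``contractibility of the intersections supplies this in the cases of interest.'' It does not, for the cases this paper actually cares about. Contractibility yields $\mathscr F$-acyclicity for constant (or locally constant) coefficients, but not for the coherent analytic sheaves to which the paper applies Leray's theorem ($\mathcal O^*$, $\mathcal O(L)$, $\mathcal O(\tilde L^{\otimes k}\otimes[-E])$, and so on). Concretely, the Hartogs figure $H=\{|z_1|<1,\ \tfrac12<|z_2|<1\}\cup\{|z_1|<\tfrac12,\ |z_2|<1\}\subset\mathbb C^2$ is contractible but not Stein (every holomorphic function on $H$ extends across the hole), and a non-Stein domain in $\mathbb C^2$ has $H^1(\cdot,\mathcal O)\neq 0$; since the limit \v{C}ech $\check H^1$ always agrees with sheaf $H^1$, the one-element cover $\{H\}$ --- a ``good cover'' in the paper's sense --- gives $0=\check H^1(\{H\},\mathcal O)\neq\check H^1(H,\mathcal O)$. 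So the theorem as literally stated, for arbitrary $\mathscr F$ and topologically good covers, is false; the correct hypothesis is the one your double complex actually uses, namely that all finite intersections be $\mathscr F$-acyclic. For the holomorphic sheaves in this paper that is arranged by taking covers whose finite intersections are Stein (for instance biholomorphic to convex sets or polydiscs) and invoking Cartan's Theorem B, not by contractibility. Your instinct to flag this step as the main obstacle was correct, but it is not a verification to be carried out --- it is a repair of the hypothesis, after which your argument goes through.
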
 %TODO
	Since all sheaves we'll be working with have coefficients in a category of
	$R$-modules (for some ring $R$, usually $\mathbb Z$ or a field) we'll be able to
	say that if the sequence 
	\[0\to H^\bullet(\underline U, \mathscr F)\to H^\bullet(\underline U, \mathscr
	G)\to H^\bullet(\underline U, \mathscr H)\to 0\]
	Is exact for every refinement of $\underline U$, the sequence
	\[0\to H^\bullet(M, \mathscr F)\to H^\bullet(M, \mathscr
	G)\to H^\bullet(M, \mathscr H)\to 0\]
	is likewise exact. 
	
	In particular, note that  for any good cover $\underline U$ (letting $H^{-1} =
	0$) we have that  $H^0(\underline U, \mathscr F)$ is 
	\[\frac{\{f_{\alpha} : f_{\beta}|_{U_\gamma} -
	f_{\gamma}|_{U_\beta}  = 0\}}{\{0\}}\]
	But these are exactly the sections which can be glued together to a global
	section; as such, $\check H^0(\underline U, \mathscr F) = \mathscr F(M)$ for
	any good  
	cover $\underline U$. Then lemma \ref{good} gives us that $\check H^0(M,
	\mathscr F) = \mathscr F(M)$ for every manifold $M$. 
	\subsubsection{List of Sheaves}
	For the reader's convenience, here is a list of the sheaves we will work
	with:
	\begin{itemize}
		\item The sheaf $\mathcal O(M)$ of holomorphic functions on a manifold $M$.
		\item The sheaf $\mathfrak M(M)$ of meromorphic functions on a manifold $M$.
		\item The sheaf $C^\infty(M)$ of smooth functions on a manifold $M$.
		\item The sheaf $\Omega_\mathbb R(M)$ of $\mathbb R$-valued differential
			$k$-forms on a manifold $M$. 
		\item The sheaf $\Omega(M) = \Omega_\mathbb C(M)$ of $\mathbb C$-valued
			differential $k$-forms on a manifold $M$. 
		\item The sheaf $\Omega^{(p, q)}(M)$ of forms of type $(p, q)$ on a
			manifold $M$.
		\item The sheaf $\mathcal O(M, E)$ of holomorphic sections of a vector bundle $E$ on a manifold $M$.
		\item The sheaf $\mathfrak M(M, E)$ of meromorphic sections of a vector bundle $E$ on a manifold $M$.
		\item The sheaf $C^\infty(M, E)$ of smooth sections of a vector bundle $E$ on a manifold $M$.
		\item The sheaf $\Omega_\mathbb C(E)\coloneq C^{\infty}(M, E\otimes
			\Omega_{\mathbb C}^k(M))$ of differential $k$-forms on a
			manifold $M$ with values in a vector bundle $E$. For $k=0$, we
			identify this with sections of $E$. 
		\item The sheaf $\Omega^{(p, q)}(E)\coloneq C^{\infty}(M, E\otimes
			\Omega^{(p, q)}(M))$ of differential $(p, q)$-forms on a
			manifold $M$ with values in a vector bundle $E$. For $p=q=0$, this
			is identified with sections of $E$. 
	\end{itemize}
%
	%
	%using an \textbf{acyclic covering} 
	%\begin{definition}
	%	If a covering $\underline U$ satisfies 
	%	\[H^q(U_{\alpha_1}\cap U_{\alpha_2}\cap ...\cap U_{\alpha_q}, \mathscr F) =
	%	0\]
	%	for each $\{\alpha_1, ..., \alpha_p\}$ and each $q>0$ we say $\underline U$ is $\mathscr
	%	F$-acyclic. 
	%\end{definition}
	%\begin{theorem}[Leray Theorem]
	%	If $\underline U$ is $\mathscr F$-acyclic then $H(\underline U, \mathscr F)
	%	\cong \check H(M, \mathscr F)$.
	%\end{theorem}
	%
	%We will not prove this now, but instead will establish a series of special cases
	%as we go. 
	%
	%
\subsection{Sheaves and Line Bundles}
	Recall that a real line bundle is a vector bundle who's fibers are diffeomorphic
	to $\mathbb R$. We'll be working with complex (holomorphic) line bundles, or vector bundles
	who's fibers are biholomorphic to $\mathbb C$ with local holomorphic
	trivializations; from now on, ``line bundle'' will
	mean complex holomorphic line bundle. Let $\{U_\alpha,
	\phi_\alpha\}_{\alpha\in A}$ be local (holomorphic) trivializations for a
	line bundle $L$ over $M$ with projection $\pi:L\to M$. Let $f\alpha$ be a function in
	the multiplicative holomorphic structure sheaf $\mathcal O^*(U_\alpha)$ for some
	$\alpha$. Then, as multiplying a holomorphic function to a complex vector space
	by a holomorphic complex-valued function yields another holomorphic function, we
	can define new trivializations
	$\phi_{\alpha, f_\alpha}:= f\cdot \phi_\alpha$ (where the $\cdot$ is standard
	scalar multiplication). Note that the transition maps between the
	trivializations $\phi_{\alpha, f_\alpha}$ and $\phi_{\alpha, f'_\alpha}$ will be
	$\frac{f'_\alpha}{f_\alpha}$. Furthermore, if we consider two trivializations
	$\phi_\alpha, \psi_\alpha$ which define the same line bundle $L$, we can
	consider the (holomorphic) transition function
	$\phi_\alpha^{-1}\circ\psi_\alpha$. But on each of the fibers $\pi^{-1}(x)$ in
	this  neighborhood this transition function restricts to a linear function $f_x:\mathbb C\to
	\mathbb C$; since linear functions on $\mathbb C$ are just constant multiples.
	Moreover, $f_x(a)$ varies holomorphicly as a function of $x$ for all $a$ and is
	nonzero, meaning that this family of stalks defines a function $f_\alpha\in \mathcal
	O^*(U_\alpha)$. Given two open covers and two family of transition functions, we
	can thus see that the transition functions determine the same line bundle if
	they are `associates' by elements of $\mathcal O^*(U_\alpha)$.
 	
	The work that we're doing with open covers, stalks, and intersections hints at
	a deeper connection with Čech cohomology. Indeed, if we consider a line bundle
	$L\to M$ with trivializations $f_\alpha$ on an open cover $\{U_\alpha\}$ (and
	corresponding transition functions $f_{\alpha_1\alpha_2}$) we have on the
	intersection $U_\alpha\cap U_\beta$ that  $f_{\alpha\beta}f_{\beta\alpha} = 1$
	and on the intersection $U_\alpha\cap U_\beta\cap U_\gamma$ that
	$f_{\alpha\beta}f_{\beta\gamma}f_{\gamma\alpha} = 1$ (where juxtaposition is
	multiplication of transition functions). But consider the Čech coboundary
	operation acting on these cochains (using multiplicative notation):   
	\[\delta f_{\alpha\beta} = 
	(f_{\alpha_1\alpha_2}|_{U_{\alpha_1}\cap U_{\alpha_2}})
	(f_{\alpha_2\alpha_3}|_{U_{\alpha_2}\cap U_{\alpha_3}})
	(f_{\alpha_3\alpha_1}|_{U_{\alpha_3}\cap U_{\alpha_1}})\]
	This is clearly 1 (the identity) if and only if our two line bundle conditions
	hold. Moreover, we've just seen that two line bundles are the same if and only
	if they differ multiplicatively by an element of $\mathcal O^*(M)$. This yields
	that the set of all line bundles over a manifold $M$ is precisely $\check H^1(M,
	\mathcal O^*)$, as when we take further refinements we can restrict these
	trivializations and obtain the same results, so we can refine to a good cover by
	lemma \ref{good}.
	
\section{Line Bundles and Positivity}
	% The following lemma will be useful in dealing with sections of line bundles:
	% \begin{lemma}
	% 	Fix a category of $R$-modules for a commutative ring $R$ with unity. For any
	% 	objects $A, B, C$:
	% 	\[\hom(A, B)\otimes_R C = \hom(A, B\otimes_R C)\] 
	% \end{lemma}
	% \begin{proof}
	% 	A function $f:A\to B$
	% \end{proof}
	\begin{definition}
		A \textbf{connection} on a line bundle $L$ is a linear map $D:\Gamma(M,
		\Omega^0(L))
		\to \Gamma(M, \Omega^1(L))$ satisfying:
		\[D (f s) = s\otimes df + f D(s)\]
		Where $f\in C^\infty(M)$ is a function, $s\in \mathcal C^\infty_L(M)$ is a section of $L$, and
		$d$ is the exterior differential.
	\end{definition}
	\begin{remark}
		Pick an open set $U$ on which there exists a non-vanishing section $e$.
		Call $e$ a \textbf{frame} for $L$. Then we have that $D(e) = e\otimes
		\theta$
		for some function $\theta$. Moreover, $D$ can be recovered from the data
		$(\theta, e)$; note that for any connection $D$ and section $s$ we can
		write $s = fe$ for $f = s/e$, and we then have $D(s) = D(fe) =  e\otimes
		df + fD(e) = e\otimes df + fe\otimes\theta$. We may sometimes write the data
		$(\theta, e)$ as $\theta_e$.
		\label{rewrite}
	\end{remark}
	\begin{definition}
		A connection $D$ on a vector bundle $L\to M$ for complex manifold $M$
		is \textbf{compatible with the complex structure} of $M$ if
		$\bar\pi\circ D = \bar\partial$ (where $\bar\pi$ is projection onto
		$L\otimes \Omega^{0, 1}$). We define
		$D^{\bar\partial}\coloneq\bar\pi\circ D$ and 
		$D^{\partial}\coloneq\pi\circ D$; note that $D = D^\partial +
		D^{\bar\partial}$.
		\end{definition}
		\begin{definition}
		If the line bundle has a Hermitian metric, the connection is
		\textbf{compatible with the metric} $d$ if $d\langle f, g\rangle = \langle D f,
			g\rangle + \langle g,
		D f\rangle$, where $\langle,\rangle$ is the Hermitian inner product and $d$ is
			the exterior derivative.
	\end{definition}
	\begin{lemma}
		If a connection $D$ is compatible with the complex structure, then
		for a holomorphic frame $\theta$ is of
		type $(1, 0)$. If it is compatible with the metric, then for a
		orthonormal frame $\theta =
		-\overline{\theta}$\label{char}
	\end{lemma}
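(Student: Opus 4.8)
The plan is to express everything through a local frame, reducing both assertions to the identity $D(e) = e\otimes\theta$ of Remark \ref{rewrite} and then reading off the type and conjugation information from the respective compatibility hypotheses. No analytic input is needed here; the content is purely the interaction of $D$ with the bidegree decomposition and with the Hermitian pairing.

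For the first statement, I would write $\theta = \theta^{(1,0)} + \theta^{(0,1)}$ with $\theta^{(1,0)}\in\Omega^{(1,0)}$ and $\theta^{(0,1)}\in\Omega^{(0,1)}$, and apply the projection $\bar\pi$ onto $L\otimes\Omega^{(0,1)}$ to $D(e) = e\otimes\theta$. This isolates $D^{\bar\partial}(e) = e\otimes\theta^{(0,1)}$. Compatibility with the complex structure means $D^{\bar\partial} = \bar\partial$ on sections, and since $e$ is a holomorphic frame we have $\bar\partial e = 0$. Hence $e\otimes\theta^{(0,1)} = 0$, and because $e$ is nowhere vanishing this forces $\theta^{(0,1)} = 0$, so $\theta = \theta^{(1,0)}$ is of type $(1,0)$.

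For the second statement, I would use that an orthonormal frame has $\langle e, e\rangle \equiv 1$, so $d\langle e,e\rangle = 0$. Substituting $f = g = e$ into the metric-compatibility identity gives $0 = \langle De, e\rangle + \langle e, De\rangle$. Plugging in $De = e\otimes\theta$ and using that the induced pairing on $L$-valued forms is linear in the first slot and conjugate-linear in the second (the conjugation sending the form factor $\theta$ to $\overline{\theta}$, consistent with the action of complex conjugation recorded earlier), the first term becomes $\theta\langle e,e\rangle = \theta$ and the second $\overline{\theta}\langle e,e\rangle = \overline{\theta}$. Therefore $\theta + \overline{\theta} = 0$, that is, $\theta = -\overline{\theta}$.

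The only point requiring care — and the one I would state explicitly before computing — is fixing the conventions by which the fiberwise Hermitian metric and the Dolbeault operators extend from sections of $L$ to $L$-valued forms, together with the precise rule for how complex conjugation acts on the form part of $\langle e, De\rangle$. Once these are pinned down (metric conjugate-linear in its second argument, conjugation interchanging $\Omega^{(1,0)}$ and $\Omega^{(0,1)}$), both computations collapse to a single line each, so I expect the write-up to be short.
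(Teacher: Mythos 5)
Your proposal is correct and follows essentially the same route as the paper: the first claim via the decomposition $D = D^{\partial} + D^{\bar\partial}$ together with $D^{\bar\partial} = \bar\partial$ and $\bar\partial e = 0$ for a holomorphic frame, and the second via $0 = d\langle e, e\rangle = \langle De, e\rangle + \langle e, De\rangle = \theta + \bar\theta$ for an orthonormal frame. Your version is slightly more explicit about the projection $\bar\pi$ and the conjugation conventions, but the substance is identical.
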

	\begin{proof}
		Suppose first $D$ is compatible with the complex structure, and let $e$
		be a holomorphic frame. But then $e\otimes\theta = D(e) = D^\partial(e) +
		D^{\bar\partial}(e) = D^\partial(e) +\bar\partial(e) = D^\partial(e)$,
		and $\theta$ has type $(1, 0)$. Moreover, if $D$ is compatible with
		the metric, then let $e$ be an orthonormal frame. Then $0 = d1 = d\langle e,
		e\rangle = \langle De, e\rangle + \langle e, De\rangle = \langle
		e\otimes \theta, e\rangle + \langle e, e\otimes \theta \rangle = \theta + \bar\theta$, and we
		are done. 
	\end{proof}
	\begin{lemma}
		For a holomorphic vector bundle $L$ with a Hermitian metric, there is a unique
		connection $D$ which is compatible with both the metric and the
		complex structure. We call this connection the \textbf{Chern
		Connection} or the \textbf{metric connection}.
	\end{lemma}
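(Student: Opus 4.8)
The plan is to prove existence and uniqueness separately, working first in a local holomorphic frame and then checking that the locally defined connections agree on overlaps. By Lemma \ref{char}, the two compatibility conditions pin down the connection form $\theta$ in a holomorphic frame, so the strategy is to solve for $\theta$ explicitly in terms of the metric and then verify the transformation law.

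First I would fix a holomorphic frame $e$ on an open set $U$ and write $h = \langle e, e\rangle$ for the (positive, smooth) metric coefficient. By Lemma \ref{char}, compatibility with the complex structure forces $\theta = \theta_e$ to be of type $(1,0)$. Compatibility with the metric gives the equation $dh = d\langle e, e\rangle = \langle De, e\rangle + \langle e, De\rangle = \theta h + h\bar\theta = h\theta + \overline{h\theta}$, using that $h$ is real. Decomposing $dh = \partial h + \bar\partial h$ into its $(1,0)$ and $(0,1)$ parts and matching types — $\theta$ is type $(1,0)$ and $\bar\theta$ is type $(0,1)$ — forces $h\theta = \partial h$, hence
\[
\theta = h^{-1}\partial h = \partial \log h.
\]
This formula both establishes local existence (define $D$ on $U$ by this $\theta$ via Remark \ref{rewrite}) and shows uniqueness, since any compatible connection must have precisely this $\theta$ in the frame $e$.

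The main obstacle — really the only substantive check — is confirming that these locally defined connections glue to a global connection, i.e.\ that the uniqueness forces agreement on overlaps so that no compatibility conditions are violated globally. On an overlap $U_\alpha\cap U_\beta$ with holomorphic frames related by $e_\beta = g_{\alpha\beta}e_\alpha$ for a nonvanishing holomorphic transition function $g_{\alpha\beta}$, one computes $h_\beta = |g_{\alpha\beta}|^2 h_\alpha$, and then $\theta_\beta = \partial\log h_\beta = \partial\log h_\alpha + \partial\log g_{\alpha\beta} + \partial\log\overline{g_{\alpha\beta}}$. Since $g_{\alpha\beta}$ is holomorphic, $\partial\log\overline{g_{\alpha\beta}} = 0$ and $\partial\log g_{\alpha\beta} = d\log g_{\alpha\beta} = g_{\alpha\beta}^{-1}dg_{\alpha\beta}$, which is exactly the transformation law a connection form must satisfy under the change of frame $e_\beta = g_{\alpha\beta}e_\alpha$. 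Hence the local connections patch to a well-defined global $D$.

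Finally I would observe that uniqueness is immediate from the local argument: any connection compatible with both structures must, in every holomorphic frame, have connection form $\partial\log h$, so two such connections agree locally and therefore globally. This closes the proof. (I note the statement is phrased for a vector bundle $L$ but the argument I give is the line-bundle case, where $h$ is a scalar; the general vector-bundle version replaces $h$ by the positive-definite Hermitian matrix $H$ and sets $\theta = \overline{H}^{-1}\partial \overline{H}$, with the matrix computation otherwise parallel.)
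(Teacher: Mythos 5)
Your proposal is correct and follows essentially the same route as the paper: both derive $\theta = h^{-1}\partial h$ in a local holomorphic frame by combining Lemma \ref{char} with the metric-compatibility equation and matching $(1,0)$/$(0,1)$ types, which gives uniqueness and local existence simultaneously. Your explicit verification of the transformation law $\theta_\beta = \theta_\alpha + g_{\alpha\beta}^{-1}dg_{\alpha\beta}$ on overlaps is a welcome (if strictly redundant) elaboration of the paper's one-line appeal to ``local existence plus uniqueness implies global existence,'' and your closing remark correctly flags that both arguments, as written, treat the line-bundle case of the stated vector-bundle claim.
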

	\begin{proof}
		We begin by showing uniqueness, for which it suffices to work in a
		general open set $U$. Let $e$ be a holomorphic frame for $L$ over $U$.
		Then $\theta_e$ is of type $(1, 0)$. If we let $h = \langle e,
		e\rangle$, we can write $dh - d\langle e, e\rangle  = \langle De,
		e\rangle + \langle e, De\rangle  = \theta\langle e, e\rangle +
		\bar\theta\langle e, e\rangle = h\otimes\theta+ \bar h\otimes\theta$. But by Lemma
		\ref{char}, $\theta$ has type $(1, 0)$ and $\bar\theta$ has type $(0,
		1)$. By decomposing $d = \partial + \bar\partial$ and matching types, we
		obtain that $\partial h = h\otimes\theta$ and $\bar\partial h = h\otimes\bar\theta$.
		But solving both these equations yields $\theta = h^{-1}(\partial h)$,
		which shows uniqueness. It also shows existence locally; however,
		uniqueness and existence locally is enough to show existence globally,
		and we are done. 
	\end{proof}
	Given a $D\colon \Gamma(M, \Omega^0(L))\to \Gamma(M, \Omega^1(L))$ we can induce
	a map \[D\colon \Gamma(M, \Omega^0(L))\to \Gamma(M, \Omega^2(L))\] by
	simply specifying that $D$ is linear and that Leibniz's rule holds for a section $s$ and a form
	$\sigma$:
	\[D(s\otimes \sigma) = s\otimes d\sigma + D(s)\wedge\sigma\]
	We then obtain an operator $D^2\colon \Gamma(M, \Omega^0(L))\to \Gamma(M,
	\Omega^2(L))$ by $D^2= D\circ D$. 
	\begin{definition}
	We call this operator the \textbf{curvature} of the connection $D$. 
	\end{definition}
\begin{remark}
	Similarly to Remark \ref{rewrite}, for any frame $e$ we can write $D^2(e) =
	e\otimes\Theta$ for some $2$-form $\Theta$.
\end{remark}
\begin{lemma}[Simplified Cartan Structure Equation]
	If $D^2$ is the curvature of a connection $D$ on a line bundle $L$, $e$ is a
	local frame for $L$, and $D^2(e) = e\otimes\Theta$, then locally we have
	\[\Theta = d\theta\]
\end{lemma}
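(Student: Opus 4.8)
The plan is to compute $D^2(e)$ directly from the extended Leibniz rule and then read off $\Theta$ using that $e$ is non-vanishing. By Remark \ref{rewrite} the connection acts on the frame by $D(e) = e\otimes\theta$ for the connection $1$-form $\theta = \theta_e$. Since $D^2 = D\circ D$, I would begin with $D^2(e) = D(e\otimes\theta)$ and apply the induced operator $D\colon\Gamma(M,\Omega^0(L))\to\Gamma(M,\Omega^2(L))$ defined just above by $D(s\otimes\sigma) = s\otimes d\sigma + D(s)\wedge\sigma$. Taking $s = e$ and $\sigma = \theta$ gives
\[D^2(e) = e\otimes d\theta + D(e)\wedge\theta = e\otimes d\theta + (e\otimes\theta)\wedge\theta = e\otimes d\theta + e\otimes(\theta\wedge\theta).\]

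The key step, and the only place the rank-one hypothesis enters, is the observation that $\theta\wedge\theta = 0$. Because $L$ is a line bundle, $\theta$ is an honest scalar-valued $1$-form rather than a matrix of forms, so graded-commutativity of the wedge product forces $\theta\wedge\theta = -\theta\wedge\theta$, hence $\theta\wedge\theta = 0$. It is exactly this term that survives for higher-rank bundles and produces the extra summand in the full Cartan structure equation, which is why the present statement is the ``simplified'' one. Consequently $D^2(e) = e\otimes d\theta$.

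Finally, comparing with the defining relation $D^2(e) = e\otimes\Theta$ and using that $e$ is a frame, hence non-vanishing on $U$, the $2$-form factors must agree, giving $\Theta = d\theta$ as claimed. I expect no real obstacle in this argument; the only points needing care are checking that the extended $D$ is well-defined on $\Omega^0(L)$ and that the sign conventions in the Leibniz rule match those fixed above, after which the computation is entirely forced.
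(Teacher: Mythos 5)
Your proposal is correct and follows exactly the paper's own argument: expand $D^2(e) = D(e\otimes\theta)$ via the extended Leibniz rule to get $e\otimes d\theta + e\otimes(\theta\wedge\theta)$, then kill the second term since $\theta\wedge\theta = 0$. You in fact go slightly further than the paper by justifying \emph{why} $\theta\wedge\theta$ vanishes (it is a scalar-valued $1$-form because $L$ has rank one, so graded-commutativity applies), a point the paper merely asserts.
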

\begin{proof}
	Pick a local frame $e$ for $L$ and consider $e\otimes\Theta = D^2(e) =
	D(e\otimes \theta) = e\otimes d\theta + D(e)\wedge \theta = e\otimes
	d\theta + e\otimes\theta\wedge\theta$. This holds for any frame and
	$\theta\wedge\theta = 0$, so we reach
	the intended conclusion.
\end{proof}
\begin{remark}
	A real $(1, 1)$ form $\omega$ is a complex $(1, 1)$ form with real
	coefficients satisfying $\bar\omega = \omega$. 
\end{remark}
	\begin{remark}
			Observe that for a smooth function $f$ and sections $s, t$ (writing $D(s)$
			as $s'\otimes \sigma$)
			\begin{align*}
				D^2(fs+t) =& D(s\otimes df +f (s'\otimes \sigma)) + D^2(t)\\
				=&s\otimes d^2f + s'\otimes \sigma\wedge df + (fs')\otimes d\sigma + D(fs')\wedge \sigma
				+ D^2(t)\\ 
				=&s\otimes d^2f + s'\otimes \sigma\wedge df + (fs')\otimes d\sigma + (s'\otimes df +
				fD(s'))\wedge \sigma
				+ D^2(t) \\
				=&s\otimes d^2f + s'\otimes \sigma\wedge df + (fs')\otimes d\sigma + s'\otimes df\wedge \sigma +
				fD(s')\wedge \sigma + D^2(t) \\
				=& (fs')\otimes d\sigma + fD(s')\wedge \sigma + D^2(t) \\
				=& f(s'\otimes d\sigma + D(s')\wedge \sigma) + D^2(t) \\
				=& f(D(s'\otimes \sigma)) + D^2(t) \\
				=& f(D^2(s)) + D^2(t) \\
			\end{align*}
				From this we see that $D^2$ respects multiples by a smooth
				function. Moreover, we have $D^2(s) = D^2(fe)
	=fD^2(e) = fe\otimes \Theta$.\label{linear}
			\end{remark}
\begin{remark}\label{invariant}
	Note that if $e$ and $e'$ are frames for $L$ on the same open set with $g =
	\frac{e}{e'}$, then
	$D^2(e') = D^2(ge)$. By Remark \ref{linear}, we observe that this is the
	same as $gD^2(e) = g(e\otimes\Theta_{e})$. But then this is the same as
	$g((g^{-1}e')\otimes \Theta_e)$, so we see $\Theta_{e'} = gg^{-1}\Theta_e =
	\Theta_e$. This is in direct contrast to $\theta_e$. 
\end{remark}
\begin{corollary}
	Suppose $D$ is a connection which is compatible with a metric and a complex
	structure.
		The curvature form $\Theta$ is an imaginary $(1, 1)$-form; that is,
		$iD^2(\sigma)$ lies in $\Omega^2_\mathbb R(E)\cap \Omega^{(1, 1)}(E)$ for
		any $\sigma$.
\end{corollary}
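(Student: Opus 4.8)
The plan is to exploit the frame-independence of the curvature form established in Remark \ref{invariant}, which allows me to compute $\Theta$ in whatever local frame is most convenient and then conclude globally. Since $D$ is compatible with the complex structure, a holomorphic frame $e$ is available on a neighborhood $U$, and the existence half of the Chern connection lemma furnishes the explicit connection form $\theta = h^{-1}\partial h = \partial\log h$, where $h = \langle e, e\rangle$ is the metric coefficient. Note $h$ is a positive, \emph{real-valued} scalar function: positivity comes from positive-definiteness, and reality from conjugate-symmetry, since $\langle e,e\rangle = \overline{\langle e,e\rangle}$. By the Simplified Cartan Structure Equation we have $\Theta = d\theta$ locally, so the whole problem collapses to analyzing $d\,\partial\log h$.

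First I would pin down the type. Decomposing $d = \partial + \bar\partial$ and applying it to $\theta = \partial\log h$, the $(2,0)$-component is $\partial\partial\log h$, which vanishes because $\partial\partial = 0$ by the double-complex lemma. What survives is $\Theta = \bar\partial\partial\log h$, and since $\partial$ raises the holomorphic degree and $\bar\partial$ the antiholomorphic degree, this expression is manifestly of type $(1,1)$; multiplying by $i$ does not alter the type.

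Next I would verify that $\Theta$ is imaginary, i.e. $\bar\Theta = -\Theta$, which is exactly the condition that $i\Theta$ be real. Using the conjugation rules $\overline{\partial f} = \bar\partial\bar f$ and $\overline{\bar\partial f} = \partial\bar f$ from the conjugation remark, together with the reality of $h$ (hence of $\log h$), I would compute $\bar\Theta = \overline{\bar\partial\partial\log h} = \partial\bar\partial\log h$. The anticommutation $\partial\bar\partial = -\bar\partial\partial$ from the double-complex lemma then yields $\bar\Theta = -\bar\partial\partial\log h = -\Theta$. Combining this with the previous paragraph shows $i\Theta \in \Omega^2_{\mathbb R}(E)\cap\Omega^{(1,1)}(E)$, and since $D^2(\sigma) = \sigma\otimes\Theta$ by Remark \ref{linear}, the claimed statement about $iD^2(\sigma)$ follows.

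The main subtlety here is not computational but the choice of frame. Lemma \ref{char} supplies two distinct normalizations—$\theta$ of type $(1,0)$ in a holomorphic frame, and $\theta = -\bar\theta$ in an orthonormal frame—but no single frame is simultaneously holomorphic and orthonormal, so one cannot naively combine the two characterizations. The clean route is therefore to commit to a holomorphic frame and lean on the explicit formula $\theta = h^{-1}\partial h$, letting the reality of $h$ carry precisely the information that orthonormality would otherwise supply. Finally, because each step is a local computation, I would invoke Remark \ref{invariant} to certify that $\Theta$ is independent of the chosen frame, so that these local identities genuinely describe the globally-defined curvature form.
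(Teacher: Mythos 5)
Your proof is correct, and it takes a genuinely different route from the paper's. The paper runs a two-frame argument glued together by the frame-invariance of $\Theta$ (Remark \ref{invariant}): in a holomorphic frame $\theta$ has type $(1,0)$ (Lemma \ref{char}), so $\Theta = d\theta$ has no $(0,2)$ part, while in a frame satisfying $\bar\theta = -\theta$ (the metric-compatibility half of Lemma \ref{char}) one gets $\overline\Theta = d\bar\theta = -d\theta = -\Theta$; since $\Theta$ is the same form in every frame, the two facts hold simultaneously and combine to give the $(1,1)$ type and the reality of $i\Theta$. You instead stay in a single holomorphic frame and invoke the explicit Chern form $\theta = h^{-1}\partial h = \partial\log h$ extracted from the uniqueness proof of the Chern connection, obtaining $\Theta = \bar\partial\partial\log h$ outright, after which both properties drop out of $h$ being real and positive. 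Your approach buys two things: it needs only one frame, so it never has to reconcile the two normalizations of Lemma \ref{char}, and your closing observation that no frame is simultaneously holomorphic and orthonormal puts a finger on a slip in the paper's own text, which says ``picking a holomorphic frame $e'$'' but then uses $\overline\theta_{e'} = -\theta_{e'}$, the orthonormal-frame property (the paper's argument is repaired by reading ``orthonormal'' there, since the Cartan equation $\Theta = d\theta$ holds in any smooth frame). The paper's approach, in exchange, avoids solving for $\theta$ explicitly and uses only the qualitative characterizations of compatibility. Your final appeals to Remark \ref{invariant} for frame-independence and to Remark \ref{linear} for $D^2(\sigma) = \sigma\otimes\Theta$ are exactly what is needed to pass from the local computation to the global statement.
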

\begin{proof}
	First pick a holomorphic local frame $e$ for $L$. Then $\theta_e$ is a $(1,
	0)$ form. But then $\Theta_e = d\theta_e = \partial\theta_e +
	\bar\partial\theta_e$. Moreover, $\bar\partial\theta$ must be of type $(1,
	1)$ (as $\theta$ is of type $(1, 0)$). But $\partial$ only increases the
	holomorphic type (the first coordinate); as such, $\Theta_e$ has no $(0, 2)$
	part. But Remark \ref{invariant} states that $\Theta = \Theta_e$ is
	invariant under change of local frame; in
	particular, picking a holomorphic frame $e'$, we see that $\overline\Theta =
	\overline{d\theta_{e'}} = d\overline{\theta}_{e'} = -d\theta_{e'}$, so in
	particular $\Theta = -\overline{\Theta}$ (for any frame, as $\Theta_e$ is
	indpendent of $e$ by Remark \ref{invariant}). But then $\overline{\Theta}$ must
	likewise have no $(0, 2)$ part, and thus be purely of type $(1, 1)$.
	Moreover, the condition $\Theta = -\overline{\Theta}$ means that
	$\overline{i\Theta} = i\Theta$, and $i\Theta$ is \textbf{real} in the
	sense below.
\end{proof}
\begin{lemma}\label{add}
	If $L$ and $L'$ are Hermitian bundles with Chern connections $D$ and $\tilde
	D'$ and curvature forms $\Theta$ and $\Theta'$, then on $L\otimes L'$ the
	curvature form is given by $\Theta +\Theta'$.
\end{lemma}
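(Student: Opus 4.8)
The plan is to realize the Chern connection on $L\otimes L'$ explicitly as the natural tensor-product connection assembled from $D$ and $D'$, and then extract its curvature through the Simplified Cartan Structure Equation. First I would define an operator $\hat D$ on sections of $L\otimes L'$ by setting, on simple tensors,
\[\hat D(s\otimes s') = D(s)\otimes s' + s\otimes D'(s'),\]
and extending by linearity, where $D(s)\otimes s'$ denotes the section of $(L\otimes L')\otimes\Omega^1$ obtained by tensoring the $\Omega^1$-valued section $D(s)$ with $s'$ in the bundle factor. A short computation shows that $\hat D$ satisfies the defining Leibniz rule for a connection; one must also check that $\hat D$ is well-defined, i.e. independent of how a section is written as a sum of simple tensors, which follows from the bilinearity of the formula together with the Leibniz rules for $D$ and $D'$.

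Next I would verify that $\hat D$ is compatible with both the complex structure and the tensor-product metric $\langle s\otimes s', t\otimes t'\rangle = \langle s, t\rangle\langle s', t'\rangle$ on $L\otimes L'$. By the uniqueness of the Chern connection this identifies $\hat D$ as the metric connection on $L\otimes L'$, so its curvature is precisely the form we must compute. Compatibility with the metric is a direct application of the product rule: expanding $d\langle s\otimes s', t\otimes t'\rangle$ and invoking the metric-compatibility of $D$ and $D'$ reproduces $\langle \hat D(s\otimes s'), t\otimes t'\rangle + \langle s\otimes s', \hat D(t\otimes t')\rangle$ term by term.

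Finally, choosing holomorphic frames $e$ for $L$ and $e'$ for $L'$ over a common open set, the product $e\otimes e'$ is a holomorphic frame for $L\otimes L'$, and the defining formula gives $\hat D(e\otimes e') = (e\otimes e')\otimes(\theta + \theta')$, so the connection form of $\hat D$ in this frame is $\theta + \theta'$. Since $\theta$ and $\theta'$ are each of type $(1,0)$ by Lemma \ref{char}, so is their sum, which also confirms compatibility with the complex structure through that lemma. The Simplified Cartan Structure Equation then yields $\hat\Theta = d(\theta + \theta') = d\theta + d\theta' = \Theta + \Theta'$, as claimed. The one place demanding genuine care is the bookkeeping in the first two steps—establishing well-definedness and the two compatibility conditions so that uniqueness of the Chern connection applies; once $\hat D$ is known to be the metric connection, the curvature identity drops out of the structure equation immediately.
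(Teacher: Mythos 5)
Your proof is correct, and its endgame coincides with the paper's: identify the connection form of the metric connection on $L\otimes L'$ in a suitable frame as $\theta + \theta'$, then apply the Simplified Cartan Structure Equation together with frame-invariance of the curvature form. The route to that connection form differs, though. The paper never constructs the tensor connection explicitly: it writes down the product metric, takes frames $e, e'$ of unit norm, and extracts the connection form from the metric-compatibility identity $d\langle e\otimes e', e\otimes e'\rangle = 0$ --- a computation that, as printed, is rather terse (the displayed equalities appear to conflate the connection form of the product with the individual $\theta_e$, $\theta_{e'}$, and the final line reads as if their sum were zero). You instead build $\hat D(s\otimes s') = D(s)\otimes s' + s\otimes D'(s')$, check well-definedness and both compatibilities, and invoke uniqueness of the Chern connection before passing to a \emph{holomorphic} frame. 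This buys rigor at the cost of length: the well-definedness check (consistency of the formula with $fs\otimes s' = s\otimes fs'$) and the appeal to uniqueness make the identification of the Chern connection on $L\otimes L'$ airtight, where the paper leaves it implicit. One small point of care: Lemma \ref{char} as stated gives only that compatibility implies $\theta$ is of type $(1,0)$ in a holomorphic frame; you use the converse (a $(1,0)$ connection form in a holomorphic frame implies compatibility with the complex structure), which is true and easy, but deserves the one-line verification $\bar\pi\hat D(f\, e\otimes e') = (e\otimes e')\otimes\bar\partial f = \bar\partial(f\, e\otimes e')$ rather than a bare citation.
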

\begin{proof}
	Note first that $L\otimes L'$ has a natural Hermitian structure given by
	(if $h(x, y)$ is the Hermitian metric on $L$ and $h'(x, y)$ the metric on
	$L'$):
	\[\langle x\otimes x', y\otimes y'\rangle = h(x, y)h'(x', y')\]
	Clearly this is bilinear in both arguments and thus a well defined function;
	the Hermitian axioms follow naturally. Note that a connection $\Delta$ which is
	compatible with this metric satisfies:
	\[d\langle x\otimes x', y\otimes y'\rangle = \langle \Delta(x\otimes x'),
	y\otimes y'\rangle + \langle x\otimes x',
	\Delta(y\otimes y')\rangle\]
	For local frames $e, e'$ with $h(e, e) = h'(e', e') = 1$ we have:
	\[0 =d\langle e\otimes e', e\otimes e'\rangle = \langle \Delta(e\otimes e'),
	e\otimes e'\rangle + \langle e\otimes e',
	\Delta(e\otimes e')\rangle\]
	\[= \langle (e\otimes e')\otimes\theta_e, e\otimes e'\rangle + \langle e\otimes e',
	(e\otimes e')\otimes\theta_{e'}\rangle\]
	\[=\theta_{e} + \theta_{e'}\]
	The result follows from the Cartan structure equation, the linearity of the
	differential, and the fact that the curvature form is invariant under change
	of frame.
\end{proof}
\begin{remark}
	To a Hermitian metric we can associate a real $(1, 1)$-form $\omega \coloneq
	\frac i2 \langle\bullet, \bullet\rangle$. Conversely, to any $(1,
	1)$ form $\omega$ with $\bar\omega = \omega$ we can associate an operator
	$(v, w) \coloneq -2i\omega(v, \bar w)$. Given tangent vectors $v, w$ we have
	\[\overline{-2i\omega(v, \bar w)} = \overline{-2i\sum v_idz_i\wedge \bar
	w_id\bar z_j} = -\bar{2i}\sum \bar{v_i}d\bar z_i\wedge 
	w_idz_j= -2i\omega(w, \bar v),\]
	and given a function $f$ we have 
	\[2i\omega(fv, \bar w) = 2i\sum fv_idz_i\wedge \bar w_id\bar z_j =
	2if\omega(v, \bar w)\]
	And
	\[2i\omega(v,\overline{fw}) = 2i\sum v_idz_i\wedge \overline{fw}_idz_j =
	-2i\bar{f}\omega(w, \bar v)\]
	So $\omega$ induces a Hermitian operator. If this operator is
	positive-definite; that is, if it is a Hermitian inner product, then we call
	$\omega$ \textbf{positive}. Note that a sufficient condition for this is
	that $-i\omega(v, \bar v)(x)$ is positive for all $x$ and all $v$.
\end{remark}
\begin{definition}
	A \textbf{Kähler manifold} is a manifold with a positive, $d$-closed $(1,
	1)$-form $\omega$; equivalently, a manifold with a metric who's associated
	form is closed. 
\end{definition}
\begin{definition}
A line bundle $L$ is \textbf{positive} if 
	$\frac{i}{2\pi}\Theta$ is a positive $(1, 1)$ form. 
\end{definition}
\section{The Kodaira Vanishing Theorem}
\subsection{Some Harmonic Theory}
We now assume $M$ is a compact manifold.
\begin{definition}
	Define the \textbf{Hodge star operator} $\star:\Omega^{p, q}(M)\to \Omega^{n-p,
	n-q}(M)$ to be the operator such that $\alpha \wedge\star\beta = \langle
	\alpha, \beta\rangle\Phi$, where $\Phi$ is the volume form $(n!)\omega$. We
	can extend this to an operator $\star_E$ on $\Omega^{\bullet,
	\bullet}(E)$ for a vector bundle $E$ by the rule $\star_E(s\otimes
	\sigma) = s\otimes \overline{\star\sigma} = s\otimes\star\overline\sigma$
	(as $\star$ is $\mathbb C$-antilinear). 
\end{definition}
\begin{remark}
	Suppose $M$ is a compact manifold with a metric on the tangent bundle and
	$L$ a line
	bundle with a metric; we then have metrics on $\Omega^k(L)$. 
	Given an operator $D^{\bar\partial} = \bar\partial$ on a line bundle $L$
	with a Hermitian metric. We define an inner product $( , )$ on $\Omega^{(p,
	q)}(L)$ by
	\[(\alpha, \beta) = \int_{M} \langle \alpha, \beta\rangle \Phi\]
	Where $\Phi$ is a volume form.
\end{remark}
\begin{definition} % TODO
	The adjoint $\bar\partial^*$ of $\bar\partial$ is the operator satisfying:
		\[(\bar\partial \phi, \psi) = (\phi, \bar\partial^*\psi)\] 
\end{definition}
\begin{remark}
	This operator is given by $\star\bar\partial\star$, showing it's existence.
\end{remark}
\begin{definition}
	We define the $\bar\partial$\textbf{-Laplacian} to be $\Delta =
	\bar\partial\bar\partial^* + \bar\partial^*\bar\partial$. A form is called
	\textbf{$\bar\partial$-harmonic} if it is in the kernel of this Laplacian.
	We define $\mathcal H^{p, q}$ to be the set of all harmonic $(p, q)$ forms.
\end{definition}
\begin{definition}
	Recall that $\bar\partial$ is a well defined operator on $\Omega^{(p, q)}$
	which squares to zero. We define the \textbf{Dolbeault cohomology with
	coefficients in $L$} to be $H^{(p, q)}(L) \coloneq H^q(\Omega^{(\bullet,
	q)}(L), \bar\partial)$ (where $H^q$ is the standard cohomology functor).
\end{definition}
\begin{theorem}[The Hodge Theorem] %TODO [maybe]
	The following results hold:
	\begin{enumerate}
	\item The space $\mathcal H^{(p, q)}(L)$ is finite-dimensional.
	\item There is an isomorphism $\check H^{q)}(M, \Omega^p(L)) \cong \mathcal H^{(p, q)}(L)$. 
	\item In fact, harmonic forms are representatives of $H^{(p, q)}$. 
	\end{enumerate}
\end{theorem}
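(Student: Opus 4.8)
The plan is to treat parts (1) and (3) as consequences of the general theory of elliptic operators on a compact manifold, and to obtain the Čech group appearing in (2) by combining the harmonic representation with the Dolbeault resolution. First I would record that the $\bar\partial$-Laplacian $\Delta = \bar\partial\bar\partial^* + \bar\partial^*\bar\partial$ is a second-order, formally self-adjoint, elliptic operator on $\Omega^{(p,q)}(L)$. Self-adjointness is immediate from the defining relation $(\bar\partial\phi,\psi) = (\phi,\bar\partial^*\psi)$ for the inner product $(\alpha,\beta) = \int_M\langle\alpha,\beta\rangle\Phi$. Ellipticity is a principal-symbol computation: the symbol of $\bar\partial$ at a nonzero covector is wedging with $\xi^{0,1}$ and that of $\bar\partial^*$ is the adjoint contraction, so the symbol of $\Delta$ is, up to a positive scalar, $|\xi|^2\cdot\mathrm{Id}$, which is invertible.

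With ellipticity in hand I would invoke the fundamental theorem on elliptic self-adjoint operators over a compact manifold (the analytic core of Hodge theory, whose proof via Sobolev estimates, Gårding's inequality, and Rellich compactness the paper has elected to assume). This yields that $\Ker\Delta = \mathcal H^{(p,q)}(L)$ is finite-dimensional, giving part (1) at once, and furnishes an $L^2$-orthogonal decomposition $\Omega^{(p,q)}(L) = \mathcal H^{(p,q)}(L)\oplus\im\Delta$ together with a Green's operator inverting $\Delta$ off the harmonic forms. To sharpen this I would use that $\im\Delta\sub\im\bar\partial+\im\bar\partial^*$, that $\bar\partial^2=0$, and that $\im\bar\partial\perp\im\bar\partial^*$ (by adjointness), producing the refined splitting $\Omega^{(p,q)}(L) = \mathcal H^{(p,q)}(L)\oplus\im\bar\partial\oplus\im\bar\partial^*$. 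A $\bar\partial$-closed form is orthogonal to $\im\bar\partial^*$, so its harmonic-and-$\bar\partial$-exact decomposition has no $\bar\partial^*$ component; hence each class of $H^{(p,q)}(L)$ contains exactly one harmonic representative, which is part (3) and gives the isomorphism $H^{(p,q)}(L)\cong\mathcal H^{(p,q)}(L)$.

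Finally, to reach the Čech group $\check H^q(M,\Omega^p(L))$ in (2), I would invoke the $\bar\partial$-Poincaré (Dolbeault–Grothendieck) lemma, which shows that
\[0\to\Omega^p(L)\to\Omega^{(p,0)}(L)\xto{\bar\partial}\Omega^{(p,1)}(L)\xto{\bar\partial}\Omega^{(p,2)}(L)\xto{\bar\partial}\cdots\]
is a resolution of the sheaf $\Omega^p(L)$ of holomorphic $p$-forms valued in $L$ by fine, hence acyclic, sheaves. The abstract de Rham theorem then identifies the cohomology of the complex of global sections, namely $H^{(p,q)}(L)$, with $\check H^q(M,\Omega^p(L))$; composing with the harmonic isomorphism from the previous paragraph yields (2).

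The main obstacle is precisely the elliptic package invoked in the second paragraph: the finite-dimensionality of $\Ker\Delta$ and the orthogonal decomposition depend on the full apparatus of Sobolev-space regularity and the compact embedding theorem, which is the Hodge-analytic content the paper has chosen not to develop. Granting that input, the remaining steps — the symbol computation establishing ellipticity, the purely algebraic splitting of $\im\Delta$ into the images of $\bar\partial$ and $\bar\partial^*$, and the Dolbeault resolution — are comparatively routine.
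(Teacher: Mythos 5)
Your proposal cannot be compared against the paper's own argument for the simple reason that the paper has none: as announced in the abstract, a rigorous development of Hodge theory is precisely what the paper elects to omit, and this theorem is used as a black box (most notably in the proof of the Kodaira--Nakano vanishing theorem). What you have written is the standard proof skeleton, and it is sound. You correctly isolate the one irreducible analytic input --- ellipticity of $\Delta$ together with the compact-manifold elliptic package (finite-dimensional kernel, the orthogonal decomposition $\Omega^{(p,q)}(L)=\mathcal H^{(p,q)}(L)\oplus\im\Delta$, and the Green's operator), resting on Sobolev regularity, G\aa rding's inequality, and Rellich compactness --- and the superstructure you build on it is right: the refined splitting $\mathcal H^{(p,q)}(L)\oplus\im\bar\partial\oplus\im\bar\partial^*$, the existence and uniqueness of a harmonic representative in each Dolbeault class (giving parts 1 and 3), and the identification of Dolbeault with \v Cech cohomology via the $\bar\partial$-Poincar\'e lemma, fineness of the sheaves $\Omega^{(p,q)}(L)$, and the abstract de Rham theorem (giving part 2). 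That last sheaf-theoretic ingredient is of the same nature as Leray's theorem, which the paper already cites without proof, so your argument sits at exactly the level of rigor the paper adopts elsewhere. The one step you should make explicit is that on a compact manifold $\Delta\eta=0$ forces $\bar\partial\eta=0$ and $\bar\partial^*\eta=0$, via $0=(\Delta\eta,\eta)=(\bar\partial\eta,\bar\partial\eta)+(\bar\partial^*\eta,\bar\partial^*\eta)$; without this fact harmonic forms are not visibly orthogonal to $\im\bar\partial$ and $\im\bar\partial^*$, and both your three-way splitting and the uniqueness of the harmonic representative would be unjustified.
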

\begin{definition}
	If $M$ is Kähler with form $\omega$, we define the operator $\lambda:\Omega^{(p, q)}(L) \to \Omega^{(p+1,
	q+1)}(L)$ to be $\lambda(s\otimes\eta) = s\otimes\omega\wedge\eta$ and $\Lambda$
	to be the adjoint of $\lambda$. 
\end{definition}
\begin{definition}
	We define the \textbf{commutator} $[A, B]\coloneq A\circ B - B\circ A$.
\end{definition}
\subsection{Identities from Hodge Theory}
We record here (some proven, some cited) a few useful identities which we will
leverage in our proof of the Kodaira Vanishing Theorem.
\begin{lemma}\label{woah}
	$[\Lambda ,\lambda]|_{\Omega^{p, q}(L)} = (n-(p+q))\id$. 
\end{lemma}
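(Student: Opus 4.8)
The plan is to prove this as a pointwise statement in linear algebra. Both $\lambda$ and $\Lambda$ are $C^\infty(M)$-linear tensorial operators acting only on the form factor of a decomposable element $s\otimes\eta$, so $[\Lambda,\lambda](s\otimes\eta) = s\otimes[\Lambda,L]\eta$, where $L = \omega\wedge(-)$ is the scalar-valued Lefschetz operator and $\Lambda = L^*$ is its adjoint for the induced inner product on forms; the bundle $L$ and its metric are mere spectators. Thus it suffices to establish $[\Lambda,L] = (n-(p+q))\id$ on $\Omega^{(p,q)}$ at a single point $x\in M$, working inside $\bigwedge^\bullet T^*_x M\otimes\mathbb{C}$, where $n = \dim_{\mathbb C} M$. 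I emphasize that this identity does not use the Kähler condition $d\omega = 0$: it concerns only fiberwise linear algebra and holds for any Hermitian metric.

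First I would use that $\omega$ is a positive $(1,1)$-form to choose a unitary coframe $\varphi_1,\dots,\varphi_n$ of type $(1,0)$ at $x$ so that $\omega = \tfrac{i}{2}\sum_{j=1}^n \varphi_j\wedge\bar\varphi_j$ and the induced Hermitian inner product makes the monomials $\varphi_I\wedge\bar\varphi_J$ orthogonal with $\langle\varphi_j,\varphi_j\rangle = \langle\bar\varphi_j,\bar\varphi_j\rangle = 2$. Writing $\omega = \sum_j\omega_j$ with $\omega_j = \tfrac{i}{2}\varphi_j\wedge\bar\varphi_j$, I get $L = \sum_j L_j$ and $\Lambda = \sum_j\Lambda_j$, where $L_j = \omega_j\wedge(-)$ and $\Lambda_j = L_j^*$. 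The exterior algebra factors as a tensor product over the $n$ Hermitian ``lines,'' the $j$-th being the two-dimensional exterior algebra generated by $\varphi_j$ and $\bar\varphi_j$, and because each $\omega_j$ has even degree the operators in distinct directions commute; in particular $[\Lambda_j, L_k] = 0$ whenever $j\neq k$, so $[\Lambda,L] = \sum_j [\Lambda_j, L_j]$.

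It then remains to compute $[\Lambda_j, L_j]$ in the one-variable model, whose exterior algebra has the four basis elements $1$, $\varphi_j$, $\bar\varphi_j$, $\varphi_j\wedge\bar\varphi_j$. A direct calculation, using that the normalization forces $\|\varphi_j\wedge\bar\varphi_j\|^2 = 4$, gives $[\Lambda_j, L_j] = +\id$ on the degree-$0$ piece, $0$ on the two degree-$1$ pieces, and $-\id$ on the degree-$2$ piece; equivalently $[\Lambda_j,L_j]$ contributes $+1$ when direction $j$ is ``empty'' in a monomial, $0$ when it is ``half-filled,'' and $-1$ when it is ``full.'' Summing over $j$, a monomial of type $(p,q)$ has $e$ empty, $h$ half-filled, and $f$ full directions, with $e+h+f = n$ and $p+q = h+2f$; hence $[\Lambda,L] = e-f = (e+h+f)-(h+2f) = n-(p+q)$, which is the claim.

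The main obstacle I anticipate is bookkeeping rather than conceptual difficulty: pinning down the inner-product normalization so that the coefficient comes out to exactly $1$ rather than some scalar multiple, and tracking the Koszul signs arising when one identifies wedging-with-$\omega_j$ with an operator on the $j$-th tensor factor. Both are mechanical once the convention $\langle\varphi_j,\varphi_j\rangle = 2$ is fixed, and the evenness of $\omega_j$ guarantees the signs never obstruct the vanishing of the cross-terms $[\Lambda_j,L_k]$.
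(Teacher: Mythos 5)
Your proof is correct, but note that the paper does not actually prove Lemma \ref{woah}: it records it as ``a result from linear algebra'' and cites Huybrechts, Prop.\ 1.2.26, so there is no internal argument to compare against. What you have written is, in essence, the proof living behind that citation. The reduction is exactly right: $\lambda$ and $\Lambda$ are tensorial and act only on the form factor, so the bundle and its metric drop out; the statement becomes pointwise linear algebra (and, as you correctly stress, never uses $d\omega = 0$); the exterior algebra splits as a graded tensor product of $n$ two-dimensional Hermitian pieces; the cross-commutators $[\Lambda_j, L_k]$ for $j \neq k$ vanish because each $\omega_k$ has even degree; and the one-variable model gives eigenvalues $+1, 0, 0, -1$ on the empty, half-filled, and full pieces, so that summing yields $e - f = (e+h+f) - (h+2f) = n - (p+q)$. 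Your normalization also checks out and is consistent with the paper's convention $\omega = \frac{i}{2}\langle\bullet,\bullet\rangle$: with $\|\varphi_j\|^2 = 2$ one gets $\|\varphi_j\wedge\bar\varphi_j\|^2 = 4$, hence $\Lambda_j(\varphi_j\wedge\bar\varphi_j) = -2i$ and $\Lambda_j(\omega_j) = 1$, which produces the clean $\pm 1$ with no stray scalar. What your route buys is self-containedness: it fills in the single linear-algebra input to the Kodaira--Nakano vanishing argument that the paper outsources to a reference.
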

\begin{remark}
	This is a result from linear algebra. See \autocite{Huy} Prop. 1.2.26.
\end{remark}
\begin{lemma}[Hodge identities]
	\[[\Lambda, \bar\partial] = -\frac{i}2\partial^*\]
	\[[\Lambda, \partial] = -\frac{i}2\bar\partial^*\]
\end{lemma}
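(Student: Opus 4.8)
The plan is to reduce each identity to a pointwise statement, verify it on flat $\mathbb C^n$ by an explicit algebraic computation, and then transfer the result to an arbitrary K\"ahler manifold by exploiting the closedness of $\omega$. The key structural observation is that $[\Lambda,\bar\partial]$ and $-\frac i2\partial^*$ (and likewise the second pair) are both first-order differential operators on forms, so their difference is an operator of order at most one whose coefficients are built algebraically from the metric $g_{j\bar k}$ and its first derivatives. It therefore suffices to show that this difference vanishes at an arbitrary point $p$, after choosing convenient local holomorphic coordinates there.

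First I would treat the model case $M=\mathbb C^n$ with the standard metric, where $\omega=\frac i2\sum_j dz_j\wedge d\bar z_j$ has constant coefficients. Introduce the exterior multiplication operators $e_j=dz_j\wedge\bullet$ and $\bar e_j=d\bar z_j\wedge\bullet$ together with their pointwise adjoints $e_j^*,\bar e_j^*$, the contraction operators; these obey the Clifford-type relations $e_je_k^*+e_k^*e_j=\delta_{jk}$, $\bar e_j\bar e_k^*+\bar e_k^*\bar e_j=\delta_{jk}$, with all mixed anticommutators vanishing. In these terms $\partial=\sum_j e_j\,\partial_{z_j}$, $\bar\partial=\sum_j \bar e_j\,\partial_{\bar z_j}$, $\lambda=\frac i2\sum_k e_k\bar e_k$, and $\Lambda=\lambda^*=-\frac i2\sum_k \bar e_k^* e_k^*$ is purely algebraic with constant coefficients. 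The heart of the model computation is the algebraic commutator $[\Lambda,\bar e_j]$: repeatedly applying the anticommutation relations collapses the sum to $[\Lambda,\bar e_j]=\frac i2 e_j^*$, and symmetrically $[\Lambda,e_j]=-\frac i2\bar e_j^*$. Since $\Lambda$ commutes with the constant-coefficient derivations $\partial_{z_j},\partial_{\bar z_j}$, we get $[\Lambda,\bar\partial]=\sum_j[\Lambda,\bar e_j]\,\partial_{\bar z_j}=\frac i2\sum_j e_j^*\partial_{\bar z_j}$. Combining this with the description of the formal adjoint via integration by parts, which gives $\partial^*=-\sum_j e_j^*\partial_{\bar z_j}$ on flat space, yields $[\Lambda,\bar\partial]=-\frac i2\partial^*$ on the model. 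The second identity follows either by the same manipulation applied to $[\Lambda,e_j]$ and $\bar\partial^*$, or by complex conjugation of the first (using that $\lambda$, hence $\Lambda$, is real).

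The transfer from the flat model to a general K\"ahler manifold is the main obstacle and the only place the K\"ahler hypothesis $d\omega=0$ enters. I would invoke the existence of K\"ahler normal coordinates: around any $p$ there are holomorphic coordinates in which $\omega=\frac i2\sum_{j,k}(\delta_{jk}+O(|z|^2))\,dz_j\wedge d\bar z_k$, so that $g_{j\bar k}(p)=\delta_{jk}$ and $dg_{j\bar k}(p)=0$. The nontrivial content is that this second-order osculation to the Euclidean metric is possible precisely because $\omega$ is closed; this is the lemma I would isolate and either prove directly or cite. Granting it, at the point $p$ the operators $\Lambda,\lambda,\partial,\bar\partial,\partial^*,\bar\partial^*$ and their relevant first derivatives coincide with their flat counterparts, because the difference between the curved and flat expressions is a polynomial in the vanishing quantities $g_{j\bar k}(p)-\delta_{jk}$ and $\partial g_{j\bar k}(p)$. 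Hence the operator identity $[\Lambda,\bar\partial]+\frac i2\partial^*$, being order at most one with coefficients vanishing at $p$, is zero at $p$; as $p$ was arbitrary it vanishes identically, and the same argument disposes of the second identity.
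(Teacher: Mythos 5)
The paper itself offers no proof of this lemma---its ``proof'' is the citation \emph{C.F.\ Griffiths--Harris}---so the relevant comparison is with the argument in that source, and your proposal is exactly that argument: compute $[\Lambda,\bar e_j]$ and $[\Lambda,e_j]$ algebraically on flat $\mathbb C^n$ using the anticommutation relations for $e_j,e_j^*,\bar e_j,\bar e_j^*$, identify the flat-space adjoints by integration by parts, and then transfer to a general K\"ahler manifold via the second-order osculation lemma (K\"ahler normal coordinates $g_{j\bar k}=\delta_{jk}+O(|z|^2)$, which exist precisely when $d\omega=0$), using the observation that both sides are first-order operators whose coefficients are universal expressions in the metric and its first derivatives. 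You correctly isolate the osculation lemma as the only place the K\"ahler hypothesis enters, and the first-order-operator reduction is sound. This is the standard and intended proof.

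One point you should not gloss over: carried to completion, your own computation establishes $[\Lambda,\partial]=+\frac i2\bar\partial^*$, with a \emph{plus} sign, not the $-\frac i2\bar\partial^*$ asserted in the statement. Indeed, your (correct) commutator $[\Lambda,e_j]=-\frac i2\bar e_j^*$ combined with the flat-space adjoint $\bar\partial^*=-\sum_j\bar e_j^*\partial_{z_j}$ yields $[\Lambda,\partial]=-\frac i2\sum_j\bar e_j^*\partial_{z_j}=+\frac i2\bar\partial^*$; and your alternative route, conjugating the first identity, gives the same plus sign, since $\Lambda$ is a real operator and $\overline{\partial^*\bar\alpha}=\bar\partial^*\alpha$. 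The two Hodge identities must carry opposite signs, so the statement as printed in the paper (both with $-\frac i2$) is internally inconsistent and contains a sign error; this is a defect of the paper, not of your method, and it is harmless downstream because only the first identity is used (in the proof of Lemma \ref{nak}). Still, when you write that the second identity ``follows either by the same manipulation \dots or by complex conjugation,'' you are asserting agreement with a formula that both of your proposed routes in fact contradict; a complete write-up should carry out one of them, obtain the plus sign, and note the discrepancy explicitly rather than claim the stated identity follows.
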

\begin{proof}
	C.F. \autocite{GrHa}.
\end{proof}
\begin{lemma}[Nakano Identity]\label{nak}
	On $\Omega^{\bullet, \bullet}(L)$ for a line bundle $L$ the operator
	$\bar\partial$ is well defined, and we have: 
	\[[\Lambda, \bar\partial] = -\frac{i}2(D^{\partial})^*\coloneq
	i(\star_{E^*}\circ D^{\partial}_{E^*}\circ \star_{E})\]
	Where $D_{E^*}$ is the connection on $E^*$ induced by the Chern connection
	$D$ on $E$. 
\end{lemma}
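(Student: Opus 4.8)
The plan is to reduce the twisted identity to the untwisted Hodge identity $[\Lambda,\bar\partial] = -\frac i2\partial^*$ already recorded above. The essential observation is that both $[\Lambda,\bar\partial]$ and the formal adjoint $(D^\partial)^*$ are first-order differential operators on $\Omega^{\bullet,\bullet}(L)$: the operator $\Lambda$ is algebraic, so the commutator inherits the first order of $\bar\partial$, and $(D^\partial)^*$ is first order as well. Consequently, to prove the two sides agree it suffices to check that they produce the same section when evaluated at an arbitrary point $p_0\in M$, applied to an arbitrary local $L$-valued form. This pointwise reformulation is what lets us exploit a judicious choice of local frame.

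First I would fix $p_0$ and construct a \textbf{normal holomorphic frame} $e$ for $L$ near $p_0$: starting from any holomorphic frame and rescaling by a suitable holomorphic function, one arranges $h = \langle e,e\rangle$ to satisfy $h(p_0)=1$ and $\partial h(p_0)=0$, hence $dh(p_0)=0$ and the connection form $\theta = h^{-1}\partial h$ vanishes at $p_0$ (recall $\theta = h^{-1}\partial h$ from the Chern connection). Every $L$-valued form is locally $e\otimes\phi$ for an ordinary form $\phi$. Because $e$ is holomorphic, $\bar\partial(e\otimes\phi) = e\otimes\bar\partial\phi$ and $\Lambda(e\otimes\phi)=e\otimes\Lambda\phi$ (the operator $\Lambda$ never touches the bundle factor), so that
\[
[\Lambda,\bar\partial](e\otimes\phi) = e\otimes[\Lambda,\bar\partial]\phi = e\otimes\left(-\tfrac i2\,\partial^*\phi\right)
\]
by the scalar Hodge identity; this holds throughout the chart, and in particular at $p_0$. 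This disposes of the left-hand side.

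The heart of the argument is to show that at the distinguished point $p_0$ the twisted adjoint collapses to the scalar one, i.e. $(D^\partial)^*(e\otimes\phi)\big|_{p_0} = e\otimes\partial^*\phi\big|_{p_0}$. Writing $D^\partial(e\otimes\phi) = e\otimes(\partial\phi + \theta\wedge\phi)$, the adjoint splits into the adjoint of the $\partial$-part and the adjoint of the algebraic operator $\theta\wedge(\,\cdot\,)$. The latter is again algebraic, so its value at $p_0$ depends only on $\theta(p_0)=0$ and contributes nothing. For the $\partial$-part I would integrate by parts against the Hermitian inner product $(\alpha,\beta)=\int_M\langle\alpha,\beta\rangle\Phi$ using Stokes' theorem; the discrepancy with the scalar adjoint $\partial^*$ is a zeroth-order term proportional to $\partial h$, which vanishes at $p_0$ by the normal-frame condition $dh(p_0)=0$. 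Combining, both sides equal $e\otimes(-\frac i2\partial^*\phi)$ at $p_0$, and since $p_0$ was arbitrary the operator identity holds globally. The accompanying formula $(D^\partial)^* = -2\,\star_{E^*}\circ D^\partial_{E^*}\circ\star_E$, which justifies the notation, is established by the same Stokes computation; here one tracks the conjugation built into $\star_E$, which is exactly what forces the dual connection $D^\partial_{E^*}$ to appear.

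The main obstacle I anticipate is precisely this bookkeeping in the third step: confirming that the weight $h$ and the connection form $\theta$ genuinely decouple at the normal point, so that the only surviving contribution is the scalar $\partial^*$. One must be careful that the adjoint of a first-order operator at $p_0$ can in principle feel first derivatives of its coefficients; the saving grace is that the corrections are organized so only $\theta(p_0)$ and $(\partial h)(p_0)$ — both zero — intervene, while \emph{no} derivatives of $\theta$ enter because $\theta\wedge(\,\cdot\,)$ is purely algebraic. Verifying the existence of the normal frame and pinning down the signs and conjugations in the $\star_{E^*}/D^\partial_{E^*}$ formula are the remaining, more routine, technical points.
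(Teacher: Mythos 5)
Your proposal is correct and follows essentially the same route as the paper: both reduce the twisted identity to the scalar Hodge identity $[\Lambda,\bar\partial]=-\tfrac i2\partial^*$ by working in a local frame, observe that the discrepancy between the two sides is a purely algebraic (zeroth-order) term in the connection data, and kill that term at an arbitrary point by a suitable choice of frame. Your normal holomorphic frame construction (arranging $h(p_0)=1$, $dh(p_0)=0$, hence $\theta(p_0)=0$) is precisely the rigorous version of the paper's step of ``picking local coordinates such that the right hand side vanishes at a point,'' so your write-up is, if anything, more explicit about why that choice is available.
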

\begin{proof}
	 We can verify this property locally. Begin with a section $\eta$, by
	 writing $\eta = e\otimes n$ for a local frame $e$. Then $\bar\partial \eta
	 = D^{\bar\partial}(\eta)= e\otimes \bar\partial n + ne \otimes \theta^{(0,
	 1)}$, where $\theta^{(0, 1)}$ is the antiholomorphic portion of $\theta$.
	 Furthermore, we have $\Lambda\eta = e\otimes \Lambda(n)$. Then
	 \[[\Lambda, \bar\partial]\eta = e\otimes [\Lambda, \bar\partial](n) +
	 e\otimes [\Lambda, \theta^{(0, 1)}](\eta)\]
	 By the Hodge identities, then,
	 \[[\Lambda, \bar\partial]\eta = e\otimes \left(-\frac i2\right)
	 \partial^*n+
	 e\otimes [\Lambda, \theta^{(0, 1)}](\eta)\]
	 But we can likewise write $D^{\partial}(\eta)$ as $e\otimes \partial n +
	 ne\otimes \theta^{(1, 0)}$;  
	 ``taking the adjoint'' of this equivalent characterization for $D$, (which works as the Hodge $\star$
	 is linear) yields
	 \[(D^{\partial})^*(\eta) = e\otimes \partial^* n + \eta\otimes (\theta^{(1,
	 0)})^*\]  
	 Then 
	 \[[\Lambda, \bar\partial](\eta) + \frac i2 (D^{\partial})^*(\eta) =  e\otimes
	 \left(-\frac i2\right)\partial^*n +
	 e\otimes [\Lambda, \theta^{(0, 1)}](\eta)+ \left(-\frac i2\right)e\otimes \partial^* n + \left(-\frac i2\right)\eta\otimes (\theta^{(1,
	 0)})^*\]
	 \[=e\otimes[\Lambda, \theta^{(0, 1)}](\eta) + \left(-\frac i2\right)\eta\otimes(\theta^{(1, 0)})^*\]
	 But the left hand side of this equation is globally and intrinsically
	 defined; as such,the right is as well. Picking local coordinates such that
	 the right hand side vanishes at a  point (which we can do for any point),
	 we see that 	 \[[\Lambda, \bar\partial] + \frac i2 (D^{\partial})^*  =
	 0,\] and we are done. 
\end{proof}
\begin{remark}
	Note that $D = D^{\partial}+ D^{\bar\partial}$, where
	$D^{\bar\partial}$ satisfies $D^{\bar\partial}(s\otimes\sigma) =
	\sigma\otimes\bar\partial s + s\wedge D^{\bar\partial}\sigma$. But
	$D^{\bar\partial} = \bar\partial$ by  compatibility, so
	$D^{\bar\partial}(s\otimes \sigma) = \sigma\otimes \bar\partial s +
	s\wedge \bar\partial \sigma$. But this is just the product rule for $\bar\partial$,
	and we have that $D^{\bar\partial} = \bar\partial$. 
\end{remark}
\begin{remark}\label{otherone}
	The identity $[\Lambda, \bar\partial] = -[\bar\partial, \Lambda]$ implies
	that $[\bar\partial, \Lambda] = \frac i2(D^{\partial})^*$. 
\end{remark}
\subsection{Statement and Proof}
\begin{theorem}[Kodaira-Nakano Vanishing Theorem]
	Let $M$ be a compact, Kähler manifold and $L\to M$ a positive line bundle.
	Then the cohomology
		\[\check H^q(M, \Omega^p(L)) = 0\]
		Whenever $p+q>n$. 
\end{theorem}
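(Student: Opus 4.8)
The plan is to combine the Hodge Theorem with a curvature (Bochner--Kodaira--Nakano) identity and the positivity of $L$. First I would use the Hodge Theorem to replace the Čech cohomology group $\check H^q(M, \Omega^p(L))$ by the space $\mathcal H^{(p,q)}(L)$ of $\bar\partial$-harmonic $L$-valued $(p,q)$-forms; it then suffices to show $\mathcal H^{(p,q)}(L) = 0$ whenever $p + q > n$. The second preliminary move is to fix the Kähler metric cleverly: since $L$ is positive, $\frac{i}{2\pi}\Theta$ is a positive $(1,1)$-form, and because $\Theta = d\theta$ locally (the Cartan structure equation) it is $d$-closed, so it is itself a Kähler form. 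I would therefore take $\omega = \frac{i}{2\pi}\Theta$ as the metric defining $\Lambda$, $\lambda$, and the Hodge star, which forces $i\Theta = 2\pi\,\omega$; equivalently, the curvature operator $i\Theta\wedge(\cdot)$ equals $2\pi\lambda$.

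The technical core is to establish the Bochner--Kodaira--Nakano identity relating the two Laplacians,
\[\Delta_{\bar\partial} = \Delta_{D^\partial} + [i\Theta, \Lambda],\]
where $\Delta_{\bar\partial} = \bar\partial\bar\partial^* + \bar\partial^*\bar\partial$ and $\Delta_{D^\partial} = D^\partial (D^\partial)^* + (D^\partial)^* D^\partial$. To derive this I would start from the Nakano identity (Lemma \ref{nak}), which expresses $(D^\partial)^*$ as a commutator of $\Lambda$ with $\bar\partial$, substitute it into $\Delta_{\bar\partial}$, and then reorganize the resulting triple commutators using the graded Jacobi identity. The curvature enters precisely through the term $[\bar\partial, D^\partial] = D^2 = \Theta$, since for the Chern connection the $(2,0)$ and $(0,2)$ parts of $D^2$ vanish; all remaining terms assemble into $\Delta_{D^\partial}$ plus $[i\Theta, \Lambda]$. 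I expect this step to be the main obstacle, both because the Jacobi-identity bookkeeping with graded (anti)commutators is delicate and because one must carry the factors of $\tfrac{i}{2}$ from our conventions through to the end without error.

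With the identity in hand the conclusion is short. For $\eta \in \mathcal H^{(p,q)}(L)$ we have $\Delta_{\bar\partial}\eta = 0$, so pairing against $\eta$ and using the identity gives $0 = \|D^\partial \eta\|^2 + \|(D^\partial)^* \eta\|^2 + ([i\Theta, \Lambda]\eta, \eta)$, where the first two terms are nonnegative. Finally I would evaluate the curvature term: since $i\Theta = 2\pi\omega$ we have $[i\Theta, \Lambda] = 2\pi[\lambda, \Lambda] = -2\pi[\Lambda, \lambda]$, and Lemma \ref{woah} gives $[\Lambda, \lambda] = (n - (p+q))\id$ on $\Omega^{(p,q)}(L)$, so $([i\Theta,\Lambda]\eta, \eta) = 2\pi(p + q - n)\|\eta\|^2$. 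When $p + q > n$ this coefficient is strictly positive, so three nonnegative terms can sum to zero only if $\eta = 0$. Hence $\mathcal H^{(p,q)}(L) = 0$, and the Hodge isomorphism yields $\check H^q(M, \Omega^p(L)) = 0$, as desired.
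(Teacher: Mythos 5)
Your proposal is correct, and it rests on the same two pillars as the paper's proof --- the Hodge-theoretic reduction to $\mathcal H^{(p,q)}(L)$, the choice $\omega = \frac{i}{2\pi}\Theta$ so that the curvature operator becomes $2\pi\lambda$, the Nakano identity (Lemma \ref{nak}), and the counting identity $[\Lambda,\lambda] = (n-(p+q))\id$ (Lemma \ref{woah}) --- but your technical core is genuinely different. You propose first proving the full operator-level Bochner--Kodaira--Nakano identity $\Delta_{\bar\partial} = \Delta_{D^\partial} + [i\Theta,\Lambda]$ via the graded Jacobi identity and $[\bar\partial, D^\partial] = D^2 = \Theta$, and only then evaluating on a harmonic form. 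The paper never proves or invokes that identity: it exploits harmonicity immediately, using $\bar\partial\eta = \bar\partial^*\eta = 0$ to reduce $D^2\eta$ to $\bar\partial D^\partial\eta$, and then computes the two pairings $2i(\Lambda\Theta\eta,\eta) = \|D^\partial\eta\|^2 \geq 0$ and $2i(\Theta\Lambda\eta,\eta) = -\|(D^\partial)^*\eta\|^2 \leq 0$ separately, each by a single application of Lemma \ref{nak}, before subtracting --- in effect the BKN identity paired against a harmonic form, with the cross-terms killed early so the Jacobi-identity bookkeeping you (rightly) flag as delicate never arises. Your route costs more up front but buys a reusable identity that holds on all forms, not just harmonic ones, and is the standard path to Nakano vanishing for higher-rank bundles; the paper's route buys brevity given its lemmas. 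Two small credits and one caution: you correctly observe that $d\omega = 0$ follows from $\Theta = d\theta$ locally, a point the paper needs but glosses over when it fixes the metric; and your final inequality correctly yields vanishing for $p+q > n$ (the paper's closing line has a typo, asserting $p+q=n$ forces $\eta = 0$). The caution is that with the paper's $\frac{i}{2}$-normalized Hodge identities the BKN identity will not come out with exactly the constant you wrote --- a factor of $\frac12$ propagates through --- but since only the sign of the curvature term $2\pi(p+q-n)\|\eta\|^2$ drives the conclusion, and you explicitly flagged the need to track these constants, this does not affect the correctness of the plan.
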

\begin{proof}
	By assumption, $\frac{i}{2\pi}\Theta$ is a positive $(1, 1)$ form; thus it induces a
	Hermitian metric on $M$, which we fix. Furthermore, we then have
	$\lambda$ is wedging with $\frac{i}{2\pi}\Theta$ by definition. By
	Hodge, it suffices to show that there are no
	nonzero harmonic forms in $\mathcal H^{(p,q)}(L)$ for $p+q>n$. 
	To this end, let $\eta\in
	\mathcal H^{(p, q)}(L)$ be a harmonic form. Since $\eta$ is a representitive
	of $H^{(p, q)}(E)$, it is $\bar\partial$-closed. 

	Let $e$ be a frame for $L$.
	Then we have $D^2(\eta) = D(D^\partial(\eta)) + D(\bar\partial(\eta))$;
	but the second term goes to zero. We can decompose $D$ into $D^{\partial}$
	and $D^{\bar\partial}$ in the same way; we then have:
	\[D^2(\eta) = D^{\partial}(D^{\partial}(\eta)) + D^{\partial}(D^{\bar\partial}(\eta))
	+ D^{\bar\partial}(D^{\partial}(\eta)) +
	D^{\bar\partial}(D^{\bar\partial}(\eta))\]
	But recall that $D$ is compatible with the complex structure, so 
	\[D^2(\eta) = D^{\partial}(D^{\partial}(\eta)) + D^{\partial}({\bar\partial}(\eta))
	+ D^{\bar\partial}(D^{\partial}(\eta)) +
	D^{\bar\partial}({\bar\partial}(\eta))\]
	Since $D^2$ has type $(1, 1)$ and by compatibility, we have 
	\[D^2(\eta) =\bar\partial(D^{\partial}(\eta)) +
	D^{\partial}(\bar\partial)(\eta) \]
	And as $\eta$ is closed,
	\[D^2(\eta) = {\bar\partial}(D^{\partial}(\eta)) \]
	Meaning $\eta\otimes\Theta = D^{\bar\partial}(D^{\partial}(\eta))$, so 
	\[2i((\Lambda\Theta)\eta, \eta) = 2i(\Lambda
	\bar\partial D^{\partial}(\eta), \eta)\]
	By Lemma \ref{nak}, this is:
	\[=2i\left(\left(\bar\partial\Lambda- \frac i2
	(D^{\partial})^*\right)D^{\partial}\eta, \eta\right)\]
	We can rewrite $(\bar\partial\Lambda D^{\partial}\eta, \eta)$ as
	$(\Lambda D^{\partial}\eta, (\bar\partial)^*\eta)$. But
	$(\bar\partial)^*\eta = 0$, as \[0 = (\Delta \eta, \eta) =
	(\bar\partial\bar\partial^* \eta, \eta) + (\eta, \bar\partial^*\bar\partial)
	= (\bar\partial^*\eta, \bar\partial^*\eta) + (\bar\partial\eta,
	\bar\partial\eta) \]
	Implies (by the positive-definiteness of the inner product we have defined)
	that both $\bar\partial^*\eta$ and $\bar\partial\eta$ are 0. 
	But then we have
	\[2i\left(\left(\bar\partial\Lambda- \frac i2
	(D^{\partial})^*\right)D^{\partial}\eta, \eta\right)
	=2i\left(-\frac i2 (D^{\partial})^*D^{\partial}\eta, \eta\right)
	=\left((D^{\partial})\eta,D^{\partial} \eta\right) \geq 0\]
	By the positive-definiteness of the inner product. Similarly,
	\[2i(\Theta\Lambda\eta, \eta) = 2i(D^{\partial}\bar\partial\Lambda\eta,	
	\eta) + 2i(\bar\partial D^{\partial}\Lambda(\eta), \eta)\]
	The right most term is $2i(D^{\partial}\Lambda(\eta), \bar\partial^*\eta)
	=0$, and so again by Lemma \ref{nak},
	\[= 2i(D^{\partial}\left(\Lambda\bar\partial + \frac
	i2(D^\partial)^*\right)\eta, \eta) \]
	Since $\Lambda\bar\partial \eta$ vanishes, we have
	\[= 2i\left(D^{\partial}\frac
	i2(D^\partial)^*, \eta\right) \]
	By definition of adjoints:
	\[= -\left( (D^\partial)^*\eta, (D^{\partial})^*\eta\right) \leq 0\]
	But then we have that 
	\[2i(\Lambda\Theta\eta, \eta) - 2i(\Theta\Lambda\eta, \eta)\geq 0\]
	Or equivalently
	\[2i([\Lambda, \Theta], \eta) \geq 0\]
	But recall that $\Theta = \frac{2\pi}i \lambda$, so we obtain
	\[4\pi([\Lambda, \lambda]\eta, \eta) \geq 0\]
	But by Lemma \ref{woah}, we obtain
	\[4\pi((n-(p+q)\eta, \eta) \geq 0\]
	But $(\eta, \eta)$ is positive unless $\eta = 0$; as such, we obtain that $p+q=n$ implies
	$\eta =0$. 
\end{proof}

This concludes our proof of the Kodaira vanishing theorem. We now discuss a
notion from algebraic geometry, blowing up, which we will use as a way to turn a
point on a manifold into a divisor and thereby apply the theory of positive line bundles.
\section{Divisors}
	\begin{definition}An \textbf{analytic subvariety} of a manifold $M$ is a subset
		$V$ of $M$ where for every point $p$ in $V$ there is a neighborhood $U$ of
		$p$ in $V$ which is the zero set of a finite collection of holomorphic
		functions. A \textbf{smooth point} on an analytic subvariety $V$ of a manifold
		$M$ is a point with a neighborhood which is a submanifold of $M$. The set of
		all smooth points of $V$ is denoted $V^*$. 
	\end{definition}
	\begin{definition}
		An \textbf{analytic hypersurface} is an analytic subvariety defined locally by one
		function. 
	\end{definition}
	We use a definition of irreducible which is not quite standard, but equivalent
	to the standard definition:
	\begin{definition}
		An analytic variety $V$ is \textbf{irreducible} if $V^*$ is connected. 
	\end{definition}
	If an analytic variety is irreducible, it's \textbf{dimension} is the dimension
	of $V^*$ as a complex manifold. If a general analytic variety has dimension $k$
	irreducible components, we say it is of dimension $k$. A codimension $k$
	subvariety of an $n$ dimensional manifold is a dimension $n-k$ subvariety. 
	%	\begin{lemma}
	%		A codimension $k$ subvariety is locally the locus of $k$ functions on $M$. 
	%	\end{lemma}
	%	\begin{proof}
	%		Implicit function theorem. 
	%	\end{proof}
	\begin{definition}
		If $V$ is an irreducible analytic hypersurface of $M$ with locally defining
		functions $f_\alpha$, and $g$ is a holomorphic function on $M$, we define the
		\textbf{order} $\ord_{V,p}(f)$ of $f$ along $V$ at a point $p$ on $M$ to be
		the greatest integer $a$ such that $g_\alpha = f^ah$ for $g_\alpha$ a
		defining function of $V$ at $p$. 
	\end{definition}
	A standard result from complex analysis states that this order is independent of
	the choice of $f_\alpha$ or of $p$; as such, we drop the $p$ and focus solely on
	the order of a function along a hypersurface. It follows that $\ord_V(fg)=
	\ord_V(f)+\ord_V(g)$. We extend this notion to
	meromorphic functions; if $f$ can be written locally as $\frac gh$ for $g, h$
	holomorphic and relatively prime, we define
	$\ord_V(f) = \ord_V(g) - \ord_V(h)$. 
	\begin{definition}
		A \textbf{divisor} on a manifold $M$ is a locally finite formal linear
		combination with integer coefficients of
		irreducible analytic hypersurfaces of $M$. 
	\end{definition}
	Recall that a codimension 1 subvariety $V$ is defined locally as the locus of a
	single holomorphic function; we can construct an open cover $\{U_\alpha\}$ of $M$ with functions
	$f_\alpha$ such that $V\cap U_\alpha = \{x\in U_\alpha | f_\alpha(x) = 0\}$. 
	As such, given a divisor 
		\[D = \sum_{\{V_i\}}a_iV_i\]
	By taking refinements of the open cover $\{U_\alpha\}$ associated to each $V_i$
	we can obtain an open cover $\{U_{\alpha}\}$ together with functions
	$\{g_{\alpha, i}\}$ where $V_i\cap U_{\alpha} =
	\{x\in U_{\alpha} | g_{\alpha, i}(x) = 0\}$. We call these functions
	\textbf{local defining functions} for the divisor $D$. Recall that the quotient
	sheaf $\mathfrak M^*/\mathcal O^*$ is the sheaf of functions who
	can be written locally as ratios of holomorphic functions, where we identify
	meromorphic functions which differ by a holomorphic multiple. We let 
	\[f_\alpha = \prod g_{\alpha, i}^{a_i}\]
	Which is a meromorphic function over $U_{\alpha}$. But this defines everywhere a
	function given locally as a ratio of two holomorphic functions; furthermore, on
	the intersections these functions differ only by a multiple of a holomorphic
	function. As such, we can
	glue together these $f_\alpha$ to obtain a global section of the quotient sheaf
	$\mathfrak M^*/\mathcal O^*$. 
	
	Conversely, for $\{f_\alpha\}$ a global section of the quotient sheaf $\mathfrak
	M^*/\mathcal O^*$ written locally as meromorphic functions modulo holomorphic
	multiples, we can associate to $f$ a divisor:
	\[D = \sum_{V_i\subset M}\ord_V(f_\alpha)V_i\]
	Where $V_i$ is an irreducible hypersurface of $M$.
	If there's any other representative $\{g_\beta\}$ of the same section of the
	quotient sheaf, we then have $f_\alpha/g_\beta\in\mathcal O^*(U_\alpha)$. But
	if this is the case, then $\ord(f_\alpha) - \ord(g_\beta) = 0$, because the
	order of any element of $\mathcal O^*(U)$ along any irreducible hypersurface $V$
	for any open set $U$ is zero (as in order for a holomorphic
	function to have a holomorphic inverse on an open set $U$, it must never be zero
	on the set $U$; in particular, if $f$ vanishes on any point of $U$, that
	holomorphic function can never be written as $hf^k$ for $k>0$, so the order must
	be zero). Equivalently, $\ord(f_\alpha) = \ord(g_\beta)$, and so $D$ is
	well-defined given $\{f_\alpha\}$.  
	
	This process works just as well for meromorphic functions on the manifold $M$; given
	a meromorphic function $f$, we define
	\[(f) =\sum_{V_i\subset M}\ord_V(f_\alpha)V_i\] 
	
	We have now defined a divisor given a section of $\mathfrak
	M^*/\mathcal O^*$, and conversely, associated a section of $\mathfrak
	M^*/\mathcal O^*$ to a divisor. These operations are also
	inverses of each other; letting $D$ be a divisor, taking $f_\alpha$ to be the
	product of the $g_{\alpha, i}^{a_i}$ where the $g_{\alpha, i}$ are defining
	functions of the hypersurfaces in the variety and the $a_i$ are the
	multiplicities of the hypersurfaces in the variety, we obtain that the divisor
	associated to this $f_\alpha$ is exactly the sum over all the irreducible
	hypersurfaces $V_i$ of $M$ of the order of $f_\alpha$ along $V$ times $V_i$; the
	order of $f_\alpha$ along $V$ is positive if and only if $f_\alpha$ is zero
	along $V$, with multiplicity $a_i$, and negative if and only if $1/f_\alpha$ is
	zero along $V$, with multiplicity $-a_i$. From this we recover the original divisor
	$D$. 
	
	We can describe a similar correspondence between divisors and line bundles. Let
	$D$ be a divisor with locally defining functions $f_\alpha$. Then the functions 
	$g_{\alpha\beta} = \frac{f_\alpha}{f_\beta}$ are holomorphic and nonzero on the
	intersection $U_\alpha\cap U_\beta$; furthermore, we have by definition the
	cocycle conditions
	\[ \begin{cases}
			g_{\alpha\beta}\cdot g_{\beta\alpha} =
			\frac{f_\alpha}{f_\beta}\frac{f_\beta}{f_\alpha} = 1\\
			g_{\alpha\beta}\cdot g_{\beta\gamma}\cdot g_{\gamma\alpha} =
			\frac{f_\alpha}{f_\beta}
			\frac{f_\beta}{f_\gamma}
			\frac{f_\gamma}{f_\alpha} = 1
		\end{cases}
	\]
	Which are exactly the conditions for transition maps for a line bundle, which
	(as we saw above) define a unique line bundle. Furthermore, if $f'_\alpha$ is a
	different locally defining function for $D$, then we define new
	$g'_{\alpha\beta} = \frac{f'_\alpha}{f'_\beta}$. However, these alternate
	$g'_{\alpha\beta}$ will differ from $g_{\alpha\beta}$ by a holomorphic function
	with an inverse, and thus define the same line bundle by the discussion above. 
	We call this well-defined line bundle the \textbf{associated line bundle} to the
	divisor $D$, and denote it $[D]$. 
	
	It is immediate from this description that for any divisors $D$ and $D$ defined
	locally by $f_{\alpha}$ and $f'_{\alpha}$, we have that $D+D$ is defined
	locally on $U_{\alpha_1}\cap U_{\alpha_2}$ by $f_{\alpha_1} f'_{\alpha_2}$, and
	so $[D+D]$ is given by $\frac{f_{\alpha_1}f_{\alpha_2}}{f_{\beta_1}f_{\beta_2}}$. But
	the line bundle $[D]\otimes[D]$ is given by the transition functions
	$\frac{f_{\alpha_1}}{f_{\beta_1}}\otimes \frac{f_{\alpha_2}}{f_{\beta_2}} 
	=\frac{f_{\alpha_1}}{f_{\beta_1}}\frac{f_{\alpha_2}}{f_{\beta_2}}$, and so
	$[\bullet]$ is a group homomorphism into the \textbf{Picaird group}, the group
	of line bundles under the tensor product.  
	
	Our next goal is to adapt this machinery from the case of functions on $M$ to
	the case of line bundles and sections of line bundles. Recall that, given any
	line bundle $L$, we denote by $\mathcal O(L)$ the sheaf of holomorphic sections
	of $L$. This has the structure of a $\mathcal O$-module; clearly, it is an
	abelian group (sections can be added pointwise); moreover, pointwise
	multiplication by a holomorphic function gives another holomorphic section. We
	base-change this module by tensoring with the sheaf of meromorphic functions,
	also an $\mathcal O$-module, to obtain the meromorphic sections of $L$:
	\[\mathfrak M(L) \coloneq \mathcal O(L)\otimes_{\mathcal O} \mathfrak M\]
	Moreover, just as holomorphic sections on an open set $U$ can be specified locally by
	holomorphic functions $s_\alpha
	\colon U\cap U_\alpha\to  \mathbb C$ with $s_\alpha = g_{\alpha\beta}s_\beta$ on
	$U\cap U_\alpha\cap U_\beta$ for $g_{\alpha\beta}$ an appropriate (holomorphic) transition function,
	meromorphic sections on $U$ can be specified locally as meromorphic functions
	$s_\alpha\colon U\cap U_\alpha \to \mathbb C$, related by
	holomorphic transition functions. From this it follows that the quotient of two
	local definitions of meromorphic functions is a holomorphic function, meaning
	that the order of a meromorphic section along an irreducible hypersurface is well-defined and invariant globally. As such, we
	define the order of a meromorphic section $s$ along an irreducible hypersurface $V$
	to be $\ord_V(s) = \ord_V(s_\alpha)$, where $U_\alpha$ intersects $V$. This
	allows us to define divisors associated to sections of line bundles; given a
	section $s$ of a line bundle $L$, we define a divisor:
	\[(s) = \sum_{V\subset M}\ord_V(s) V\]
	Where the sum is taken over all irreducible hypersurfaces $V$ of $M$. It follows
	immediately that $s$ is holomorphic if and only if $(s)$ is effective, and
	nonvanishing if and only if $(s)$ is zero.
	
	Suppose a divisor $D$ (or a section of the quotient sheaf $\mathfrak M/ \mathcal
	O$)
	is given locally by meromorphic defining functions $f_\alpha\in
	\mathfrak M(U_\alpha)$ for an appropriate open cover. Recall that $[D]$ is the
	line bundle given by transition functions which are ratios of the locally
	defining functions. Then each $f_\alpha$ is a meromorphic function $f_\alpha
	\colon U_\alpha\to \mathbb C$, and moreover, the $f_\alpha$ are related by transition
	functions $\frac{f_\alpha}{f_\beta}$ which are the holomorphic transitions of
	the line bundle. This gives a meromorphic section $s_f$ of $[D]$. Moreover, the
	divisor $(s_f)$ is 
	\[\sum_{V\subset M} \ord_V(s_f)V\]
	But $s_f$ has order $k$ along a codimension 1 hypersurface if and only if $f$
	has order $k$ along that same hypersurface, so this is exactly $[D]$. Moreover,
	for any line bundle $L$ with a global meromorphic section $s$, the transition
	functions $\frac{s_\alpha}{s_\beta}$ must just be the transition functions of
	$L$, and so $L = [(s)]$. From this we see that given a line bundle $L$, if there
	is a divisor $D$ such that $[D] = L$, we get a meromorphic section $s$ of $L$
	with $(s) = D$. Thus, a line bundle $L$ has a nonzero meromorphic section if and
	only if it is of the form $[D]$ for a divisor $D$. Moreover, it has a
	holomorphic section if and only if it is the line bundle of an effective
	divisor.  
	
	Let $\mathcal L(D)$ for a divisor $D$ denote the set of meromorphic functions
	$f$ such that $D + (f)$ is an effective divisor. Since $(f) = \sum_{V\subset M}
	\ord_V(f)V$, this is the set of all meromorphic functions which are holomorphic
	at any point not in a hypersurface in the divisor and where the order along any
	hypersurface is greater than the coefficient of that hypersurface in the divisor
	$D$. Recall that there is a global meromorphic section $s_0$ of $[D]$ with
	$(s_0) = D$; fix such an $s_0$. Then if $s$ is a global holomorphic section of
	$[D]$, the ratio $f = \frac s{s_0}$ is meromorphic and satisfies $(f) = (s) -
	(s_0) = (s) - D$. Equivalently, $D + (f) = (s)$; but $(s)$ is effective as the
	divisor associated to a holomorphic section, so $f\in \mathcal L(D)$.
	Conversely, if we pick a function $f\in\mathcal L(D)$, then $s = fs_0$ is a
	section of $[D]$ satisfying $(s) = (f) + (s_0) = (f) + D$, which is effective;
	thus $s$ must be a holomorphic section of $[D]$. Thus multiplication by $s_0$ identifies
	$\mathcal L(D)$ with $H^0(M, \mathcal O([D])) = \mathcal O([D])(M)$. 
	Because (by way of the function $(\bullet)$) a global section of a line bundle
	defines a divisor, we can extend this notion to a general line bundle:
	\begin{definition}
		Let $E$ be a linear subspace of $\mathcal
		O(L)(M)$ for some line bundle $L$ over a manifold $M$.  Then the
		\textbf{linear system} of divisors associated to $E$, which we call $|E|$,
		is the set $\{(s) | s\in E\}$.
	\end{definition}
	
	Also important is the notion of \textbf{linear equivalence}:
	
	\begin{definition}
		Let $D$ and $D$ be divisors on a manifold $M$. Then $D$ is \textbf{linearly
		equivalent} to $D$ if $D = D + (f)$ for a {\normalfont holomorphic} function
		$f$. Equivalently, if $[D] = [D]$. We denote linear equivalence by $\sim$.
	\end{definition}
	A short check yields that this is an equivalence relation.
	Recall that a linear subspace of projective space is the zero set of a
	homogeneous polynomial of the form $\sum_{i=0}^k a_ix_i$, and to each vector
	space $V$ we can associate a projective space $\mathbb P(V)$ which is the set of
	lines through the origin of $V$. We denote by $|D|$ the
	set of all effective divisors linearly equivalent to $D$.

\section{Blowing Up}
\begin{remark}We now treat a classical construction called the `Blowing Up' of a point,
	working first locally.
	Consider the subset $B$ of $\mathbb C^n\times \mathbb
	P^{n-1}$ satisfying the set of homogeneous polynomials \[x_iy_j - x_jy_i = 0\] (for points $((x_1, ..., x_n), [(y_1, ...,
	y_n)])$ in $\mathbb C^n\times \mathbb P^{n-1}$).  Define the map $\pi\colon B\to\mathbb
		C^n$ by projection on the first coordinate, Observe that for $\mathbb C-\{0\}$,
		the point $x$ lies on only one line through the origin; however, if $x=0$,
		then $x$ lies on every line through the origin. Thus $\pi$ is an isomorphism
		at every point other than the origin and $\pi^{-1}(0) = \mathbb P^{n-1}$
		is a divisor. 
		We call this construction the \textbf{blowing up of $\mathbb C^n$ at
		the origin}.
\end{remark}
\begin{remark}
		Note that these relations hold
		if and only if $x$ lies on the line that $y$ defines in $\mathbb P^{n-1}$,
		the set of all lines through the origin in $\mathbb C^{n}$. Furthermore,
		in general, this construction will yield a manifold with boundary
		given a manifold. Our theory holds for manifolds with boundary,
		although we don't make much of that explicit. 
\end{remark}
\begin{remark}\label{global}
	We can now perform the same construction in local coordinates on a manifold.
		Pick a point $p\in M$. 
	Restrict the above construction $B$ to a disc $D$ around the origin.
	Pick local coordinates $x_i$ about the point $p$ which are 0 at $p$, and perform the
	construction above for those local coordinates, identifying $D -
	\pi^{-1}(0)$ with the disk in $M$ surrounding $p$.
	This construction is independent of coordinates; if we perform it in both
		coordinates $z_i$ and $z'_i = fz_i$ yielding manifolds $\tilde M$ and
			$\tilde M'$, (letting $E = \pi^{-1}(x)$ and $E'=\pi'^{-1}(x)$, and
			$\tilde U$, $\tilde U'$ be open sets containing $E$ and $E'$ on
			which we have coordinates) there
			is a natural isomorphism induced by the coordinates $f: \tilde
			U-E \to \tilde U'-E$. We can extend this over $E$ by letting $f(x,
			l) = (x, l')$ where $l'_j = \pd[f_j]{z_i}(x)\cdot l_i$.
	\end{remark}
	\begin{definition} 
	We call this construction the \textbf{blowing up of $M$ at a point $x$}
	\end{definition}
	\begin{remark}
		Again, this yields a manifold with boundary. Moreover, we can repeat
		this for multiple disjoint points to yield the blowup at a finite number
		of points. 
	\end{remark}
	\begin{definition}
		If $\tilde M$ is the blowup of $M$ at $x$, we call $E_x = \pi^{-1}(x)$
		the \textbf{exceptional divisor} of $M$. 
	\end{definition}

	\begin{remark}
		Throughout the following, we'll be working a lot with restrictions onto
		points or closed sets. Although these generally aren't defined for a
		sheaf, they are for the ones we're working with in particular. 
	\end{remark}
	\begin{remark}\label{dude}
		Let $U$ be an open set of $M$ containing $x$ on which we can find local
		coordinates $z = (z_1, ..., z_n)$. Then we have 
		\[\tilde U = \pi^{-1}(U) = \{(z, l)\in U\times \mathbb P^{n-1} : z_il_j
		= z_jl_i\}\]
		Let $U_i = \{(z, l)\in \pi^{-1}(U): l_i\neq 0\}$; then we have an open
		cover of $\tilde U$. Moreover, in each $U_i$, we have local coordinates
		\[z_j^i\coloneq \begin{cases}\frac{z_j}{z_i} = \frac{l_j}{l_i} & j\neq
		i\\ z_i &j=i\end{cases}\].  
		These $z_i^j$ define precisely a global section of the quotient sheaf
		$\mathfrak M^*/\mathcal O^*$; they are locally meromorphic functions
		which differ globally by holomorphic multiples. Moreover, 
		the divisor $E$ is then given by $E = (z^i_i)$, so the line bundle $[E]$
		is given in intersections $U_i\cap U_j$ by transition functions
		$\frac{z_i^i}{z_j^j} = z^i_j = \frac{l_j}{l_i}$. As such, we can write
		the fibers of this line bundle in $U_i$ as $[E]_{(z, l)} =
		\{\frac{\lambda}{l_i} (l_1, ..., l_{i-1}, 1, l_{i+1}, ..., l_n) :
		\lambda\in\mathbb C\}$ (clearly this satisfies the transition functions). Equivalently, we can write the line bundle
		everywhere in $U_i$ as $[E]_{(z, l)} = \{{\lambda}(l_1, ..., l_{i-1}, l_i, l_{i+1}, ..., l_n) :
		\lambda\in\mathbb C\}$. Restricted to $E\cong \mathbb P^{n-1}$, this is
		the \textbf{tautological bundle}, the bundle who's fiber over each point
		in $\mathbb P^n$ is the line which that point represents. The dual of
		the tautological bundle is the \textbf{hyperplane bundle}, for which the fiber
		over a line $l\in\mathbb P^n$ is the space of linear functionals from $l$ to
		$\mathbb C$; as such, $[-E] = [E]^*$ is the hyperplane bundle. Then,
		given a function $f$ which vanishes at $x$, the pullback $\pi^*f$ is a
		function which vanishes along all of $E$; that is, locally a section of
		$[-E]$.

		Picking a set of coordinates for $M$ gives us an identification of $E$
		with the projectivization of the tangent space by the map 
		\[(x, l) \mapsto \sum l_i\pd{x_i}\]
		This extends between coordinates; if $x', l'$ are new coordinates we
		have  
		\[(x, l) = (f_1x_1, ..., f_nx_n, g_1l_1, ..., g_nl_n) \mapsto
		\sum_i\sum_j \pd[f_j]{x_i}l_j\pd{x'_j}\sim \sum l_i\pd{x_i}\]
		By the identification on the tangent space. From this we see that there
		is a very natural identification between the projectivization of the
		tangent space and the exceptional divisor. This identification extends
		naturally to the cotangent space; $[-E]$ is the set of fiberwise linear
		functionals from $[E]$ to $\mathbb C$, and when restricted to $E$, is
		thus the set of linear functionals from the tangent space to $\mathbb
		C$. But recall that a section of $[-E]$ is also represented as a
		collection of complex-valued functions satisfying transition functions;
		in particular, picking coordinates for $M$ around $x$ induces
		coordinates for $\tilde M$, including coordinates $\pd{x_i}$ for $E$.
		The pullback of $f$ induces new coordinates $f(x_1),
		..., f(x_n)$ and obeys the transition rules; as such, we must have by
		change of coordinates
		\[\pi^*f|_E = \sum_{i=1}^{n}\frac{\partial f}{\partial x_i}\Big|_{x} dx_i\]
		But this is exactly the differential $df$ evaluated at $x$; from this we obtain the
		commutative diagram in Figure \ref{fig:dig},
	\begin{figure}
		\centerline{
	\xymatrix{
		\check H^0(\tilde M, \mathcal O({[-E]}))
			\ar[r]^{r'_E}&\check H^0(E, \mathcal O([-E]))\\
			\check H^0(M, \mathcal O({\mathscr I_x}))\ar[u]\ar[r]^{d_x}&
			T^*_xM\ar@{=}[u] }}
		\caption{A commutative diagram.}
		\label{fig:dig}
	\end{figure}
	where $d_{x}$ is the restriction of the derivative onto the point $x$, $r'_E$ the
	restriction map $r'_E(f) = \sum n_i f|_{V_i}$ onto the divisor $E = \sum
		n_iV_i$, and $\tilde L$ is the pullback $\pi^*L$. 
	\end{remark}
\begin{lemma}\label{pullback}
		Let $\tilde M$ be the blow up of $M$ at $x$ with projection $\pi$.
		If $K_M$ is the canonical bundle on an $n$-manifold $M$, $\tilde K_M$ the pullback
		$\pi^*(K_M)$, and $K_{\tilde M}$ the canonical bundle on $\tilde M$,
		then $K_{\tilde M} = \tilde K_M\otimes [(n-1)E]$
\end{lemma}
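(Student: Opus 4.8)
The plan is to build a canonical holomorphic bundle morphism $\tilde K_M \to K_{\tilde M}$, view it as a global section of $K_{\tilde M}\otimes \tilde K_M^{-1}$, and compute that its zero divisor is exactly $(n-1)E$; the correspondence between divisors and line bundles established in the previous section then yields the claim. First I would recall that $K_M = \bigwedge^n T^*_\partial M$, so a nonvanishing local holomorphic $n$-form is a frame for $K_M$, and that $\tilde K_M = \pi^* K_M$ carries the pulled-back frames. The differential $d\pi$ induces, by dualizing and taking $n$-th exterior powers fiberwise, a natural holomorphic morphism $\Phi\colon \pi^* K_M \to K_{\tilde M}$, which is simply pullback of top-degree forms. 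Where $\pi$ is a biholomorphism, that is on $\tilde M\setminus E$, the map $d\pi$ is an isomorphism and hence so is $\Phi$. Regarding $\Phi$ as a holomorphic section of $\hom(\tilde K_M, K_{\tilde M}) = K_{\tilde M}\otimes \tilde K_M^{-1}$, its zero divisor is therefore supported entirely on $E$.

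The heart of the argument is the coordinate computation in the charts $U_i$ of Remark \ref{dude}. There the blowup coordinates are $w_i = z^i_i = z_i$ and $w_j = z^i_j = z_j/z_i$ for $j\neq i$, so that $z_i = w_i$ and $z_j = w_j w_i$. Pulling back the frame $dz_1\wedge\cdots\wedge dz_n$ and using $dz_i = dw_i$ together with $dz_j = w_i\,dw_j + w_j\,dw_i$ for $j\neq i$, every term in the expansion that contributes a second $dw_i$ is annihilated by $dw_i\wedge dw_i = 0$, leaving $\pi^*(dz_1\wedge\cdots\wedge dz_n) = w_i^{\,n-1}\, dw_1\wedge\cdots\wedge dw_n$ up to sign. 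Since $dw_1\wedge\cdots\wedge dw_n$ is a frame for $K_{\tilde M}$ over $U_i$, in these frames $\Phi$ is multiplication by $w_i^{\,n-1}$.

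Finally I would interpret this quantity. By Remark \ref{dude} the exceptional divisor satisfies $E = (z^i_i)$, so $E$ is cut out by $w_i$ on $U_i$ and $w_i^{\,n-1}$ is precisely a local defining function for $(n-1)E$. Hence $\Phi$, as a section of $K_{\tilde M}\otimes \tilde K_M^{-1}$, vanishes to order exactly $n-1$ along $E$ and is nonzero elsewhere, so its divisor is $(n-1)E$. By the correspondence proved earlier — a line bundle admitting a global section with divisor $D$ is isomorphic to $[D]$ — we conclude $K_{\tilde M}\otimes \tilde K_M^{-1} = [(n-1)E]$, equivalently $K_{\tilde M} = \tilde K_M \otimes [(n-1)E]$.

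The main obstacle is the coordinate computation above and, slightly more delicately, verifying that the local expressions $w_i^{\,n-1}$ glue into a single global section of $K_{\tilde M}\otimes \tilde K_M^{-1}$ carrying the transition functions of $[(n-1)E]$. This is guaranteed precisely because $\Phi$ is defined intrinsically, as pullback of forms rather than chart by chart: its local representatives automatically transform by the transition functions of $K_{\tilde M}$ relative to those of $\tilde K_M$, and the computation shows that on overlaps these differ exactly by the cocycle $(z^i_j)^{\,n-1} = (l_j/l_i)^{\,n-1}$ defining $[(n-1)E]$ in Remark \ref{dude}.
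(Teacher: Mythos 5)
Your proof is correct, but it takes a genuinely different route from the paper's. The paper argues by direct cocycle comparison: it builds an open cover of $\tilde M$ out of the blowup charts $U_i$ and charts $U_\alpha$ missing $x$, computes the transition functions of $K_{\tilde M}$ as Jacobian determinants on every kind of overlap --- including the blowup-to-blowup overlaps $U_j\cap U_k$, which is the longest computation in that proof --- and then checks that multiplying by the cocycle of $[E]^{\otimes(n-1)}$ recovers the pulled-back cocycle $\pi^*g'_{\alpha\beta}$ of $\tilde K_M$. You instead encode the same geometry in the intrinsic bundle map $\Phi\colon\pi^*K_M\to K_{\tilde M}$ given by pullback of top-degree forms, regard it as a global holomorphic section of $K_{\tilde M}\otimes\tilde K_M^*$, and compute its divisor; the only computation required is the single-chart identity $\pi^*(dz_1\wedge\cdots\wedge dz_n)=w_i^{\,n-1}\,dw_1\wedge\cdots\wedge dw_n$ (which is exactly the Jacobian the paper computes on $U_i\cap U_\alpha$, run in the opposite direction --- and in fact the sign is exactly $+1$, so your ``up to sign'' is overly cautious), after which the conclusion follows from the correspondence $L=[(s)]$ for a line bundle with a nonzero meromorphic section, established in the paper's Divisors section. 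What your route buys: gluing, chart-to-chart consistency, and sign bookkeeping (the paper must argue away factors of $(-1)^n$) are automatic because $\Phi$ is defined without reference to charts, so the $U_j\cap U_k$ computation disappears entirely; moreover, since $\Phi$ visibly vanishes along $E$ rather than acquiring a pole there, the direction of the twist --- $K_{\tilde M}=\tilde K_M\otimes[(n-1)E]$ rather than its inverse --- comes out unambiguously, whereas cocycle bookkeeping with index conventions (as in the paper, whose stated conventions for $g_{i\alpha}$ versus $h_{\alpha i}$ are not entirely consistent) is prone to inversion slips. What it costs: you lean on the divisor--section correspondence and on the fact that $E\cong\mathbb P^{n-1}$ is an irreducible hypersurface with local defining function $w_i$, so that ``order $n-1$ along $E$'' is meaningful in the paper's sense; the paper's purely cocycle-theoretic proof needs none of that machinery. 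Both arguments are complete; yours is shorter and more robust, the paper's more elementary.
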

	\begin{proof}
		We begin by constructing an open cover
		$\{U_x\}\cup\{U_\alpha\}_{\alpha\in A}$, where $x\in U_x$ but $x\not\in
		U_\alpha$ for any $\alpha$, and where the intersections $U_x\cap
		U_\alpha$ lie in a single chart of $M$ (this can always be
		accomplished by shrinking $U_x$ and growing the $U_\alpha$). Let
		$\tilde U_\alpha$ be the pullback of $U_\alpha$ under $\pi$, and let
		$\tilde U_i$ be $U_x\cap \{(x, l) | l_i\neq 0\}$. Let $z_j^i$ be
		coordinates in the second set of open covers, and $w^\alpha_i$
		coordinates in the first. Recall from the universal property of the
		determinant that the transition functions of the canonical bundle may be
		computed as the determinant of the Jacobian of the transition functions
		of the manifold; as such, we seek to compute the transition functions of
		the manifold on this open cover. 

		Firstly recall from Remark \ref{dude} that in $U_j\cap U_k$, we
		have\footnote{We work here, somewhat implicitly, in \textit{two} sets of
		local coordinates, writing the first (the $z_i^j$) in terms of the
		second (the $z_i$, which are coordinates for $M$ rather than $\tilde M$).}
		$z^j_i = \frac{z_i}{z_j}$ for $i\neq j$ and $z^j_j=z_j$. Likewise,
		$z^k_i = \frac{z_i}{z_k}$ for $i\neq k$ and $z^k_k=z_k$. From this we
		can write transition functions in terms of the coordinates on $M$; if we
		write $f_j = (z_1^j, ..., z_n^j)$, then $f^{-1}_j(z_1, ..., z_n) =
		(z_1z_j, ..., z_j, ..., z_nz_j)$, and so  is 
		\[f_k\circ f^{-1}_j(z_1, ..., z_n)=\left(\frac{z_1z_j}{z_k}, ...,
		\frac{z_j}{z_k},...,z_kz_j, ..., \frac{z_nz_j}{z_k}\right)\]
		The $l$-th partial derivative of this is 
		\[\pd[f_k\circ f^{-1}_j]{z_l}=\begin{cases}
			\left(0, ...,
			\frac{z_j}{z_k},...,0\right)\text{  with the $l$-th coordinate
			nonzero,}& l\neq
			j, l\neq k\\
	\left(0, ..., \frac{1}{z_k},...,0\right) \text{  with the $l$-th coordinate
			nonzero,}& l=j\\
			\left(-\frac{z_1z_j}{(z_k)^2}, ..., -\frac{z_j}{(z_k)^2},...,z_j, ...,
			-\frac{z_nz_j}{(z_k)^2}\right) & l=k\\
		\end{cases} \]
		The determinant of this Jacobian is then:
		\[\left(\prod_{l=0, l\neq j, l\neq k}^{n}
		(-1)^{l}\frac{z_j}{z_k}\right)\left(\frac{(-1)^j}{z_k}\right)(-1)^k\left(z_j\right)\]
		\[=(-1)^{j-k+n}\left(\frac{z_j}{z_k}\right)^{n-2}\left(\frac{(-1)^j}{z_k}\right)(-1)^k\left(z_j\right)\]
		\[=(-1)^{n}\left(\frac{z_j}{z_k}\right)^{n-1}\]
		Similarly, in $U_i\cap U_\alpha$ we can write $w_j^\alpha =
		z_iz_j^i = z_j$, yielding functions $f_{i} = (z_1^i, ..., z_n^i)$ and
		$f_\alpha = (w_1^\alpha, ..., w_n^\alpha) = (z_1, ...,  z_n)$. Then
		\[f_i\circ f_\alpha^{-1} = \left(\frac{z_1}{z_i}, ...,z_i,...,
		\frac{z_n}{z_i}\right)\]
		Taking derivatives yields:
		\[\pd[f_i\circ f_\alpha^{-1}]{z_j} =\begin{cases}
		\left(0, ..., \frac{1}{z_i}, ..., 0 \right) & i\neq j\\
		\left(0, ...,1,..., 0\right) & i=j
		\end{cases} \]
		Meaning the Jacobian is
		\[(-1)^j\prod_{i\neq j}^n\frac{(-1)^i}{z_i} =
		\frac{(-1)^{n}}{(z_i)^{n-1}}\]
		Everywhere else the transition functions are the pullback of those on
		$M$. To compile the data we have so far, the transition functions
		$g_{\bullet\bullet}$ for
		the canonical line bundle on $\tilde M$ can be written in terms of
		coordinates $z_i$ on $M$ as:
		\[\begin{cases}
			g_{kj} = (-1)^n\left(\frac{z_j}{z_k}\right)^{n-1}\\
			g_{i\alpha} = \frac{(-1)^n}{(z_i)^{n-1}}\\
			g_{\alpha\beta} = \pi^*g'_{\alpha\beta}
		\end{cases}\]
		For $g'_{\alpha\beta}$ the transition functions of $K_M$ on $M$. Note
		that multiplication by $-1$ is a holomorphic function; as such, we can
		ignore it and retain the same line bundle.
	We already know that $[E]$ is given by transition functions $h_{kj}\frac{z_k}{z_j}$ in $U_j\cap U_k$;
	moreover, it is given by transition functions $h_{\alpha i}=z_i$ in $U_i\cap
	U_\alpha$ and as $h_{\alpha\beta}=1$ everywhere else. But then we have that
	the line bundle $K_{\tilde M}\otimes [(n-1)E] = K_{\tilde M}\otimes
	[E]^{\otimes (n-1)}$ is given by transition functions
		\[\begin{cases}
			e_{kj} = \left(\frac{z_j}{z_k}\right)^{n-1} \cdot
			\left(\frac{z_k}{z_j}\right)^{(n-1)} =1\\
			e_{i\alpha} = \frac{1}{(z_i)^{n-1}}\cdot (z_i)^{n-1}=1\\
			e_{\alpha\beta} = \pi^*g'_{\alpha\beta}\cdot 1 = \pi^*g'_{\alpha\beta}
		\end{cases}\]
		Which is exactly the pullback of the line bundle given by
		$g'_{\alpha\beta}$, and the lemma is proven. 
\end{proof}
\begin{remark}
		There is a natural Kähler metric on projective space $\mathbb P^n$ called the
		\textbf{Fubini - Study metric}. In local coordinates on the open sets
		$U_i =  \{[x_1: ... : x_n] | x_i \neq 0\}$ with maps $[x_1, ... ,
		x_n]\mapsto \left(\frac{x_1}{x_i},\dots, \hat z_i, \dots, \frac{x_n}{x_i}\right)$ we have 
		\[\omega_i\coloneq \frac
		i{2\pi}\partial\bar\partial\log\left(\sum_{k=1}^n\left|\frac{z_l}{z_i}\right|^2\right)\]
		This glues togehter to a global form as
		\[\log\left(\sum_{l=1}^n\left|\frac{z_l}{z_i}\right|^2\right) = 
		\log\left(\sum_{l=1}^n\left|\frac{z_j}{z_i}\right|^2\left|\frac{z_l}{z_j}\right|^2\right) = 
		\log\left(\left|\frac{z_j}{z_i}\right|^2\right)+\log\left(\sum_{l=1}^n\left|\frac{z_l}{z_j}\right|^2\right)\]
		 But the left term vanishes under $\partial\bar\partial$ by a result
		 from complex analysis, so the expression is well defined between charts
		 and we can glue together to get a function on the whole of $\mathbb
		 P^n$. Note that $\overline{\partial\bar\partial} = \bar\partial\partial
		 = - \partial\bar\partial$, together with $\bar i = -i$, yields that
		 $\bar\omega = \omega$. Finally, a straightforward computation in coordinates shows that
		 $\omega$ is positive-definite. 
\end{remark}
\begin{lemma}\label{pos}
		Let $\tilde M$ be the blow up of $M$ at $h$ points, $n_1, ..., n_h$ be
		positive integers, $E_i$ the
		exceptional divisors of these points and $\pi$ be the
		associated projection map. For any positive line bundle $L$, and any
		line bundle $K$, there
		exists an integer $k\gg 0$ such that $\pi^*(L^{\otimes k}\otimes K)\otimes
		\left[\sum -n_i E_i\right]$ is positive.
\end{lemma}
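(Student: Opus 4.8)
The plan is to pass from line bundles to curvature forms and exhibit a metric whose curvature is a positive $(1,1)$-form. By the additivity of curvature under tensor products (Lemma \ref{add}) and the fact that the curvature of a pullback bundle, equipped with the pulled-back metric, is the pullback of the curvature, I write $\omega_L = \frac{i}{2\pi}\Theta_L$, $\omega_K = \frac{i}{2\pi}\Theta_K$, and $\eta_i = \frac{i}{2\pi}\Theta_{[-E_i]}$ for metrics still to be chosen on the $[-E_i]$. Then $\pi^*(L^{\otimes k}\otimes K)\otimes[\sum -n_iE_i]$ is positive exactly when the form
\[\Omega_k = k\,\pi^*\omega_L + \pi^*\omega_K + \sum_i n_i\eta_i\]
is positive-definite on $\tilde M$. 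Since $L$ is positive, $\omega_L$ is a positive-definite form on $M$; as $\pi$ is a biholomorphism away from $E = \bigcup E_i$ and collapses each $E_i$ to a point, $\pi^*\omega_L$ is positive-definite on $\tilde M\setminus E$ and, at a point $q\in E_i$, positive semi-definite with kernel exactly $T_qE_i$.

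First I would fix the metrics near the exceptional divisors. By Remark \ref{dude} the transition functions of $[E_i]$ depend only on the $\mathbb{P}^{n-1}$-coordinate, so on a neighborhood $\tilde U_i$ of $E_i$ we have $[-E_i] = \sigma_i^*\mathcal{O}_{\mathbb{P}^{n-1}}(1)$, where $\sigma_i\colon \tilde U_i \to E_i \cong \mathbb{P}^{n-1}$ is the bundle projection restricting to the identity on $E_i$. I pull back the Fubini--Study metric so that near $E_i$ the curvature is $\eta_i = \sigma_i^*\omega_{\mathrm{FS}}$, and extend to a global metric on $[-E_i]$ by a partition of unity; for $j\neq i$ I give $[-E_j]$ a metric that is flat near $E_i$, so that near $E_i$ only the $n_i\eta_i$ term survives in $\sum_j n_j\eta_j$. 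Because $\sigma_i$ is a submersion with vertical fibres the normal directions to $E_i$, the form $\eta_i = \sigma_i^*\omega_{\mathrm{FS}}$ is positive semi-definite with kernel exactly the normal direction, and it is positive-definite when restricted to $TE_i$.

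The heart of the argument is local positivity along $E$. At $q\in E_i$ the form $\pi^*\omega_L$ is positive semi-definite with kernel $T_qE_i$, while $n_i\eta_i$ is positive semi-definite with kernel the normal line $N_q$; since $T_qE_i\cap N_q = 0$, the elementary fact that a sum of two positive semi-definite Hermitian forms whose kernels meet only in zero is positive-definite gives $\pi^*\omega_L + n_i\eta_i > 0$ at $q$. By compactness of $E$ and openness of positivity, $\pi^*\omega_L + \sum_i n_i\eta_i > 0$ on a neighborhood $W = \bigcup W_i$ of $E$. To absorb the uncontrolled term $\pi^*\omega_K$, I use that on a compact neighborhood of each $p_i$ in $M$ the positive-definiteness of $\omega_L$ forces $\omega_K \geq -C\,\omega_L$ for some constant $C$, hence $\pi^*\omega_K \geq -C\,\pi^*\omega_L$ on $W$ (shrinking $W$ if needed). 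Writing $\Omega_k = (\pi^*\omega_L + \sum_i n_i\eta_i) + (k-1)\pi^*\omega_L + \pi^*\omega_K$ then shows $\Omega_k > 0$ on $W$ whenever $k \geq 1 + C$, since the first bracket is positive-definite and the remaining terms are $\geq (k-1-C)\pi^*\omega_L \geq 0$.

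Finally, on the compact complement $\tilde M\setminus W$ the form $\pi^*\omega_L$ is positive-definite and hence bounded below by a positive multiple of a fixed reference metric, while $\pi^*\omega_K + \sum_i n_i\eta_i$ is bounded; so $\Omega_k = k\,\pi^*\omega_L + (\pi^*\omega_K + \sum_i n_i\eta_i)$ is positive there once $k$ is large. Choosing $k$ large enough for both regions makes $\Omega_k$ positive-definite on all of $\tilde M$, which proves the lemma. I expect the main obstacle to be the construction near $E$: identifying $[-E_i]$ with $\sigma_i^*\mathcal{O}(1)$ so that its curvature is positive precisely in the directions along which $\pi^*\omega_L$ degenerates, and checking the complementary-kernel positivity. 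Everything else is a compactness-and-domination argument in which the factor $k$ plays the routine role of overwhelming the bounded contributions of $K$.
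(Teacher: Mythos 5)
Your proof is correct, and it follows the same overall strategy as the paper's: pull the Fubini--Study metric on the hyperplane bundle back to $[-E_i]$ via the identification in Remark \ref{dude}, glue with a partition of unity, invoke additivity of curvature (Lemma \ref{add}), and let a large power of $L$ dominate everything else. Where you genuinely differ---and improve on the paper---is at the one delicate point. The paper asserts that pulling back along ``the holomorphic immersion $\pi$'' preserves positivity, so that $-i\,\pi^*(k\alpha+\beta)(v,\bar v)>0$ everywhere, and then adds the merely semi-positive exceptional contribution. But $\pi$ is not an immersion: at a point $q\in E_i$ the differential $d\pi_q$ annihilates $T_qE_i$, so $\pi^*(k\alpha+\beta)$ is only semi-positive there, degenerating exactly along $T_qE_i$; as written, the paper's final display gives only $\geq 0$ in those directions, and strict positivity along $E$ must instead come from the Fubini--Study curvature of $[-E_i]$, which the paper records only as ``semi-positive.'' Your complementary-kernels argument supplies precisely the missing step: $\pi^*\omega_L$ has kernel $T_qE_i$, $\eta_i=\sigma_i^*\omega_{\text{FS}}$ has kernel the normal line, the two kernels meet only in zero, so the sum is positive definite at $q$; then compactness of $E$, openness of positivity, the domination $\omega_K\geq -C\,\omega_L$ near the blown-up points, and boundedness on the compact complement (which also absorbs the uncontrolled curvature in the partition-of-unity gluing region, another point the paper passes over) finish the argument for $k\gg 0$. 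So your write-up is not merely correct; it repairs a real gap in the paper's own proof, at the cost of a longer but more honest compactness-and-domination analysis.
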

	\begin{proof}
		The Fubini-Study metric induces a metric on the hyperplane bundle on
		$\mathbb P^{n-1}$; we pull this back\footnote{Although this pullback does depend on the choice of
		coordinates, the following discussion holds true for all of them; as
		such, we can simply pick our favorite and work in that without loss of
		generality} to $[-E_i]$ by Remark
		\ref{dude}. We
		can glue the $n_i$-th power of these metrics together by means of a
		partition of unity to obtain a form on 
		\[\left[\sum_{i}-n_iE_i\right]\]
		Where we let the function far away from each of the $E_i$ be zero. Then the curvature form
		$\omega = -n_ip^*\omega_{\text{FS}}$ (where $p$ the biholomorphism to projective space)
		satisfies $in_i\omega(v, \bar v)(x) \geq 0$ everywhere; the
		curvature form is \textbf{semi-positive}. But then if we let $\alpha$
		and $\beta$ be the curvature forms given by $L$ and $K$ respectively.
		Since $\alpha$ is positive, linearity gives that for large enough $k$ we
		have $-i(k\alpha + \beta)(v, \bar v) = -ik(\alpha)(v, \bar v) 
		-i\beta(v, \bar v)>0$. Pulling back these forms along the holomorphic
		immersion $\pi$ preserves positivity, so
		linearity again yields that 
		\[-i(\pi^*(k\alpha + \beta) + \omega)(v, \bar v) = -ik(\pi^*\alpha)(v, \bar v) 
		-i\pi^*\beta(v, \bar v)  -i\omega(v, \bar v) >0\]
		But the form $(\pi^*(k\alpha + \beta) +\omega)$ is, by Lemma \ref{add},
		the curvature form on \[\pi^*(L^{\otimes k}\otimes K)\otimes
		\left[\sum -n_i E_i\right],\] and we are done.
	\end{proof}
\section{The Kodaira Embedding Theorem}
Our goal in this section is to prove the main result of the paper:
\begin{theorem}[Kodaira Embedding Theorem]
	Let $M$ be a compact, complex manifold, and $L$ a positive line bundle on
	$M$. Then $M$ is projective. In particular, there is a $k\gg0$ such that
	$L^{\otimes k}$ determines an embedding $\iota$ of $M$ into projective space.
\end{theorem}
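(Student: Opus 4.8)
The plan is to use the sections of a high tensor power $L^{\otimes k}$ to build the embedding. Fix a basis $s_0,\dots,s_N$ of $H^0(M,\mathcal O(L^{\otimes k}))$ and consider the map $\iota_k\colon M\dashrightarrow \mathbb P^N$ sending $p\mapsto[s_0(p):\cdots:s_N(p)]$. For $\iota_k$ to be a well-defined embedding it suffices to verify three conditions: that $L^{\otimes k}$ is base-point free, so that $\iota_k$ is everywhere defined; that its sections separate points, so that $\iota_k$ is injective; and that they separate tangent vectors, so that $\iota_k$ is an immersion. Since $M$ is compact, an injective immersion into $\mathbb P^N$ is proper and hence automatically an embedding. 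Before starting I note two structural facts: a manifold carrying a positive line bundle is automatically K\"ahler, since the curvature $\tfrac{i}{2\pi}\Theta$ is a closed positive $(1,1)$-form (closed by the Cartan structure equation together with Remark \ref{invariant}), and by Lemma \ref{pos} every blow-up of $M$ at finitely many points again carries a positive line bundle and is therefore K\"ahler too. Hence the Kodaira--Nakano Vanishing Theorem is available on $M$ and on all the blow-ups below.

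To separate two distinct points $p\neq q$, I would blow them up to obtain $\pi\colon\tilde M\to M$ with exceptional divisors $E_p,E_q$, write $\tilde L=\pi^*L$, and examine the structure sequence of $E_p\sqcup E_q$ tensored with $\mathcal O(\tilde L^{\otimes k})$,
\[0\to \mathcal O\!\left(\tilde L^{\otimes k}\otimes[-E_p-E_q]\right)\to \mathcal O(\tilde L^{\otimes k})\to \mathcal O(\tilde L^{\otimes k})\big|_{E_p\sqcup E_q}\to 0.\]
Since $\tilde L^{\otimes k}=\pi^*L^{\otimes k}$ is trivial on each fibre $E_\bullet\cong\mathbb P^{n-1}$, its restriction there has $H^0=\mathbb C$, so the connecting long exact sequence shows that the evaluation $H^0(\tilde M,\tilde L^{\otimes k})\to\mathbb C\oplus\mathbb C$ is surjective provided $H^1(\tilde M,\tilde L^{\otimes k}\otimes[-E_p-E_q])=0$. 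Because $\pi_*\mathcal O(\tilde L^{\otimes k})=\mathcal O(L^{\otimes k})$, this surjectivity is exactly the statement that sections of $L^{\otimes k}$ attain independent values at $p$ and $q$, which both separates $p$ from $q$ and furnishes base-point freeness. Everything thus reduces to the vanishing of that $H^1$.

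To separate tangent directions at a point $p$, I would blow up only $p$, set $E=E_p$, and use the structure sequence for $E$ tensored with $\mathcal O(\tilde L^{\otimes k}\otimes[-E])$,
\[0\to \mathcal O\!\left(\tilde L^{\otimes k}\otimes[-2E]\right)\to \mathcal O\!\left(\tilde L^{\otimes k}\otimes[-E]\right)\to \mathcal O\!\left(\tilde L^{\otimes k}\otimes[-E]\right)\big|_{E}\to 0.\]
By Remark \ref{dude} the restriction $[-E]\big|_E$ is the hyperplane bundle on $E\cong\mathbb P^{n-1}$, so $H^0$ of the right-hand term is the space of linear forms, canonically $T^*_pM$; moreover the commutative diagram of Figure \ref{fig:dig} identifies this restriction map with the differential $d_p$. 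Global sections of the middle term are sections of $L^{\otimes k}$ vanishing at $p$, so surjectivity of the restriction means precisely that such sections realize every cotangent vector as a differential, i.e.\ that $\iota_k$ is an immersion at $p$. This holds as soon as $H^1(\tilde M,\tilde L^{\otimes k}\otimes[-2E])=0$.

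Both required vanishings follow from Kodaira--Nakano once the relevant bundles are exhibited as $K_{\tilde M}$ twisted by a positive bundle, since $H^1(\tilde M,K_{\tilde M}\otimes B)=H^1(\tilde M,\Omega^n(B))=0$ for $B$ positive (here $n+1>n$). Using Lemma \ref{pullback}, extended to several points so that $K_{\tilde M}^{-1}=\pi^*K_M^{-1}\otimes[-(n-1)(E_p+E_q)]$, one computes
\[\tilde L^{\otimes k}\otimes[-E_p-E_q]=K_{\tilde M}\otimes \pi^*\!\left(L^{\otimes k}\otimes K_M^{-1}\right)\otimes[-nE_p-nE_q]\]
for the point-separation bundle, and likewise $\tilde L^{\otimes k}\otimes[-2E]=K_{\tilde M}\otimes\pi^*(L^{\otimes k}\otimes K_M^{-1})\otimes[-(n+1)E]$ for the tangent case. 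Applying Lemma \ref{pos} with $K=K_M^{-1}$ and $n_i\in\{n,n+1\}$ makes each twisting factor positive for all sufficiently large $k$, so both $H^1$ groups vanish. The main obstacle is the final uniformity step: Lemma \ref{pos} yields a threshold $k$ that a priori depends on the chosen point or pair, whereas the embedding requires a single $k$ valid on all of $M$ at once. I would resolve this by openness and compactness. Each condition is open: nonvanishing of an evaluation, local injectivity, and maximality of the rank of a differential are all open conditions. For the immersion and base-point-free properties a finite subcover of $M$ therefore yields a common $k$; for point separation the non-compact locus $\{p\neq q\}$ is handled by first taking $k$ large enough that $\iota_k$ is already an immersion, hence injective on some neighborhood $W$ of the diagonal, and then running the two-point argument over the compact complement $(M\times M)\setminus W$. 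Taking the maximum of the finitely many thresholds gives one $k$ for which $\iota_k$ is a base-point-free, injective immersion, hence an embedding.
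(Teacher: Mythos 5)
Your proposal is correct and follows essentially the same route as the paper: blowing up at one or two points, the structure sequences of Lemma \ref{ses}, the canonical bundle computation of Lemma \ref{pullback}, positivity of the twisted bundles via Lemma \ref{pos}, and the Kodaira--Nakano Vanishing Theorem to kill the obstructing $H^1$. If anything, your final uniformity step is more careful than the paper's: by working on $(M\times M)$ minus a neighborhood of the diagonal you genuinely address injectivity for pairs of distant points, whereas the paper's concluding paragraph only takes a finite subcover of $M$ itself, which by itself yields local, not global, injectivity.
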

Our primary tool for proving the theorem will be 
\begin{lemma}\label{main}
	A holomorphic line bundle $L$ on $M$ with no base points (no points where every
	section vanishes) and a choice of basis for $\mathcal O({L},M)$ taken
	together yield a smooth map $\iota:M\to \mathbb P^N$, where $N = \dim(
	\mathcal O(L ,M)) - 1$\footnote{Recall that $\mathcal O(L, M)\cong\check H^0(M,
	\mathcal O(L))\cong H_{dR}^0(M, \mathcal O(L))\cong H^{(0, 0)}(M, \mathcal
	O_L))\cong \mathcal H^{0,0}(M, \mathcal O(L))$ is finite dimensional.}.
\end{lemma}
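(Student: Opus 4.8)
The plan is to build the map $\iota$ explicitly from the sections and then argue that the result is independent of the coordinate choices made along the way. Let $s_0, \dots, s_N$ be a basis for the finite-dimensional space $\mathcal O(L, M)$ of global holomorphic sections. For each point $p \in M$, the hypothesis that $L$ has no base points guarantees that at least one $s_j(p) \neq 0$, so the tuple $(s_0(p), \dots, s_N(p))$ is a nonzero element of the fiber $L_p$. The obstruction to reading this tuple as a point of $\mathbb P^N$ is that each $s_j(p)$ lives in the one-dimensional fiber $L_p$ rather than in $\mathbb C$; I would resolve this by working in a local holomorphic trivialization $\phi_\alpha$ of $L$ over an open set $U_\alpha$, which converts each section into a scalar-valued holomorphic function $f_j^\alpha := \phi_\alpha \circ s_j$. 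The map is then defined on $U_\alpha$ by
\[
\iota(p) = [f_0^\alpha(p) : \cdots : f_N^\alpha(p)].
\]

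**Well-definedness and smoothness.**

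First I would verify that $\iota$ does not depend on the trivialization. Passing from $\phi_\alpha$ to $\phi_\beta$ multiplies every $f_j^\alpha$ by the single nonvanishing transition function $g_{\alpha\beta}$, uniformly in $j$; since projective coordinates are defined only up to a common nonzero scalar, the point $[f_0^\beta : \cdots : f_N^\beta]$ equals $[f_0^\alpha : \cdots : f_N^\alpha]$, so $\iota$ is globally well defined. Next, since some $f_j^\alpha(p) \neq 0$ at every $p$, the image lands in the affine chart $\{x_j \neq 0\}$ near $p$, and in that chart $\iota$ is given by the ratios $f_i^\alpha / f_j^\alpha$, which are holomorphic (hence smooth) because each $f_i^\alpha$ is holomorphic and the denominator is nonvanishing on a neighborhood of $p$. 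This establishes that $\iota$ is a smooth (indeed holomorphic) map $M \to \mathbb P^N$.

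**Independence of the basis and the main obstacle.**

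The remaining point is that $N = \dim \mathcal O(L, M) - 1$ is the correct target dimension, which is immediate once one fixes a basis; a change of basis is an invertible linear substitution of the $s_j$, which acts on $\mathbb P^N$ as a projective linear automorphism and so changes $\iota$ only by composing with an element of $\mathrm{PGL}_{N+1}(\mathbb C)$, leaving the geometric map well defined up to projective equivalence. I do not expect any genuine difficulty here: the statement claims only that the data yield a smooth \emph{map}, not an embedding, so I need not address injectivity or immersivity (those are precisely the content reserved for the full embedding theorem, via Lemma \ref{pos} and the blowup machinery). The one step requiring care is the bookkeeping with the transition functions: I must make sure the common factor $g_{\alpha\beta}$ genuinely cancels in homogeneous coordinates and that the base-point-free hypothesis is invoked at \emph{every} point to keep the homogeneous tuple nonzero, since a vanishing tuple would leave $\iota$ undefined there. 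That local-to-global compatibility check is the heart of the argument.
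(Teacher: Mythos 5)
Your proposal is correct and follows essentially the same approach as the paper: both define $\iota_L(p) = [s_0(p) : \cdots : s_N(p)]$ from a basis of sections and observe that the base-point-free hypothesis makes this well defined. The paper's proof is a one-line assertion of this construction, whereas you carefully verify the points it leaves implicit (cancellation of the transition function $g_{\alpha\beta}$ in homogeneous coordinates, holomorphy via ratios in an affine chart, and independence of basis up to $\mathrm{PGL}_{N+1}(\mathbb C)$), which is a welcome expansion rather than a different route.
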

\begin{proof}
	Let $L$ be a holomorphic line bundle and $\{s_i\}_{i=0}^N$ a basis for the
	$N+1$-dimensional space of sections of $L$. Then $\iota_L:p\mapsto [s_0,
	..., s_N]$ is a well-defined holomorphic map.
\end{proof}
\begin{lemma}\label{ses}
	If $\tilde M$ is $M$ blown up at two points $x$ and $y$, $E_x$ and $E_y$ are the
	exceptional divisors, and $E = E_x + E_y$, we have the following short exact
	sequence:
	\[0\to \mathcal O({L\otimes [-E]})\to \mathcal
	O({L})\xto {r_E}\mathcal O(L)|_E\to 0\]
	For any line bundle $L$ on $\tilde M$, 
	where $\mathcal O({L})|_E$ is the set of all sections of $L$
	on the single hypersurface of the divisor $E$ and $r'_E$ is the restriction to the divisor.
\end{lemma}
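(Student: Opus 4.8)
The plan is to realize this as the standard restriction sequence attached to the effective divisor $E = E_x + E_y$. The development of divisors and line bundles above shows that to $E$ there is associated both the line bundle $[E]$ and a global section $s_E \in \mathcal O([E])(\tilde M)$ whose divisor $(s_E)$ is exactly $E$; since $E$ is effective this section is genuinely holomorphic rather than merely meromorphic. I would take the left map to be multiplication by $s_E$: a local holomorphic section $t$ of $L \otimes [-E]$ is sent to $t \otimes s_E$, a section of $L \otimes [-E] \otimes [E] = L$ under the canonical trivialization $[-E]\otimes[E]\cong \mathcal O$ coming from the fact that $[\bullet]$ is a homomorphism into the Picard group. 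This is visibly a morphism of $\mathcal O$-modules.

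For injectivity, observe that $s_E$ vanishes precisely on $E$, a proper analytic subvariety of $\tilde M$. Hence on any connected open set the relation $t \otimes s_E \equiv 0$ forces $t \equiv 0$ off $E$, and the identity theorem for holomorphic sections then gives $t \equiv 0$ everywhere; so multiplication by $s_E$ is injective on every stalk.

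Next I would let $r_E$ be honest restriction of a section to $E$, landing in the sheaf $\mathcal O(L)|_E$ of sections of $L$ along $E$. Exactness in the middle is the crux: I must show the image of multiplication by $s_E$ equals $\Ker(r_E)$, i.e. that a section of $L$ restricting to $0$ on $E$ is divisible by $s_E$ with holomorphic quotient. This is local, so I would work in a chart in which $E$ is cut out by a single coordinate $z_1$ (each exceptional divisor is a smooth hypersurface) and $L$ is trivialized; there a section is a single holomorphic function $f$, the section $s_E$ is $z_1$ up to a nonvanishing factor, and $f|_{\{z_1=0\}} = 0$ holds if and only if $f = z_1 g$ for $g$ holomorphic — the standard Taylor-division statement in the one variable $z_1$ with holomorphic dependence on the remaining coordinates. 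It is essential here that each $E_i$ enters $E$ with multiplicity one, so that reduced restriction, and not some higher jet, is the correct cokernel.

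Finally, surjectivity of $r_E$ is again local: in the same chart a section of $L$ along $E$ is a holomorphic function on $\{z_1 = 0\}$, which extends to the chart by declaring it independent of $z_1$, so $r_E$ is surjective on stalks and hence as a sheaf map. The main obstacle is the middle exactness, and specifically the division lemma; the one hypothesis that keeps it clean is that the exceptional divisors are smooth and appear reduced, since if their coefficients exceeded one the cokernel would have to record higher-order data along $E$ rather than plain restriction.
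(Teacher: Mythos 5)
Your proposal is essentially the paper's own argument: the paper's map $\sigma_{\alpha i\beta j}\mapsto f_{ij}\sigma_{\alpha i\beta j}$ is exactly multiplication by the canonical section $s_E$ of $[E]$ written in local trivializations, and the cokernel is identified with restriction to $E$ in both cases. The only difference is that you supply the details the paper waves at with ``clearly'' --- injectivity via the identity theorem, middle exactness via the division lemma $f|_{\{z_1=0\}}=0 \iff f=z_1g$, and stalk-level surjectivity by extending sections independently of $z_1$ --- all of which are correct and strengthen the argument.
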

\begin{proof}
	The restriction map is clearly surjective. If we suppose $\{U_{\alpha}\}$ is
	a trivialization for $L$ with transition functions $g_{\alpha\beta}$, and
	$\{U_{i}, f_{ij}\}$ the trivialization and transition functions described in
	Remark \ref{dude}, we have that $L\otimes [-E]$ is given by transition
	functions $\frac{g_{\alpha\beta}}{f_{ij}}$ where $(f_{ij}) = [E]$. We can
	then write a section of $L\otimes [-E]$ as a collection of functions
	$\sigma_{\alpha i \beta j}: U_{\alpha}\cap U_i\cap U_\beta\cap U_j\to
	\mathbb C$ satisfying the appropriate transition functions. As such,
	we consider the function $\phi:\mathcal O(L\otimes [-E])\to \mathcal
	O(L)$
	taking $\sigma = \{\sigma_{\alpha i \beta j}\} \mapsto \{f_{ij}\sigma_{\alpha i
	\beta j}\}$. Clearly this is injective; however, it's image will be the set
	of sections of $L$ which are zero on $E$. But this is exactly the
	kernel of $r_E$, and we are done.
\end{proof}
	\begin{corollary}
	If $\tilde M$ is the blowup of $M$ at a point $x$ and $E$ the exceptional
		divisor, we have a short exact
	sequence for any line bundle $L$ on $M$:
		\[0\to \mathcal O({\tilde L^{\otimes k}\otimes [-2E]}) \to \mathcal O({\tilde
		L^{\otimes k}\otimes [-E]})\xto{r'_E}\mathcal O({\tilde L^{\otimes k}\otimes
		[-E]})|_E\to 0\]
	Where $\mathcal O({\tilde L^{\otimes k}\otimes
		[-E]})|_E$ is the set of all sections of $\tilde L^{\otimes k}\otimes [-E]$
	on the two hypersurfaces of the divisor $E$, $\tilde L = \pi^*L$ is the pullback of $L$, and $r'_E$ is
		as the restriction to the divisor.
\end{corollary}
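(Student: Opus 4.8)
The plan is to read this off directly from Lemma \ref{ses}. First I would note that, although Lemma \ref{ses} is phrased for the blowup at two points, its proof never actually invokes that fact: the construction is entirely local near the exceptional divisor, using only the trivialization $\{U_i, f_{ij}\}$ and transition data of Remark \ref{dude}. Thus for the single-point blowup $\tilde M$ the identical argument gives, for \emph{any} line bundle $L'$ on $\tilde M$, a short exact sequence
\[0\to \mathcal O(L'\otimes [-E])\to \mathcal O(L')\xto{r'_E}\mathcal O(L')|_E\to 0,\]
where $r'_E$ is restriction to $E$ and the left-hand map is multiplication by a local defining section $f_{ij}$ of $[E]$. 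Surjectivity of $r'_E$ is immediate, and the image of the left map is exactly the subsheaf of sections vanishing on $E$, which is $\ker r'_E$.

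Next I would specialize to $L' = \tilde L^{\otimes k}\otimes [-E]$. Substituting into the sequence above produces
\[0\to \mathcal O\big((\tilde L^{\otimes k}\otimes [-E])\otimes [-E]\big)\to \mathcal O(\tilde L^{\otimes k}\otimes [-E])\xto{r'_E}\mathcal O(\tilde L^{\otimes k}\otimes [-E])|_E\to 0.\]
To finish I would only need to rewrite the leftmost sheaf. Because $[\bullet]$ is a group homomorphism into the Picard group, $[-E]\otimes[-E] = [-2E]$, so $(\tilde L^{\otimes k}\otimes [-E])\otimes [-E] = \tilde L^{\otimes k}\otimes [-2E]$, and the sequence becomes exactly the claimed one.

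I do not expect any genuine obstacle here: all of the mathematical content already lives in Lemma \ref{ses}, and the only things to check are that its (local) proof is insensitive to the number of points blown up and the elementary identity $[-E]\otimes[-E]=[-2E]$. The one point worth flagging is the phrase `the two hypersurfaces of the divisor $E$', which in this single-point setting should simply be read as the exceptional divisor $E\cong\mathbb P^{n-1}$ together with the first-order data recorded by $r'_E$; it appears to be inherited from the two-point statement of Lemma \ref{ses} and does not affect the argument.
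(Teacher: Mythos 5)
Your proposal is correct and takes essentially the same route as the paper, whose entire proof reads ``Follows from Lemma \ref{ses} by letting $L = \tilde L^{\otimes k}\otimes [-E]$.'' Your additional observations --- that the proof of Lemma \ref{ses} is local near the exceptional divisor and so is insensitive to the number of points blown up (resolving the mismatch between the lemma's two-point setting and the corollary's one-point setting, which the paper silently glosses over), and that $[-E]\otimes[-E]=[-2E]$ via the Picard group homomorphism --- are exactly the details needed to make that one-line substitution rigorous.
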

	\begin{proof}
		Follows from Lemma \ref{ses} by letting $ L = \tilde L^{\otimes
		k}\otimes [-E]$
	\end{proof}
\begin{theorem}[Kodaira Embedding Theorem]
	Let $M$ be a compact, Kähler manifold, and $L$ a positive line bundle on
	$K$. Then $M$ is projective. In particular, there is a $k\gg0$ such that
	$L^{\otimes k}$ determines an embedding $\iota$ of $M$ into projective space.
\end{theorem}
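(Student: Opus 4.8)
The plan is to build the embedding from Lemma \ref{main}: once we know that for some $k\gg 0$ the bundle $L^{\otimes k}$ is base-point free, a choice of basis $s_0,\dots,s_N$ of $\check H^0(M,\mathcal O(L^{\otimes k}))$ yields a holomorphic map $\iota\colon M\to\mathbb P^N$, and it then suffices to verify that $\iota$ is (i) defined everywhere (base-point free), (ii) injective (separates points), and (iii) an immersion (separates tangent directions). Because $M$ is compact, an injective holomorphic immersion is automatically a homeomorphism onto its image and hence a closed embedding, so these three properties give an embedding $M\hookrightarrow\mathbb P^N$, exhibiting $M$ as a projective manifold (its image being a closed analytic, hence by Chow algebraic, subvariety).

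The heart of the argument is to translate (i)--(iii) into surjectivity of restriction maps on blowups, and thence into $H^1$-vanishing. For (i) and (ii) I blow up a pair of distinct points $x\neq y$ to get $\pi\colon\tilde M\to M$ with exceptional divisor $E=E_x+E_y$ and apply Lemma \ref{ses} with $L$ replaced by $\tilde L^{\otimes k}=\pi^*L^{\otimes k}$; the associated long exact sequence contains $\check H^0(\tilde M,\mathcal O(\tilde L^{\otimes k}))\xrightarrow{r_E}\check H^0(E,\mathcal O(\tilde L^{\otimes k})|_E)\to\check H^1(\tilde M,\mathcal O(\tilde L^{\otimes k}\otimes[-E]))$. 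Since $\tilde L^{\otimes k}|_E$ is trivial on each $\mathbb P^{n-1}$ component of $E$, the middle term is $\mathbb C\oplus\mathbb C$, and surjectivity of $r_E$ produces a section nonzero at $x$ and vanishing at $y$ (and, hitting $(1,1)$, a nonvanishing section, giving (i)); the pullback $\check H^0(\tilde M,\mathcal O(\pi^*L^{\otimes k}))\cong\check H^0(M,\mathcal O(L^{\otimes k}))$ lets these descend to $M$. For (iii) I blow up a single point $x$ and use the Corollary to Lemma \ref{ses}; here $(\tilde L^{\otimes k}\otimes[-E])|_E$ is the hyperplane bundle $\mathcal O(1)$ on $E\cong\mathbb P^{n-1}$, whose sections are exactly $T^*_xM$, and the commutative diagram of Figure \ref{fig:dig} identifies $r_E$ with the map sending a section vanishing at $x$ to its differential $d_xs$. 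Surjectivity onto $T^*_xM$ is precisely the immersion condition, and it follows from vanishing of the connecting $\check H^1(\tilde M,\mathcal O(\tilde L^{\otimes k}\otimes[-2E]))$.

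To secure these vanishings I write each relevant bundle as $K_{\tilde M}\otimes F$ and invoke the Kodaira--Nakano Vanishing Theorem with $p=n$, $q=1$ (so $p+q=n+1>n$), using $\Omega^n(F)=\mathcal O(K_{\tilde M}\otimes F)$. By Lemma \ref{pullback}, $K_{\tilde M}=\pi^*K_M\otimes[(n-1)E]$ (summed over the blown-up points), so in the two-point case $\tilde L^{\otimes k}\otimes[-E]=K_{\tilde M}\otimes F$ with $F=\pi^*(L^{\otimes k}\otimes K_M^{-1})\otimes[-n(E_x+E_y)]$, and in the one-point case $\tilde L^{\otimes k}\otimes[-2E]=K_{\tilde M}\otimes F$ with $F=\pi^*(L^{\otimes k}\otimes K_M^{-1})\otimes[-(n+1)E]$. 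In both cases Lemma \ref{pos}, applied with the anticanonical bundle $K_M^{-1}$ in the role of the auxiliary line bundle and the stated multiplicities, produces a $k\gg0$ making $F$ positive; positivity of $F$ simultaneously endows $\tilde M$ with a Kähler form (its curvature $\tfrac{i}{2\pi}\Theta$, which is positive and $d$-closed), supplying the hypothesis needed to run Kodaira--Nakano on the compact manifold $\tilde M$. Hence the connecting $\check H^1$ vanishes and $r_E$ is surjective.

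The remaining and most delicate point is uniformity of $k$: the vanishings above yield, for each point $x$ and each pair $(x,y)$, an integer $k$ that a priori depends on that datum. I expect the main obstacle to be assembling these into a single $k$ valid for all points and all pairs at once. The standard remedy is that base-point freeness, separation of points, and the immersion condition are open conditions, so a $k$ working at a given point or pair works on a neighborhood, whereupon compactness of $M$ and of $M\times M$ extracts a uniform bound; one must check that the blowup and the auxiliary bundle $F$ vary appropriately as the blown-up points move, which is where the care lies. With a uniform $k$ in hand, $\iota_{L^{\otimes k}}$ is a well-defined injective holomorphic immersion of the compact manifold $M$, hence an embedding into $\mathbb P^N$, and $M$ is projective.
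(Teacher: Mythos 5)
Your proposal is correct and follows essentially the same route as the paper: Lemma \ref{main} for the map, blowups at one and two points with Lemma \ref{ses} and its Corollary to reduce separation of points and tangents to surjectivity of restriction maps, Lemmas \ref{pullback} and \ref{pos} to write the relevant bundles as $K_{\tilde M}\otimes F$ with $F$ positive, Kodaira--Nakano to kill the obstructing $\check H^1$, and openness plus compactness to extract a uniform $k$. Your version is, if anything, slightly cleaner in applying Lemma \ref{pos} directly with $K=K_M^{-1}$ and the multiplicities $n$ and $n+1$ (where the paper splits $k=(k-k')+k'$), and more explicit about invoking the vanishing theorem with $p=n$, $q=1$.
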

	\begin{proof}
		As we have a smooth map $\iota_L$ by Lemma \ref{main}, it suffices to show that the map separates
		points and tangents; that is, that the map is an injective immersion.
		We will first show injectivity. If injectivity fails, there exist points
		$x, y$ such that $\iota_L(x) = \iota_L(y)$. But by the definition of
		$\iota_L$, this means that there exists a scalar $\lambda$ such that for
		any section $s$, we have $\lambda s(x) = s(y)$. In particular, the image
		of the restriction / evaluation map $r_{x, y}:s\mapsto (s(x), s(y))\in
		L^{\otimes k}_{x}\oplus L_y^{\otimes k}$ is the set of all $(s(x),
		\lambda s(x))$ for $x\in M$; as $\lambda$ is fixed, this is not the
		whole set. By this argument, it suffices to show that $r_{x, y}$ is
		surjective. Let $\tilde M$ be the blowup of $M$ at two distinct points $x$ and $y$, with
	exceptional divisors $E_x$ and $E_y$. Then by definition and Hartogs'
		theorem we have the
		commutative diagram:

		\begin{figure}[h]
	\centerline{
	\xymatrix{
		\check H^0(\tilde M, \mathcal O({\tilde L^{\otimes k}}))
			\ar[r]^{r_E}&\check H^0(\mathcal O({\tilde L^{\otimes
		K}})|_E)\\
			\check H^0(M, \mathcal O({L^{\otimes k}}))\ar[u]\ar[r]^{r_{x, y}}&
			L_x^{\otimes k}\oplus L_y^{\otimes k}\ar@{=}[u]
	}}
	\end{figure}
	Where $r_E$ is the
	restriction map $r_E(f) = \sum n_i f|_{V_i}$ onto the divisor $E = E_x +
	E_y = \sum n_iV_i$, and $\tilde L$ is the pullback $\pi^*L$. By the diagram,
		it suffices to show that $r_E$ is surjective. But by Lemma \ref{ses} we have the short
		exact sequence
	\[0\to \mathcal O({\tilde L^{\otimes k}\otimes [-E]})\to \mathcal
	O({L^{\otimes K}})\xto {r_E}\mathcal O({\tilde L^{\otimes K}})|_E\to 0\]
	Which induces a long exact sequence on the level of cohomology:
		\[0\to \check H^0(\mathcal O({\tilde L^{\otimes k}\otimes [-E]}))\to
		\check H^0(\mathcal
		O({L^{\otimes K}}))\xto {r_E}\check H^0(\mathcal O({\tilde L^{\otimes
		K}})|_E)\]\[\to
		\check H^1(\mathcal O({\tilde L^{\otimes k}\otimes [-E]}))\to \dots\]
	So it suffices to show that 
		$\check H^1(\mathcal O({\tilde L^{\otimes k}\otimes [-E]})) =0$ for some
		$k$. 
		Let $\tilde K_M = \pi^*K_M$. By Lemma \ref{pullback} we have that $K_{\tilde M} = \tilde
	K_M\otimes[(n-1)E]$ (where $n = \dim_\mathbb C(M)$). Then 
	\begin{align*}\tilde L^{\otimes k} \otimes [-E] =&
		K_{\tilde M}\otimes \tilde L^{\otimes k}\otimes [-E]\otimes K^*_{\tilde
		M}\\
		=&K_{\tilde M}\otimes (L^{\otimes (k-k')}\otimes\tilde K^*_M)\otimes (\tilde
		L^{\otimes k'}\otimes [-nE])
	\end{align*}
	But by Lemma \ref{pos}, we can choose $k$ and $k'$ so that $(L^{\otimes (k-k')}\otimes\tilde K^*_M)\otimes (\tilde
		L^{\otimes k'}\otimes [-nE])$ is a positive line bundle, meaning we can apply
		the Kodaira Vanishing Theorem to show that the first cohomology
		vanishes. 

	 We now seek to show the differential of $\iota_E$ is injective everywhere;
		i.e. that $\iota_E$ separates tangents. 
			 By the rank-nullity theorem, it suffices to show that $d\iota$
				has the same rank as $T^*M$ at $x$ 
			 Let $\mathscr I_x$ denote the set of sections of $L$ vanishing
				at $x$, and 
			 suppose $s\in \mathscr I_x(M)$. If we have distinct
				trivializations $\psi_\alpha$ and $\psi_\beta$ for the line
				bundle $L$ about the point $x$, then we note
				that $\psi_\alpha^*s = g_{\alpha\beta} \psi_\beta^*s$ (here the
				star denotes the ``pushforward'' of a section of $\mathscr I_x$ on
				$M$ to a $\mathbb C$-valued function on $M$ by way of the
				trivialization $\psi$). Thus by
				the chain rule and the fact that $s$ vanishes at $x$:
				\begin{align*}
					\psi\circ d(\psi_\alpha^*s) = & \psi\circ d(g_{\alpha\beta}\psi_\beta^*s)
					=  d(g_{\alpha\beta})\psi_\beta^*s +
					d\psi_\beta^*sg_{\alpha\beta}
				= g_{\alpha\beta} d\psi_\beta^*s 
				\end{align*}
			 So we may construct a well defined map  
			 \[d_x:H^0(M, \mathscr
				I_L)\to T^*_{\partial, x}\otimes L_x\text{   by the rule   }
				d_x: s \mapsto \left[d(\psi_\alpha^*
				s)\right]_{x}\]. 
			 If this map is surjective, then for each $v\in T^*_xM$ we can
				find an $s$ with 
				\[ds = \sum_{i=1}^n\frac{\partial s}{\partial
				x_n}dx_n = v,\] where the $x_n$ are holomorphic coordinates. Thus
				the differential $d\iota$ has the same rank as $T^*M$. 

		To show this, let $\tilde M$ denote the
		blow-up of $M$ at $x$; let $E$ denote the exceptional divisor, and $\pi$
		the projection. The pullback map 
		\[\pi^*: \check H^0(M, \mathcal O(L^{\otimes k}))\to \check H^0(\tilde
		M, \mathcal O_{\tilde M}(\tilde L^{\otimes k}))\]
		is then an isomorphism. Moreover, this restricts to an isomorphism 
		\[\pi^*: \check H^0(M, \mathcal O(\mathscr I_x(L))\to \check H^0(\tilde
		M, \mathcal O_{\tilde M}(\tilde L^{\otimes k}\otimes [-E]))\]
		Because a section of $L$ vanishes on $x$ if and only if the pullback of
		that section vanishes along all of $E$; as we have discussed, sections
		which vanish along $E$ are sections of $[-E]$. But by identifying $\check
		H^0(\mathcal O(\tilde L^k-E)|_E)$ with $L_x^{\otimes k}\otimes
		T^*_{\partial, x}$ and referencing the diagram in Remark \ref{dude}
		(as tensoring preserves the commutivity), we obtain that the following
		diagram commutes:

	\begin{figure}[h]
		\centerline{
	\xymatrix{
		\check H^0(\tilde M, \mathcal O({\tilde L^{\otimes k}\otimes [-E]}))
			\ar[r]^{r'_E}&\check H^0(E, \mathcal O({\tilde L^{\otimes
			k}\otimes[-E]}))\\
			\check H^0(M, \mathcal O({\mathscr I^k_x}))\ar[u]\ar[r]^{d_x}&
			T^*_xM\otimes L_x^{\otimes k}\ar@{=}[u] }}
	\end{figure}

	On $\tilde M$ by the Corollary to Lemma \ref{ses} we have the exact sequence 
	\[0\to \mathcal O({\tilde L^{\otimes k}\otimes [-2E]}) \to O({\tilde
	L^{\otimes k}\otimes [-E]})\xto{r_E}\mathcal O({\tilde L^{\otimes k}\otimes
	[-E]})|_E\to 0\]
	By the snake lemma it suffices to show $H^1(M, \mathcal O({\tilde L^{\otimes k}\otimes
	[-E]})) = 0$.
	We can choose $k$ and $k'$ such that $L^{\otimes(k-k')}\otimes K^*_M$ is
	positive on $M$ and $\tilde K^{\otimes k'}\otimes[(-n-1)E]$ is positive on
	$\tilde M$. Pulling back the first yields that $\tilde L^{\otimes
	(k-k')}\otimes \tilde K^*_M$ is positive on $\tilde M$. But then we can write 
	\begin{align*}\tilde L^{\otimes k}\otimes [-2E] =& K_{\tilde M}\otimes (\tilde
	L^{\otimes(k-k')}\otimes K^*_{\tilde M}\otimes \tilde L^{\otimes k'})\\
	=& K_{\tilde M}\otimes (\tilde
	L^{\otimes(k-k')}\otimes \tilde K^*_{M}\otimes \tilde L^{\otimes k'}\otimes
		[(-n-1)E])
	\end{align*}
	With $(\tilde
	L^{\otimes(k-k')}\otimes \tilde K^*_{M}\otimes \tilde L^{\otimes k'}\otimes
		[(-n-1)E])$ positive, and so we apply the Koidara Vanishing Theorem to
		show the first cohomology vanishes. 

	Picking the maximum of the $k$s found in both of these steps, we have shown
	that at each point there exists a $k$ such that the map
	associated to $L^{\otimes k}$ separates both points and tangents. But as the
	set on which any given function is an embedding is open, there is a
	neighborhood $U_x$ around each point $x$ for which $\iota_{L^{\otimes k_x}}$
	is an embedding. This forms an open cover of $M$, as each point $x$ is in
	$U_x$. But as $M$ is compact, we can find a finite sub-cover of this open cover,
	and with it, a finite number of $k$s which suffice to ensure $\iota$ is
	positive. Picking the maximum of these $k$s then yields a single $k$ for
	which $\iota_{L^{\otimes k}}$ is an embedding.
	\end{proof}
%\begin{corollary}
%	Let $M$ be a compact holomorphic manifold. If $M$ admits a closed, positive
%	$(1, 1)$-form $\omega$ who's cohomology class $[\omega]$ is rational, then
%	$M$ is projective.
%\end{corollary}
%	\begin{proof}
%		% TODO
%	\end{proof}
%
\nocite{*}
\printbibliography

@book{Lee, author = {Lee, John M}, publisher = {Springer}, title = {Introduction to smooth manifolds}, year = {2002}}

@book{Huy, author = {Huybrechts, Daniel}, publisher = {Springer-Verlag Berlin Heidelberg}, title = {Complex Geometry}, year = {2005} }

@book{GrHa, author = {Griffiths, Phillip and Harris, Joseph}, month = {08}, publisher = {John Wiley \& Sons}, title = {Principles of Algebraic Geometry}, year = {2014} }

@book{Wei, title={An Introduction to Homological Algebra}, ISBN={9781139643078}, publisher={Cambridge University Press}, author={Weibel, Charles A}, year={1995}, month={Oct} }

@misc{stacks, title={The Stacks Project}, url={https://stacks.math.columbia.edu/}, journal={stacks.math.columbia.edu}, year={2018}, month={May} }

\end{document}